\documentclass[11pt]{amsart}
\usepackage{amssymb, amsfonts, amscd, enumerate}

\usepackage{amsmath}
\usepackage{graphicx, enumerate}

\usepackage{verbatim}
\usepackage{amscd}
\usepackage{xypic}
\usepackage{amsmath, amssymb}
\usepackage{pstricks}
\usepackage{pst-plot}

\textwidth17cm 
\textheight23.5cm
\addtolength{\evensidemargin}{-2.4cm}
\addtolength{\oddsidemargin}{-2.4cm}
\addtolength{\topmargin}{-.2in}%{-.8in}
%\addtolength{\bottommargin}{-0.5in}
\addtolength{\itemindent}{-.9in}
%\def\baselinestretch{.83}

%\pagestyle{empty}
%\textwidth16.5cm
%\textheight23cm
%\addtolength{\evensidemargin}{-2.4cm}
%\addtolength{\oddsidemargin}{-2.4cm}
%\addtolength{\topmargin}{in}%{-.8in}
%\addtolength{\bottommargin}{-0.5in}
%\addtolength{\itemindent}{-.9in}
%\def\baselinestretch{.83}
%\pagestyle{empty}
%\textwidth=1.2\textwidth

\theoremstyle{plain}
\newtheorem{theorem}{Theorem}[section]
\newtheorem{lemma}[theorem]{Lemma}
\newtheorem{proposition}[theorem]{Proposition}
\newtheorem{corollary}[theorem]{Corollary}

\theoremstyle{definition}
\newtheorem{definition}[theorem]{Definition}
\newtheorem{remark}[theorem]{Remark}
\newtheorem{remarks}[theorem]{Remarks}

\numberwithin{equation}{section}

\newcommand{\et}{\operatorname{et}}
\newcommand{\gr}{\operatorname{gr}}
\newcommand{\id}{\operatorname{id}}

\newcommand{\st}{\operatorname{s}}
\newcommand{\sst}{\operatorname{ss}}

\newcommand{\bfd}{{\bf d}}
\newcommand{\bfe}{{\bf e}}
\newcommand{\bfdim}{{\bf dim}}
\newcommand{\bfi}{{\bf i}}
\newcommand{\bfI}{{\bf I}}

\newcommand{\Aut}{\operatorname{Aut}}
\newcommand{\End}{\operatorname{End}}
\newcommand{\GL}{\operatorname{GL}}
\newcommand{\Gr}{\operatorname{Gr}}
\newcommand{\Hom}{\operatorname{Hom}}
\newcommand{\Ker}{\operatorname{Ker}}
\newcommand{\Ima}{\operatorname{Im}}

\newcommand{\Proj}{\operatorname{Proj}}
\newcommand{\PGL}{\operatorname{PGL}}

\newcommand{\Spec}{\operatorname{Spec}}
\newcommand{\Tr}{\operatorname{Tr}}

\newcommand{\bF}{\mathbb F}
\newcommand{\bG}{\mathbb G}
\newcommand{\bN}{\mathbb N}
\newcommand{\bP}{\mathbb P}
\newcommand{\bQ}{\mathbb Q}

\newcommand{\bZ}{\mathbb Z}

\newcommand{\cF}{\mathcal F}

\newcommand{\cM}{\mathcal M}

\newcommand{\cO}{\mathcal O}

\numberwithin{equation}{subsection}
\newcommand{\be}
  {\protect\setcounter{equation}{\value{subsubsection}}}  
  \newcommand{\ee}%
   {\protect\setcounter{subsubsection}{\value{equation}}}

  {\protect\setcounter{subsubsection}{\value{equation}}}

%    Absolute value notation

%    Blank box placeholder for figures (to avoid requiring any
%    particular graphics capabilities for printing this document).

%%%%%%%%%%%%%%%%%Macros%%%%%%%%%%%%%%%%%%%%%%
%%%%%%%%%A%%%%%%%%%

%\define \undA{\underline A}

%%%%%%%%%End A%%%%%

%%%%%%%%%B%%%%%

%\define \undB{\underline B}
%%%%%End B%%%%%%%%%

%%%%%%%%C%%%%%%%%%%

%\def\C{{\bold C}}

\def \C{\mathcal C}

\def \cL{\mathcal L}

\def \colimn{\underset {n \rightarrow \infty}  {\hbox {lim}}}

\def \colimK.{\underset {\underset K^.  \rightarrow}  {\hbox {lim}}}

\def \colimU.{\underset {\underset U_.  \rightarrow}  {\hbox {lim}}}

%%%%%%%%%End C%%%%%%%

%%%%%%%%%%D%%%%%%%%%

%%%%%%%%End D%%%%%%%%

%%%%%%%%E%%%%%%%%%%%%

\def \EGx{EG{\underset G  \times}}

\def \EG1{E{(G \times {\mathbb C}^*)}{\underset {G\times {\mathbb C}^*}  \times}}

\def \EZ(s)1{E{(Z(s) \times {\mathbb C}^*)}{\underset {(Z(s)\times {\mathbb C}^*)}  \times}}

\def \EM(u){EM(u){\underset {M(u)}  \times}}
\def \EM(us){EM(u,s){\underset {M(u, s)}  \times}}

%%%%%%%%%%%End E%%%%%%%%%

%%%%%%%%%%%F%%%%%%%%%%%%%%

%\def \F{\frak F}

%%%%%%%%%End F%%%%%%%%%%%%

%%%%%%%%%%%G%%%%%%%%%%%%%%

%%%%%%%%%%%%End G%%%%%%%%

%%%%%%%%%%H%%%%%%%%%%%%%%

%%%%%%%%%End H%%%%%%%%%%

%%%%%%%%%%I%%%%%%%%%%%%%

\def \invlim1{\underset {\infty \leftarrow q}  {\hbox {lim}}^1}

%%%%%%%%%%End I%%%%%%%%

%%%%%%%%%%%J%%%%%%%%%%%

%%%%%%%%%%End J%%%%%%%%%

%%%%%%%%%%%K%%%%%%%%%%%%

%%%%%%%%%%%End K%%%%%%%%

%%%%%%%%%%%%%L%%%%%%%%%%

\def \L3{\Lambda \times \Lambda \times \Lambda}
\def \L2{\Lambda \times \Lambda}

\def \longright2arrow{{\overset \longrightarrow  {\overset {}  \longrightarrow}}}

\def \leq{\le}

%%%%%%%%%%%End L%%%%%%%

%%%%%%%%%%%M%%%%%%%%%%%

%%%%%%%%%End M%%%%%%%%%

%%%%%%%%%N%%%%%%%%%%%%%

%%%%%%%%%%End N%%%%%%%%

%%%%%%%%%%%0%%%%%%%%%%%

\def \O{{\mathcal O}}

%%%%%%%%%%%End O%%%%%%%

%%%%%%%%%%%P%%%%%%%%%%%

%%%%%%%%%%%End P%%%%%%%

%%%%%%%%%%%Q%%%%%%%%%%%

\def \Q{\mathbb Q}

%%%%%%%%%%End Q%%%%%%%%

%%%%%%%%%%R%%%%%%%%%%%%
\def \ra{\rightarrow}
\def \Ra{\Rightarrow}
\def \RG^{R(G)^{\hat {}}\ }
\def \resp{respectively}
\def \res{respectively}

%%%%%%%%%%End R%%%%%%%

%%%%%%%%%%%S%%%%%%%%%%

%%%%%%%%%%End S%%%%%%

%%%%%%%%%%%%%T%%%%%%%

\def \topGcoh*{^{top, *} _{G}}
\def \topGho*{ _{top,*} ^{G}}

%%%%%%%%%%%End T%%%%%

%%%%%%%%%U%%%%%%%%%%%

%\def \undS{\underline S}

%\def \undA{\underline A}

%%%%%%%%%End U%%%%%%

%%%%%%%%%%V%%%%%%%%%

%%%%%%%%%End V%%%%%%

%%%%%%%%%%W%%%%%%%%%

%%%%%%%%%End W%%%%%%

%%%%%%%%%%X%%%%%%%%%

%%%%%%%%%%End X%%%%%%%

%%%%%%%%%Y%%%%%%%%%%%%

%%%%%%%%End Y%%%%%%%%%

%%%%%%%%%%%Z%%%%%%%%%%
\def \Z(s){Z(s) \times {\mathbb C}^*}
\def \Z{\mathbb Z}

%%%%%%%%%%%%%%%%%%%%%End Macros %%%%%%%%%%%%%

\author{M.~Brion and R.~Joshua}
\address{Michel Brion\\
Universit\'e de Grenoble--I\\
Institut Fourier, BP 74\\ 
38402 Saint-Martin d'H\`eres Cedex\\
France}
\email{Michel.Brion@ujf-grenoble.fr}

\address{Roy Joshua\\ 
Department of Mathematics\\  
Ohio State University\\
231 W 18th Avenue\\
Columbus, OH 43210\\
USA}
%\curraddr{Max Planck Institute for Mathematics, \\
%Vivatsgasse 7, 53111 Bonn, GERMANY}
\email{joshua@math.ohio-state.edu}

\begin{document}

\begin{abstract}
We explore several variations of the notion of {\it purity} for the action of Frobenius on
schemes defined over finite fields. In particular, we study how these notions are preserved
under certain natural operations like quotients for principal bundles and also geometric 
quotients for reductive group actions. We then apply these results to study the cohomology 
of quiver moduli. We prove that a natural stratification of the space of representations 
of a quiver with a fixed dimension vector is equivariantly perfect and from it deduce that 
each of the $l$-adic cohomology groups of the quiver moduli space is strongly pure.
\end{abstract}

\title[Purity and quiver moduli]
{Notions of purity and the cohomology of quiver moduli}

\date{}

\thanks{The second author thanks the Institut Fourier, the MPI (Bonn), the IHES (Paris) 
and the NSA for support}
\maketitle

\setcounter{section}{-1}

\section{\bf Introduction}
\label{sec:introduction}

Consider a scheme $X$ of finite type over a finite field $\bF_q$. Then 
the number of points of $X$ that are rational over a finite extension 
$\bF_{q^n}$ is expressed by the trace formula
\begin{equation}
\label{eqn:trace}
|X(\bF_{q^n})| = \sum _{i \ge 0} (-1)^i \Tr\big( F^n, H^i_c(\bar X, \bQ_l) \big)
\end{equation} 
where $\bar X = X {\underset {\Spec \bF_q} \times }\Spec \bar{\bF}_q$ and $F$ 
denotes the Frobenius morphism of $\bar X$. 
The results of Deligne (see \cite{De74a, De77, De80}) show that every 
eigenvalue of $F$ on $H^i_c(\bar X, \bQ_l)$ has absolute value $q^{w/2}$ 
for some non-negative integer $w \le i$.

\medskip

In \cite{BP10}, the first named author and Peyre studied in detail the properties 
of the counting function $n \mapsto |X(\bF_{q^n} )|$, when $X$ is a homogeneous 
variety under a linear algebraic group (all defined over $\bF_q$). 
For this, they introduced the notion of a 
weakly pure variety $X$, by requesting that all eigenvalues of $F$ in 
$H^*_c(\bar X, \bQ_l)$ are of the form $\zeta q^j$, where $\zeta$ is a root
of unity, and $j$ a non-negative integer. This implies that the counting 
function of $X$ is a periodic polynomial with integer coefficients, i.e., 
there exist a positive integer $N$ and polynomials 
$P_0(t),\ldots,P_{N-1}(t)$ in $\bZ[t]$ such that 
$\vert X(\bF_{q^n} \vert = P_r(q^n)$ whenever $n \equiv r$ (mod $N$). They 
also showed that homogeneous varieties under linear algebraic groups are 
weakly pure.

\medskip

The present paper arose out of an attempt to study the notion of weak purity
in more detail, and to see how it behaves with respect to torsors and 
geometric invariant theory quotients. While applying this notion
to moduli spaces of quiver representations, it also became clear that they 
satisfy a stronger notion of purity, which in fact differs from the notion
of a strongly pure variety, introduced in \cite{BP10} as a technical device.   
Thus, we were led to define weak and strong purity in a more general 
setting, and to modify the notion of strong purity so that it applies to 
GIT quotients.

\medskip

Here is an outline of the paper. In the first section, we introduce 
a notion of weak purity for equivariant local systems (generalizing that in 
\cite{BP10} where only the constant local system is considered) and a closely 
related notion of strong purity.  
The basic definitions are in Definition~\ref{def.weak-strong-purity}. 
Then we study how these notions behave with respect to torsors and certain 
associated fibrations. The main result is the following.

\begin{theorem} (See Theorem~\ref{thm:main1}.)
Let $\pi : X \to Y$ denote a torsor under a linear algebraic group $G$, 
all defined over $\bF_q$.
Let $\C_X$ denote a class of $G$-equivariant $l$-adic local systems on $X$, and $\C_Y$ a 
class of $G$-equivariant $l$-adic local systems on $Y$, where $Y$ is provided with the trivial
$G$-action.
\medskip

(i) Suppose $\C_X \supseteq \pi^*(\C_Y)$.
Then if $X$ is  weakly pure with respect to $\C_X$  so is $Y$
with respect to $\C_Y$. In case  $G$ is split, and if  $X$ is strongly pure with 
respect to $\C_X$, then so is $Y$ with respect to $\C_Y$.  
\medskip

(ii) Suppose $G$ is also connected and $\C_X= \pi^*(\C_Y)$. Then if $Y$ is weakly pure with
respect to $\C_Y$, so is $X$ with respect to $\C_X$. In case $G$ is split and if $Y$ is strongly pure
with respect to $\C_Y$, so is $X$ with respect to $\C_X$.

\end{theorem}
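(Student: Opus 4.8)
The plan is to reduce everything to a computation with the Leray spectral sequence of the torsor $\pi: X \to Y$ and, in the equivariant setting, with the fibration $\EGX \to \EGY$ obtained by applying the Borel construction. Fix a $G$-equivariant local system $\cL$ on $Y$ (in the class $\C_Y$) and let $\pi^*\cL$ be its pullback to $X$; since $Y$ carries the trivial $G$-action, $\cL$ is the same as an ordinary local system on $Y$ together with an action of $G$ on its stalks, and $H^*_G(X, \pi^*\cL)$ computes to $H^*(Y, \cL)$-type groups. The starting point is that for a torsor $\pi: X\to Y$ under $G$, one has a canonical isomorphism $H^*_G(X, \pi^*\cL) \cong H^*(Y,\cL)$ (descent along the torsor), compatibly with the Frobenius action. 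This is the mechanism that ties the equivariant cohomology of $X$ to the ordinary cohomology of $Y$.

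First, for part (i): assuming $X$ is weakly (resp.\ strongly) pure with respect to $\C_X \supseteq \pi^*(\C_Y)$, I would take $\cL \in \C_Y$, form $\pi^*\cL \in \C_X$, and use the purity of $H^*_G(X,\pi^*\cL)$ — here "equivariant purity" should be read off from the definition in Definition~\ref{def.weak-strong-purity}, presumably phrased as purity of $H^*(\EGX, \pi^*\cL)$ — together with the descent isomorphism above to conclude purity of $H^*(Y,\cL)$. The only subtlety is the passage between $H^*_G(X, -)$ and the cohomology of the geometric $X$: one must know that the Borel construction does not destroy weak/strong purity, and for the strong-purity statement one needs $G$ to be split so that $H^*_G(\mathrm{pt})$ — the cohomology of $BG$ — is generated in even degrees by classes on which Frobenius acts by the relevant powers of $q$ (Chern classes of the tautological bundles, as for $BGL_n$, $B\bG_m$, etc.), which is exactly where the split hypothesis is used. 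Running the Leray–Hirsch / spectral-sequence comparison with these explicit Frobenius weights on the base $BG$ gives the strong-purity transfer.

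Second, for part (ii): now $G$ is connected and $\C_X = \pi^*(\C_Y)$, so every local system in $\C_X$ is of the form $\pi^*\cL$. Assuming $Y$ weakly (resp.\ strongly, if $G$ split) pure with respect to $\C_Y$, I want to deduce the corresponding statement for $X$. Here I would use the Leray spectral sequence of $\pi: X\to Y$ with coefficients in $\pi^*\cL$; because $\pi$ is a $G$-torsor, the higher direct images $R^q\pi_*\pi^*\cL$ are $\cL \otimes H^q(G,\bQ_l)$, and connectedness of $G$ (plus the standard structure of $H^*(G,\bQ_l)$ for a connected linear algebraic group — an exterior algebra on odd-degree generators with known Frobenius eigenvalues, again powers of $q$ times roots of unity, and honest powers of $q$ in the split case) lets me control all the Frobenius eigenvalues appearing on the $E_2$-page. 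Since the eigenvalues on $H^*(Y,\cL)$ are pure by hypothesis and those on $H^*(G,\bQ_l)$ are pure, their products are pure, and the spectral sequence (which degenerates sufficiently, or at least cannot introduce non-pure eigenvalues) yields purity of $H^*(X, \pi^*\cL)$.

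The main obstacle I expect is precisely the bookkeeping of Frobenius eigenvalues through $H^*(BG)$ and $H^*(G)$ in the non-split case: for weak purity it suffices that these eigenvalues are roots of unity times powers of $q$, which holds for any connected linear algebraic group after passing to $\bar\bF_q$, but for strong purity one genuinely needs the split hypothesis to pin down the eigenvalues as exact powers of $q$ with no root-of-unity ambiguity — and one must check that the torus/Levi combinatorics do not reintroduce such ambiguity. A secondary technical point is verifying that the descent isomorphism $H^*_G(X,\pi^*\cL)\cong H^*(Y,\cL)$ and the Leray spectral sequence of $\pi$ are Frobenius-equivariant in the appropriate sense; this is standard but must be invoked carefully, and it is where the hypothesis that everything is defined over $\bF_q$ is essential.
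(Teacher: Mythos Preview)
Your plan rests on a misreading of Definition~\ref{def.weak-strong-purity}: weak/strong purity of $X$ with respect to $\C$ is a condition on the \emph{ordinary} cohomology $H^*(\bar X,\bar\cL)$, not on $H^*(\EGX,\cL)$. You half-notice this (``the passage between $H^*_G(X,-)$ and the cohomology of the geometric $X$''), and the corrected version of your part~(i) would be: $H^*(X,\pi^*\cL)$ pure $\Rightarrow H^*_G(X,\pi^*\cL)$ pure via the spectral sequence for $p:\EGX\to BG$, then descent gives $H^*_G(X,\pi^*\cL)\cong H^*(Y,\cL)$. That first implication works when $G$ is \emph{connected}, since then $\pi_1(BG_{\et})\,\widehat{}\,$ is trivial, the sheaves $R^tp_*(\pi^*\cL)$ are constant, and $E_2^{s,t}\cong H^s(BG,\bQ_l)\otimes H^t(X,\pi^*\cL)$ with $H^*(BG,\bQ_l)$ pure by Theorem~\ref{thm:BG}. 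But part~(i) does not assume $G$ connected, and for disconnected $G$ those higher direct images need not be constant; your outline has no mechanism for this. The paper handles it by factoring through $X/G^o$ and treating the Galois cover $X/G^o\to X/G$ separately (Proposition~\ref{prop:nonc}).

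The paper's overall route is in fact quite different from yours: rather than passing through the Borel construction, it factors $\pi$ along the chain $R_u(G)\subset B\subset G^o\subset G$ into affine-space fibrations, a flag-variety fibration (Deligne's degeneracy criterion plus a Leray--Hirsch argument, Proposition~\ref{prop:fib}), a torus torsor reduced by induction to the $\bG_m$ case (an inductive walk along the Gysin-type long exact sequence, Proposition~\ref{prop:torsor}), and the finite cover just mentioned. Your approach to part~(ii), via the Leray spectral sequence of $\pi$ with fibre input $H^*(G,\bQ_l)$, is a legitimate alternative in spirit, but two ingredients are asserted rather than supplied: that $R^q\pi_*\bQ_l$ is the \emph{constant} sheaf $H^q(G,\bQ_l)$ on $Y$ (Proposition~\ref{prop:fibration} gives only local constancy; you need an argument that the monodromy, and the Frobenius structure, are as expected), and that $H^*(G,\bQ_l)$ is itself weakly (resp.\ strongly, for $G$ split) pure. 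The latter you call ``standard'', but it is exactly the case $Y=\Spec\bF_q$ of the theorem you are proving, and establishing it independently amounts to carrying out the same structural reduction the paper performs.
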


As explained just after Theorem \ref{thm:main1}, the above theorem applies 
with the following choice of the classes $\C_X$ and $\C_Y$: 
let $\C_X$ denote the class of $G$-equivariant $l$-adic local systems on $X$ 
obtained as split summands of $\rho_{X*}({\underline \Q}_l^{\oplus ^n})$ 
for some $n >0$, where $\rho_X: X' \ra X$ is a $G$-equivariant finite \'etale 
map and ${\underline \Q}_l^{\oplus ^n}$ is the constant local system of rank $n$ 
on $X'$, and define $\C_Y$ similarly.

In the second section, we first apply some of the above results to show that geometric 
invariant theory quotients of smooth varieties by connected reductive groups 
preserve the properties of weak and strong purity. Specifically, we obtain 
the following.

\begin{theorem} (See Theorem ~\ref{thm:main2}.)
Consider a smooth variety $X$ provided with the action of a connected reductive
group $G$ and with an ample, $G$-linearized line bundle $L$, such that the
following two conditions are satisfied:

\medskip

\noindent
{\rm (i)} Every semi-stable point of $X$ with respect to $L$ is stable. 

\medskip

\noindent
{\rm (ii)} $X$ admits an equivariantly perfect stratification 
with open stratum the (semi)-stable locus.
 
\medskip

If $X$ is weakly pure with respect to the constant local system $\bQ_l$, 
then so is the geometric invariant theory quotient $X/\!/G$. 

\medskip

If $G$ is split and $X$ is strongly pure with respect to $\bQ_l$, 
then so is $X/\!/G$.
\end{theorem}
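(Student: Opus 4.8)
The plan is to reduce the statement, in three steps, to Theorem~\ref{thm:main1}(i). Write $X^{ss}$ for the (semi)stable locus and recall that, by hypothesis (i), the GIT quotient $X/\!/G$ is a \emph{geometric} quotient $X^{ss}/G$: the orbit map $X^{ss}\to X/\!/G$ is proper, its fibres are single orbits, and all stabilizers are finite. Consequently the canonical morphism from the quotient stack $[X^{ss}/G]$ to its coarse moduli space $X/\!/G$ is proper and quasi-finite with geometric fibres of the form $B\Gamma$, $\Gamma$ finite, hence rationally acyclic; it therefore induces a Frobenius-equivariant isomorphism $H^*(\overline{X/\!/G},\bQ_l)\cong H^*_G(\bar X^{ss},\bQ_l)$. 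So it is enough to prove that the graded Galois module $H^*_G(\bar X^{ss},\bQ_l)$ is weakly pure, and strongly pure when $G$ is split.

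For this I would exploit hypothesis (ii). Choosing an ordering $X^{ss}=U_0\subset U_1\subset\cdots\subset U_k=X$ of the stratification in which each $S_i:=U_i\setminus U_{i-1}$ is a smooth locally closed subvariety, closed in $U_i$, of some codimension $d_i$, smoothness of $X$ makes available the equivariant Gysin long exact sequences $\cdots\to H^{j-2d_i}_G(\bar S_i,\bQ_l)(-d_i)\to H^j_G(\bar U_i,\bQ_l)\to H^j_G(\bar U_{i-1},\bQ_l)\to\cdots$, which are sequences of Galois modules. Equivariant perfection means precisely that all connecting maps vanish, so that each $H^*_G(\bar U_{i-1})$ is a Galois-module quotient of $H^*_G(\bar U_i)$; iterating, $H^*_G(\bar X^{ss},\bQ_l)$ is a Galois-module quotient of $H^*_G(\bar X,\bQ_l)$, the kernel being assembled from the twisted modules $H^{*-2d_i}_G(\bar S_i,\bQ_l)(-d_i)$. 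Since weak purity, and — by the way it has been set up — strong purity as well, is preserved under passage to Galois subquotients and is unaffected by Tate twists, we are reduced to showing that $H^*_G(\bar X,\bQ_l)$ is weakly pure, resp. strongly pure for $G$ split. This is the step I expect to be the most delicate: one must be sure that equivariant perfection is invoked at the level of Galois modules, with the correct Tate twists by the codimensions $d_i$, and not merely as an identity of Poincar\'e series; the point is that the strata $S_i$ are smooth and their Thom classes are defined over $\bF_q$, so the Gysin maps and the resulting splittings are Frobenius-equivariant.

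It remains to prove $H^*_G(\bar X,\bQ_l)$ weakly, resp. strongly, pure, and here Theorem~\ref{thm:main1}(i) does the work after a standard approximation. Fix an embedding $G\hookrightarrow\GL_n$ and let $W_N$ be the $N$-th term of the usual approximation of $EG$ — an open subvariety of an affine space on which $G$ acts freely and whose complement has codimension growing with $N$ — so that $H^j_G(\bar X,\bQ_l)=H^j(\overline{W_N\times_G X},\bQ_l)$ for $N\gg j$. The first projection $\pi\colon W_N\times X\to W_N\times_G X$ is a $G$-torsor, where the base carries the trivial $G$-action, and by the K\"unneth formula $H^*(\overline{W_N\times X},\bQ_l)=H^*(\bar W_N,\bQ_l)\otimes H^*(\bar X,\bQ_l)$; in any fixed range of degrees the first tensor factor reduces to $\bQ_l$ in degree $0$ once $N$ is large, so in that range $W_N\times X$ is weakly, resp. strongly, pure because $X$ is. Applying Theorem~\ref{thm:main1}(i) to $\pi$, with $\C_X=\C_Y=\{\bQ_l\}$ (so that $\C_X\supseteq\pi^*\C_Y$ holds trivially) and with $G$ split in the strong case, shows that $W_N\times_G X$ is weakly, resp. strongly, pure in that range; letting $N\to\infty$ we conclude that $H^*_G(\bar X,\bQ_l)$, and hence $H^*(\overline{X/\!/G},\bQ_l)$, is weakly pure, and strongly pure when $G$ is split.
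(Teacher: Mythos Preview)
Your first two reductions --- identifying $H^*(\overline{X/\!/G},\bQ_l)$ with $H^*_G(\bar X^{\sst},\bQ_l)$ via the finite stabilizers, and then using equivariant perfection to exhibit the latter as a Frobenius-equivariant quotient of $H^*_G(\bar X,\bQ_l)$ --- match the paper's proof exactly. The divergence is in the final step. The paper does not invoke Theorem~\ref{thm:main1}; instead it analyses the Leray spectral sequence for the projection $EG\times_G X\to BG$ with fibre $X$. Since $G$ is connected, the higher direct images are constant and $E_2^{s,t}=H^s(BG,\bQ_l)\otimes H^t(\bar X,\bQ_l)$; Deligne's result (Theorem~\ref{thm:BG}) makes the first factor weakly pure (strongly pure for $G$ split), so by Lemma~\ref{lem:vec} the $E_2$-terms, the $E_\infty$-terms, and the abutment are all pure. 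This is shorter and sidesteps finite-dimensional approximation entirely.

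Your route through Theorem~\ref{thm:main1}(i) is viable, but as written there is a gap: Theorem~\ref{thm:main1}(i) concerns \emph{all} of $H^*$, not cohomology in a range, so you cannot apply it to $W_N\times X$ knowing only that $H^j(\bar W_N,\bQ_l)$ agrees with $H^j(\mathrm{pt})$ for small $j$. The cleanest repair is to observe that the Stiefel variety $W_N$ (injective maps $\bA^n\hookrightarrow\bA^{n+N}$) is itself strongly pure: it is a $\GL_n$-torsor over the Grassmannian $\Gr(n,n+N)$, which is strongly pure as a smooth projective variety stratified by affine spaces, and Theorem~\ref{thm:main1}(ii) transfers this to $W_N$. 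Then $W_N\times X$ is fully weakly/strongly pure by K\"unneth and Lemma~\ref{lem:vec}(iii), Theorem~\ref{thm:main1}(i) applies without the ``in a range'' caveat, and your limit argument goes through. Alternatively one could verify that the degree-by-degree inductions in Propositions~\ref{prop:fib}--\ref{prop:torsor} make Theorem~\ref{thm:main1} effective in the sense you need, but this is more laborious than the one-line fix above, and still more laborious than the paper's direct appeal to Theorem~\ref{thm:BG}.
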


Here we recall that a stratification by smooth, locally closed $G$-subvarieties 
is {\it equivariantly perfect}, if the associated long exact sequences 
in equivariant cohomology break up into short exact sequences
(see e.g. \cite[p.~34]{Kir84}. When $X$ is projective, such a 
stratification has been constructed by Kirwan via an analysis of semi-stability
(see \cite[Theorem 13.5]{Kir84}, and also the proof of Corollary \ref{cor:git}
for details on equivariant perfection). 
We also obtain an extension of the above theorem with local systems in the 
place of $\bQ_l$: this is in Theorem~\ref{thm:main3}.

Next, we study in detail the quiver moduli spaces using these techniques. 
In particular, we show that the space of representations of a given quiver 
with a fixed dimension vector satisfies our assumption (ii) (since that
space is affine, this assertion does not follow readily from Kirwan's theorem 
quoted above). Our main result in this setting is the following. 

\begin{theorem} (See Theorem ~\ref{thm:main4}.)
Let $X$ denote the representation space of a given quiver with a given dimension 
vector. Then the stratification of $X$ defined by using semi-stability with respect 
to a fixed character $\Theta$ is equivariantly perfect.
\end{theorem}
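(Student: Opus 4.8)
The plan is to follow the general strategy for equivariant perfection of Hesselink–Kirwan–Kempf–Ness type stratifications, adapted to the affine setting of quiver representations. Recall that $X = \bigoplus_{a} \Hom(V_{s(a)}, V_{t(a)})$ is the representation space of the quiver $Q$ with dimension vector $\bfd = (d_v)$, acted on by $G = \prod_v \GL(V_v)$, and that for a fixed character $\Theta$ one defines slope-(semi)stability à la King. The Hesselink/Kirwan stratification indexes the strata $S_\beta$ of the unstable locus by conjugacy classes of rational one-parameter subgroups $\beta$ (equivalently, by the ``type'' of the Harder–Narasimhan filtration determined by $\Theta$); each $S_\beta$ fibers $G$-equivariantly over a ``blade'' $Y_\beta$, and $Y_\beta$ retracts onto the fixed-point locus $Z_\beta$ of $\beta$, which is itself (up to a shift of group) the representation space of a smaller quiver problem — concretely, the product of representation spaces of the subquotients of the Harder–Narasimhan filtration, acted on by the corresponding Levi $L_\beta = \prod Z_G(\beta)$. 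First I would set up this combinatorial indexing carefully, recording that the stratification is by smooth locally closed $G$-stable subvarieties with open stratum the semistable locus, so that assumption (ii) of Theorem~\ref{thm:main2} is exactly the claim to be proved here.

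Next I would recall the standard cohomological criterion: a stratification $X = \bigsqcup_\beta S_\beta$ (ordered so that $\overline{S_\beta} \subseteq \bigcup_{\beta' \ge \beta} S_{\beta'}$) is equivariantly perfect if and only if for each $\beta$ the equivariant Gysin/long exact sequence
\[
\cdots \to H^{*-2c_\beta}_G(S_\beta) \to H^*_G(U_\beta) \to H^*_G(U_\beta \setminus S_\beta) \to \cdots
\]
breaks into short exact sequences, where $U_\beta = \bigcup_{\beta' \le \beta} S_{\beta'}$ is the open set on which $S_\beta$ is closed and $c_\beta$ is the (constant) codimension of $S_\beta$ in $X$. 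By the usual dévissage this reduces to showing that each equivariant Gysin map $H^{*-2c_\beta}_G(S_\beta) \to H^*_G(U_\beta)$ is injective, and the classical sufficient condition (as in \cite[\S5--6]{Kir84}) is that the equivariant Euler class of the normal bundle $N_\beta$ of $S_\beta$ in $X$ is not a zero-divisor in $H^*_{G}(S_\beta)$. By the fibration $S_\beta \to Z_\beta$ and homotopy invariance, $H^*_G(S_\beta) \cong H^*_{L_\beta}(Z_\beta)$, and the normal bundle $N_\beta$ decomposes under the $\beta$-action into pieces on which $\beta$ acts with strictly positive weights; so its equivariant Euler class becomes, after restricting along the central one-parameter subgroup $\beta \hookrightarrow L_\beta$, a product of terms $(r\,t + (\text{class on } Z_\beta))$ with $r > 0$, where $t$ generates $H^*_{B\bG_m}$. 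The key point — and this is the place where affineness must be used rather than projectivity — is that $H^*_{L_\beta}(Z_\beta)$ is a free module over $H^*_{B\bG_m} = \bQ_l[t]$, because $Z_\beta$ is an affine space (a product of $\Hom$-spaces), so $H^*_{L_\beta}(Z_\beta) \cong H^*(BL_\beta)$ and the central torus acts freely enough on the polynomial generators; hence multiplication by such a product of positive-weight linear forms is injective. This is precisely the non-zero-divisor statement needed.

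The main obstacle I anticipate is exactly this last freeness/non-zero-divisor verification: in Kirwan's original argument projectivity (or at least properness) of $X$ enters through the use of the Atiyah–Bott lemma and the perfection of the Morse stratification of $\|\mu\|^2$, and one must check that the analogous input survives when $X$ is merely a $G$-representation. I would handle this by observing that $H^*_G(X) = H^*_G(\mathrm{pt}) = H^*(BG)$ (since $X$ is equivariantly contractible), and likewise $H^*_G(S_\beta) = H^*(BL_\beta)$, both of which are polynomial rings; the restriction maps and equivariant Euler classes are then computed purely in terms of weights of $\beta$ on the tangent spaces, where positivity is built into the definition of $S_\beta$. A secondary technical point is to make sure the stratification really is locally finite and that the ordering has the closure property $\overline{S_\beta} \subseteq \bigcup_{\beta' \ge \beta} S_{\beta'}$, which follows from semicontinuity of the Harder–Narasimhan type; and that the codimension $c_\beta$ is locally constant, which one reads off from the weight-space decomposition of $X$ under $\beta$. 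Assembling these pieces, the long exact sequences split as required, proving that the stratification is equivariantly perfect; combined with condition (i) (which for quiver moduli amounts to choosing $\bfd$ with the $d_v$ coprime so that semistable $=$ stable), this then feeds into Theorem~\ref{thm:main2} to yield the strong purity of the cohomology of the quiver moduli space.
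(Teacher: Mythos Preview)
Your overall strategy---reduce equivariant perfection to the Atiyah--Bott criterion that the equivariant Euler class of each normal bundle $N_\beta$ is a non-zero-divisor, then verify this by restricting to the one-parameter subgroup $\lambda_\beta$ and using that $N_\beta$ has only nonzero $\lambda_\beta$-weights---is exactly the paper's approach. But there is a genuine gap in how you carry it out.

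You repeatedly write $Z_\beta$ where the relevant object is $Z_\beta^{\sst}$. The Harder--Narasimhan stratum $S_\beta$ is $G\times_{P_\beta} Y_\beta^{\sst}$, and the retraction lands in $Z_\beta^{\sst}$, the locus where every subquotient is \emph{semi-stable}; thus $H^*_G(S_\beta)\cong H^*_{L_\beta}(Z_\beta^{\sst})$, not $H^*_{L_\beta}(Z_\beta)$. Your claim ``$H^*_G(S_\beta)=H^*(BL_\beta)$'' is therefore false in general: $Z_\beta^{\sst}$ is an open subset of the product of smaller representation spaces, not itself an affine space, and its $L_\beta$-equivariant cohomology is typically a proper quotient of $H^*(BL_\beta)$. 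Consequently your justification of freeness over $\bQ_l[t]$ (``because $Z_\beta$ is an affine space'') proves the wrong statement.

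The fix, which is what the paper does via its Lemma~\ref{lem:ab}, is to observe that freeness over $\bQ_l[t]$ has nothing to do with $Z_\beta$ being affine: it follows purely from the fact that the image of $\lambda_\beta$ is a subtorus of $L_\beta$ acting \emph{trivially} on $Z_\beta^{\sst}$ (since $Z_\beta^{\sst}$ is open in the $\lambda_\beta$-fixed locus, Proposition~\ref{equiv.perf.0}). After passing to a maximal torus $T_L\supseteq\lambda_\beta$, one has $H^*_{T_L}(Z_\beta^{\sst})\cong H^*(B\lambda_\beta)\otimes H^*_{T_L/\lambda_\beta}(Z_\beta^{\sst})$, and the Euler class is a product of factors $\chi+\alpha_i$ with $\chi\ne 0$; this is a non-zero-divisor regardless of what $H^*_{T_L/\lambda_\beta}(Z_\beta^{\sst})$ looks like. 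You have also misdiagnosed where projectivity enters in Kirwan's argument: the Atiyah--Bott lemma itself is indifferent to projectivity; properness is used only to guarantee that the limits $\lim_{t\to 0}\lambda_\beta(t)\cdot M$ exist, which in the quiver setting is verified directly (Proposition~\ref{equiv.perf.0}(i)) because $X$ is a linear representation.
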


Thus, the condition (ii) in Theorem \ref{thm:main2} holds in this setting.
Since the condition (i) holds for general values of $\Theta$, it follows that 
the corresponding geometric invariant theory quotient (i.e. the quiver moduli space) 
is strongly pure with respect to $\bQ_l$. i.e. We obtain the following result (see Corollary ~\ref{cor.quiver.moduli}):
\begin{corollary} 
Assume in addition to the above situation that each semi-stable point is stable.
Then the $l$-adic cohomology $H^*\big( M^{\Theta-\st}(Q, \bfd),\bQ_l \big)$ 
vanishes in all odd degrees, $H^*\big( M^{\Theta-\st}(Q, \bfd),\bQ_l \big)$ is strongly pure,
and hence the number of $\bF_{q^n}$-rational points of $M^{\Theta-\st}(Q, \bfd)$
is a polynomial function of $q^n$ with integer coefficients.
\end{corollary}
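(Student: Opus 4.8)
The plan is to deduce the Corollary from Theorem~\ref{thm:main4} together with Theorem~\ref{thm:main2} (and its local system refinement), exactly as flagged in the outline. First I would verify that the hypotheses of Theorem~\ref{thm:main2} are met for $X$ the representation space $R(Q,\bfd)$ of the quiver, with $G$ the product of general linear groups $\prod_i \GL_{d_i}$ (or rather its quotient $G/\bG_m$ acting faithfully) and $L$ the line bundle associated to the character $\Theta$. Condition (i) is assumed in the Corollary's hypothesis. Condition (ii) — the existence of an equivariantly perfect stratification with open stratum the semistable locus — is precisely Theorem~\ref{thm:main4}, the Kirwan–Harder–Narasimhan type stratification of $R(Q,\bfd)$ by Harder–Narasimhan types relative to $\Theta$. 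Since $R(Q,\bfd)$ is a finite-dimensional affine space over $\bF_q$, it is certainly smooth, and it is weakly pure (indeed strongly pure) with respect to $\bQ_l$, its equivariant and ordinary cohomology being that of a point. So Theorem~\ref{thm:main2} applies directly and yields that $M^{\Theta\text{-}\st}(Q,\bfd) = R(Q,\bfd)/\!/G$ is weakly pure with respect to $\bQ_l$, and strongly pure when $G$ is split — which it is, being a product of general linear groups over $\bF_q$.

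Next I would extract the vanishing of odd cohomology. The equivariantly perfect stratification gives, by the Atiyah–Bott / Kirwan argument, that $H^*_G(R(Q,\bfd),\bQ_l)$ is the direct sum over strata $S_\beta$ of $H^{*-2\,\mathrm{codim}\,S_\beta}_G(S_\beta,\bQ_l)$; each stratum retracts equivariantly onto a locus whose equivariant cohomology is, up to shift, a tensor product of equivariant cohomologies of smaller representation spaces, and $H^*_G(R(Q,\bfd),\bQ_l) = H^*(BG,\bQ_l)$ is concentrated in even degrees. Inducting on $\bfd$ (with the base case $\bfd$ small handled directly) shows each $H^*_G(S_\beta,\bQ_l)$ vanishes in odd degrees, hence so does the open-stratum contribution $H^*_G(R(Q,\bfd)^{\Theta\text{-}\st},\bQ_l)$. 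When all semistable points are stable, the quotient map $R(Q,\bfd)^{\Theta\text{-}\st} \to M^{\Theta\text{-}\st}(Q,\bfd)$ is a $PG$-torsor with $PG = G/\bG_m$ connected, so by the standard fibration argument (and using that $PG$ has no odd rational cohomology in the relevant range, together with the purity of $BPG$) one gets $H^*_G(R(Q,\bfd)^{\Theta\text{-}\st},\bQ_l) \cong H^*(M^{\Theta\text{-}\st}(Q,\bfd),\bQ_l) \otimes H^*(BPG,\bQ_l)$, forcing $H^*(M^{\Theta\text{-}\st}(Q,\bfd),\bQ_l)$ to vanish in odd degrees.

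Finally, the point-counting assertion follows formally. Strong purity of $H^*(M^{\Theta\text{-}\st}(Q,\bfd),\bQ_l)$ means every Frobenius eigenvalue on $H^i$ is of the form $q^{i/2}$ (so $i$ is even and the eigenvalue is a genuine power of $q$, with no root-of-unity ambiguity). Since $M^{\Theta\text{-}\st}(Q,\bfd)$ is smooth and projective (it is a GIT quotient of a smooth affine variety with all semistable points stable, which makes the quotient smooth; projectivity over the affine base needs a separate standard argument, or one works with the natural projective completion and notes the count is still polynomial), Poincar\'e duality identifies $H^i_c$ with $H^{2N-i}$, $N = \dim M$, and the trace formula~\eqref{eqn:trace} becomes $|M^{\Theta\text{-}\st}(Q,\bfd)(\bF_{q^n})| = \sum_{i} \dim H^{2i}(M,\bQ_l)\, q^{ni}$, a polynomial in $q^n$ with non-negative integer coefficients.

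The main obstacle I anticipate is Theorem~\ref{thm:main4} itself — establishing equivariant perfection of the $\Theta$-stratification of the \emph{affine} space $R(Q,\bfd)$ — since, as the authors note, Kirwan's theorem is stated for projective $X$ and does not apply verbatim here; one must check that the Harder–Narasimhan strata are smooth $G$-subvarieties whose normal bundles have the right equivariant Euler class behaviour (e.g.\ via the Bia\l ynicki-Birula / Hesselink description of the strata as $G$-sweeps of fixed loci of one-parameter subgroups) so that the Thom–Gysin sequences split. But that work is done in Theorem~\ref{thm:main4}, which I am entitled to assume; granting it, the deduction of the Corollary above is essentially formal, the only mild care being the passage from equivariant to ordinary cohomology through the $PG$-torsor and the handling of properness for the final counting statement.
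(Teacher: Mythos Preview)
Your approach is correct in outline but substantially more roundabout than the paper's. The paper does not invoke Theorem~\ref{thm:main2} at all; instead it argues directly: equivariant perfection (Theorem~\ref{thm:main4}) gives a surjection $H^*_G(X,\bQ_l)\twoheadrightarrow H^*_G(X^{\sst},\bQ_l)$; since $X=R(Q,\bfd)$ is an affine space, the left side is $H^*(BG,\bQ_l)$; since $X^{\sst}=X^{\st}\to M^{\Theta\text{-}\st}(Q,\bfd)$ is a $PG(Q,\bfd)$-torsor, the right side is $H^*(M^{\Theta\text{-}\st}(Q,\bfd),\bQ_l)$. Then Theorem~\ref{thm:BG} for the split group $G$ gives both odd vanishing and the Frobenius action by $q^n$ on $H^{2n}(BG,\bQ_l)$, and these properties pass to the quotient $H^*(M,\bQ_l)$ immediately. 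No induction on $\bfd$, no analysis of the individual strata, and no separate purity argument is needed.

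Your odd-vanishing argument via induction is unnecessary for a simpler reason than you notice: once you have the direct-sum decomposition $H^*_G(X)\cong\bigoplus_\beta H^{*-2c_\beta}_G(S_\beta)$ from equivariant perfection and you know $H^*_G(X)=H^*(BG)$ is concentrated in even degrees, each summand---in particular the open-stratum contribution---is already forced to be even. Also, two slips: your formula $H^*_G(X^{\st})\cong H^*(M)\otimes H^*(BPG)$ is wrong; the correct identification is $H^*_{PG}(X^{\st})\cong H^*(M)$ (a free quotient), or if you insist on the full $G$ you get a $B\bG_m$ factor, not $BPG$. And strong purity in this paper (Definition~\ref{def.weak-strong-purity}) means each Frobenius eigenvalue is some $q^j$, not necessarily $q^{i/2}$ on $H^i$; your final point-count formula $\sum_i\dim H^{2i}\cdot q^{ni}$ actually requires the sharper statement that $F$ acts by $q^n$ on $H^{2n}$, which the paper's direct argument delivers but Theorem~\ref{thm:main2} alone does not. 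The polynomiality claim in the Corollary follows already from strong purity and smoothness via Remark~\ref{rem:wsp}(iv), with no need for projectivity.
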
 
\vskip .3cm
The last corollary recovers certain results of Reineke 
(see \cite[Section 6]{Re03} and \cite[Theorem 6.2]{Re06}) which are established 
by using the combinatorics of the Hall algebra associated to the quiver. 
Our proof is purely based on geometric invariant theory coupled with the theory 
of weak and strong purity developed in the first section of this paper.

\medskip

It may be worth pointing out that several of the varieties that are weakly pure, 
for example, connected reductive groups, turn out to be mixed Tate. The weight 
filtration and the {\it slice-filtration} for such varieties are related in 
\cite{HK06}. In view of this, we hope to explore the results of this paper 
in a motivic context in a sequel. 

\medskip

{\bf Acknowledgments}. The authors would like to thank the referees for their 
valuable remarks and comments.

\section{\bf Notions of purity}

\subsection{Equivariant local systems}

Throughout the rest of the paper, we will only consider separated schemes of 
finite type which are defined over a finite field $\bF=\bF_q$ with $q$ elements 
where $q$ is a power of the characteristic $p$.  Given such a scheme $X$, 
we will denote by $X_{\bar \bF}$ (or $\bar X$) its base extension to the algebraic 
closure $\bar \bF$ of $\bF$, and by $F: \bar X \to \bar X$ the Frobenius morphism.

We will consider $l$-adic sheaves and local systems, where $l\neq p$ is a prime 
number. Recall the following from \cite[Chapitre 5]{BBD81}. Given a scheme $X$, 
an $l$-adic sheaf $\cL = \{ \cL_{\nu} \mid \nu \in \bZ_{\ge 1} \}$ 
on the \'etale site $X_{\et}$ will mean an inverse system of sheaves with each 
$\cL_{\nu}$ a constructible sheaf of $\bZ/l^{\nu}$-modules. Such an $l$-adic sheaf
defines by base
extension an $l$-adic sheaf $\bar \cL$ on $\bar X$ provided with an isomorphism 
$F^*(\bar \cL) \to \bar \cL$. An $l$-adic local system on $X$ is an $l$-adic 
{\it lisse} sheaf $\{\cL_{\nu}|\nu\}$, that is, each $\cL_{\nu}$ is locally constant 
on $X_{\et}$.

We now recall some basic properties of higher direct images of $l$-adic sheaves 
under fibrations. We say that a morphism of schemes $f: X \to Y$ is a 
{\it locally trivial fibration} 
if $f$ is smooth, and there exists an \'etale covering 
$Y' \to Y$ such that the pull-back morphism 
$f' : X' := X {\underset Y \times} Y' \to Y'$ 
is isomorphic to the projection $Y' \times Z \to Y'$ for some scheme $Z$.
Then $Z$ is smooth, and all fibers of $f$ at $\bar{\bF}$-rational points are 
isomorphic to $Z_{\bar{\bF}}$.

The main importance of this notion comes from the following result, which is in
fact a direct consequence of the definitions, the K\"unneth formula in \'etale
cohomology, and \cite[5.1.14]{BBD81}. 

\begin{proposition}\label{prop:fibration}
Let $f: X \to Y$ denote a locally trivial fibration. Let $\nu >0$ denote  any fixed
integer and let $\cF$ denote a constructible sheaf of $\bZ/l^{\nu}$-modules on 
$X_{\et}$ which is constant on some Galois covering of $X$. 

\medskip

{\rm (i)} Then each $R^mf_*(\cF)$ is a constructible sheaf of $\Z/l^{\nu}$-modules on 
$Y_{\et}$.

\medskip

{\rm (ii)} Let $\bar y$ denote a fixed geometric point of $Y$ and let 
$X_{\bar y} = X{\underset Y \times} \bar y$ denote the corresponding geometric fiber. Then
$R^mf_*(\cF)_{\bar y} \simeq H^m(X_{\bar y}, \cF_{|X_{\bar y}})$ for all $y$. In particular,
if $\cF$ is a locally constant constructible sheaf of $\bZ/l^{\nu}$-modules on $X_{\et}$,
then the sheaves $R^mf_*(\cF)$ for all $m \ge 0$ are locally constant constructible 
sheaves on $Y_{\et}$. 

\medskip

{\rm (iii)} Moreover, if $\cL$ is an $l$-adic local system that is mixed and of weight 
$\ge w$, then each $R^mf_*(\cL)$ is also mixed and of weight $\ge m+w$.
\end{proposition}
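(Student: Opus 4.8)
The plan is to reduce (i) and (ii) to the case of a \emph{trivial} fibration $Y'\times Z\to Y'$, where they become consequences of the K\"unneth formula, and to deduce (iii) directly from the weight formalism of \cite[Chapitre 5]{BBD81}. To this end, using the definition of a locally trivial fibration, I would choose an \'etale covering $g\colon Y'\to Y$ for which the base change $f'\colon X':=X\times_Y Y'\to Y'$ is isomorphic to a projection $\mathrm{pr}\colon Y'\times Z\to Y'$; write $g'\colon X'\to X$ for the induced map and set $\cF':=g'^*(\cF)$. Since $g$ is \'etale there are canonical isomorphisms $g^*R^mf_*(\cF)\simeq R^mf'_*(\cF')$, and the geometric fibres of $f$ together with the restrictions of $\cF$ to them are unaffected by the base change. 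As constructibility and local constancy of a sheaf on $Y_{\et}$ may be checked after pullback along the \'etale cover $g$, it suffices to prove (i) and (ii) for the projection $\mathrm{pr}$. Note that $Z$, being a fibre of $f$, is smooth of finite type over $\bF_q$, and that $\cF'$ is locally constant, being constant on the pullback to $X'$ of a Galois covering trivialising $\cF$.

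For the projection $\mathrm{pr}\colon Y'\times Z\to Y'$, the stalk of $R^m\mathrm{pr}_*(\cF')$ at a geometric point $\bar y'$ is $\varinjlim_U H^m(U\times Z,\cF')$, the colimit being taken over the \'etale neighbourhoods $U$ of $\bar y'$ in $Y'$; since \'etale cohomology commutes with such colimits of schemes, this equals $H^m(S\times Z,\cF')$ with $S:=\Spec\cO_{Y',\bar y'}^{\mathrm{sh}}$ strictly henselian. Because $\mathrm{pr}$ is smooth and $\cF'$ is locally constant, $\mathrm{pr}$ is locally acyclic relative to $\cF'$; combined with the K\"unneth formula and the vanishing of the higher \'etale cohomology of $S$, this identifies $H^m(S\times Z,\cF')$ with $H^m(Z_{\bar y'},\cF'|_{Z_{\bar y'}})$, which is the fibre formula of (ii). These stalks are finite, and the monodromy action of $\pi_1(Y')$ on $R^m\mathrm{pr}_*(\cF')_{\bar y'}\simeq H^m(Z_{\bar y'},\cF'|_{Z_{\bar y'}})$ coming from the product structure exhibits $R^m\mathrm{pr}_*(\cF')$ as a locally constant constructible sheaf; this gives (ii), and a fortiori the constructibility asserted in (i) (which in any case also follows from the general finiteness theorem for $Rf_*$ under morphisms of finite type).

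For (iii) no fibration structure is required: regarding the $l$-adic local system $\cL$ as a complex concentrated in degree $0$, it is mixed of weight $\ge w$ in the sense of \cite[Chapitre 5]{BBD81}, and since ``mixed of weight $\ge w$'' is stable under $Rf_*$ by \cite[5.1.14]{BBD81}, the complex $Rf_*(\cL)$ is mixed of weight $\ge w$; by definition its $m$-th cohomology sheaf $R^mf_*(\cL)$ then has punctual weights $\ge m+w$, and it is an $l$-adic local system by~(ii). I expect the only delicate point to be the fibre formula in~(ii): since $Z$ need not be proper, proper base change does not apply, and one must invoke the local acyclicity of the smooth morphism $\mathrm{pr}$ relative to the locally constant coefficients $\cF'$ --- equivalently, the product structure of $Y'\times Z$ over the strictly henselian base $S$ together with the triviality of the \'etale cohomology of $S$. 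The remaining assertions are then formal consequences of the K\"unneth formula and of the weight estimate \cite[5.1.14]{BBD81}.
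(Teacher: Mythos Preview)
Your proposal is correct and follows essentially the same route as the paper, which merely records that the proposition ``is in fact a direct consequence of the definitions, the K\"unneth formula in \'etale cohomology, and \cite[5.1.14]{BBD81}.'' You have simply unpacked these three ingredients: the \'etale-local trivialisation reduces to a product, the stalk computation over a strictly henselian base (via local acyclicity of smooth morphisms for locally constant torsion coefficients, which is what makes the K\"unneth identification go through even though $\cF'$ need not be an external tensor product) gives (i) and (ii), and \cite[5.1.14]{BBD81} gives (iii).
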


We now turn to equivariant local systems; for this, we first fix notations and 
conventions about algebraic groups and their actions.
A smooth group scheme will be called an {\it algebraic group}; we will only consider
{\it linear} algebraic groups in this paper. For such a group $G$, we denote by $G^o$
its {\it neutral component}, that is, the connected component containing the 
identity element $e_G$. We recall that $G^o$ is a closed normal subgroup of $G$, and 
that $G/G^o$ is a finite group. 

A scheme $X$ provided with the action of an algebraic group $G$ will be called a 
{\it $G$-scheme}. The action morphism $G \times X \to X$ will be denoted by
$(g,x) \mapsto g \cdot x$.

Given a $G$-scheme $X$, a $G$-{\it equivariant} $l$-adic local system on $X$ will 
denote an $l$-adic sheaf 
$$
\{\cL(m)|m \ge 0\} = \{\cL_{\nu}(m)|\nu \in \bZ_{\ge 1}, m \ge 0\}
$$ 
on the simplicial scheme $\EGx X$ such that the following two conditions are satisfied:

\noindent
(i) Each $\cL_{\nu}(m)$ is a locally constant sheaf of $\bZ/l^{\nu}$-modules 
on the \'etale site 
$$
(\EGx X)(m)_{\et} = (G^m \times X)_{\et}.
$$ 

\noindent
(ii) For any $\nu \ge 0$ and for any structure map $\alpha: [m] \to [n]$ in $\Delta$, 
the induced map $\alpha^*\big( \cL_{\nu}(n) \big) \to \cL_{\nu}(m)$ is an isomorphism.

\medskip

We say that $\cL$ is mixed and of weight $w$ (mixed and of weight $\ge w$), if
the $l$-adic sheaf $\{\cL_{\nu}(m)|\nu \}$
on $(\EGx X)_m$ is mixed and of weight $w$ (mixed and of weight $\ge w$, \res) 
for each $m \ge 0$.  (Since $G$ is smooth, it suffices 
to verify this condition just for $m = 0$.)  Given an $l$-adic local system 
$\cL =\{\cL_{\nu} | \nu \}$ as above, we let $\bar \cL $ denote the corresponding $l$-adic 
local system on $\bar X$ obtained by base extension.  

\begin{remark}
Proposition \ref{prop:fibration} (iii) justifies the condition on the weights 
put into the definition of the class of local systems. In this paper we only consider 
the derived direct image functors and not the derived direct image functors with 
proper supports. The latter functors send complexes of $l$-adic sheaves that are mixed 
and of weight $\le w$ to complexes of $l$-adic sheaves that are mixed and of weight 
$\le w$. Therefore, to consider these functors or $l$-adic cohomology with proper 
supports, one needs to consider classes of local systems that are mixed and of weight 
$\le w$ for some positive integer $w$.
\end{remark}

We now relate equivariant local systems for the actions of a connected 
algebraic group $G$ and of various subgroups. Recall that $G$ contains
a Borel subgroup $B$ (defined over $\bF$) which in turn contains a
maximal torus $T$ (also defined over $\bF$). Moreover, the pairs
$(B,T)$ as above are all conjugate under the group $G(\bF)$.

\begin{proposition}\label{prop:equi} 
Let $X$ denote a $G$-scheme, where $G$ is a connected algebraic group.
Fix a Borel subgroup $B \subseteq G$ and a maximal torus $T \subseteq B$.
Then the restriction functors induce equivalences of categories

($G$-equivariant $l$-adic local systems on $X$) $\simeq$ 
($B$-equivariant $l$-adic local systems on $X$) 
\newline 
$\simeq $ ($T$-equivariant $l$-adic local systems on $X$),

\noindent
where all local systems are considered on the corresponding schemes defined over $\bar{\bF}$.
\end{proposition}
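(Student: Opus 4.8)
The plan is to prove the two equivalences separately, handling $(B\text{-equivariant}) \simeq (T\text{-equivariant})$ first, and then $(G\text{-equivariant}) \simeq (B\text{-equivariant})$, exploiting in each case that the relevant homogeneous space is an affine space. Concretely, write $U$ for the unipotent radical of $B$, so that $B = T \ltimes U$ with $U$ a split unipotent group (an iterated extension of additive groups, since everything is over a field). The quotient map $B \to B/U \cong T$ identifies $T$-equivariant objects on $X$ with $B$-equivariant objects on $X$ that are pulled back along $B \times X \to T \times X$; the claim is that \emph{every} $B$-equivariant local system arises this way. For the second equivalence, use the projection $\bar G \to \bar G/\bar B$, a smooth projective morphism with fibers the flag variety, and restrict along $\bar X \hookrightarrow \EGx \bar X$ appropriately; alternatively, and more cleanly, use that $\bar G/\bar B$ is connected so that restriction of local systems along a $\bar G$-orbit map is faithful, and use properness/connectedness to get essential surjectivity.

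The key technical input is the following \emph{homotopy invariance} statement for local systems: if $E \to X$ is a torsor under a split unipotent group $U$ (in particular a Zariski-locally trivial affine-space bundle), then pullback $\pi^* $ is an equivalence from $l$-adic local systems on $X$ to $l$-adic local systems on $E$. This reduces, by dévissage along the filtration of $U$ by normal subgroups with $\mathbb{G}_a$ quotients, to the case $U = \mathbb{G}_a$, where it follows because $\pi_1^{\mathrm{tame}}$ (and for $l$-adic local systems, the relevant prime-to-$p$ part) of the total space of an $\mathbb{A}^1$-bundle agrees with that of the base: $R\pi_* \underline{\bZ/l^\nu} = \underline{\bZ/l^\nu}$ since $H^*(\mathbb{A}^1_{\bar\bF}, \bZ/l^\nu)$ is trivial in positive degrees and the sheaves are locally constant by Proposition~\ref{prop:fibration}(ii). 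One then applies this with $E = \EGx[B] X$ versus $\EGx[T] X$: the simplicial map induced by $B \twoheadrightarrow T$ is, level by level, a tower of $\mathbb{A}^1$-bundles (a product of copies of $U$), so it induces an equivalence on the categories of cartesian local systems, which is exactly the asserted equivalence of equivariant local systems. The preservation of the weight and mixedness conditions built into the definition is automatic since the $\mathbb{A}^1$-bundle maps preserve weights (Proposition~\ref{prop:fibration}(iii) applied in both directions, or simply because pullback along a smooth morphism preserves pointwise weights).

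For the $(G\text{-equivariant}) \simeq (B\text{-equivariant})$ equivalence the geometry is different since $\bar G/\bar B$ is projective rather than affine, so I would argue via full faithfulness plus essential surjectivity rather than a single pullback equivalence. Full faithfulness: restriction of a $\bar G$-equivariant local system to a $\bar B$-equivariant one loses nothing because $\Hom$-groups in both categories are computed as $\bar G$- (resp. $\bar B$-) invariants of $\Hom$ on $\bar X$, and $\bar G$ and $\bar B$ have the same invariants on a $\bar G$-module that extends from $\bar B$ — more precisely, since $\bar G$ is connected and generated by $\bar B$ together with a Weyl-group representative (or since $\bar G/\bar B$ is connected and complete), a $\bar B$-equivariant structure on $\bar{\cL}$ that is compatible with the $\bar G$-action data on overlaps glues uniquely. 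Essential surjectivity: given a $\bar B$-equivariant local system, one descends it along $\bar G \times \bar X \to \bar G/\bar B \times \bar X$ (well-defined up to the issue that the action need not be trivial, which is handled by working on $\EGx \bar X$ throughout) using that $\bar G/\bar B$ is simply connected in the $l$-adic sense (it has no nontrivial finite étale covers, being a rational projective homogeneous space), so any local system on $\EGx[B]\bar X$ levelwise descends to $\EGx[G]\bar X$. I expect the simple-connectedness / cohomological-triviality facts about $\mathbb{A}^1$ and $G/B$ to be the conceptual core, and the main obstacle to be the bookkeeping needed to phrase all of this at the level of simplicial schemes $\EGx X$ so that ``equivalence of categories of cartesian local systems'' is literally the statement about equivariant local systems — in particular checking the cocycle/compatibility conditions (ii) in the definition are matched up correctly under the comparison maps, and verifying that the descent is along morphisms to which Proposition~\ref{prop:fibration} genuinely applies (smoothness, constancy on a Galois cover). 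Since $G$ is only assumed connected (not split), I would note that this poses no problem here because $B$ and $T$ over $\bar\bF$ are split automatically, and the statement is only asserted for the base-changed schemes over $\bar\bF$.
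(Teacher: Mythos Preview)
Your core topological inputs match the paper's: the prime-to-$p$ fundamental groups of $B/T$ (an affine space) and of $G/B$ vanish. The paper, however, packages both equivalences uniformly through Friedlander's \'etale homotopy theory: it produces fibration sequences of \'etale topological types
\[
(G/B)_{\et} \to (EB{\underset B\times}X)_{\et} \to (EG{\underset G\times}X)_{\et},
\qquad
(B/T)_{\et} \to (ET{\underset T\times}X)_{\et} \to (EB{\underset B\times}X)_{\et},
\]
extracts from them isomorphisms of prime-to-$p$ completed $\pi_1$'s, and then invokes the dictionary between $l$-adic local systems and continuous $l$-adic representations of these fundamental groups. The technical content is the construction of these fibration sequences: since the naive simplicial maps coming from the inclusions $B\hookrightarrow G$ and $T\hookrightarrow B$ are levelwise closed immersions rather than fibrations, the paper passes through an intermediary simplicial scheme such as $E(G\times B){\underset{G\times B}\times}(G\times X)$, shown to be weakly equivalent to $EB{\underset B\times}X$, whose map to $EG{\underset G\times}X$ \emph{is} levelwise a $G/B$-bundle.

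There is a genuine gap in your $B\simeq T$ step. The ``simplicial map induced by $B\twoheadrightarrow T$'' from $EB{\underset B\times}X$ to $ET{\underset T\times}X$ does not exist unless $U$ acts trivially on $X$: the face map encoding the action sends $(b,x)\mapsto b\cdot x$, and commuting with the projection $B\to T$ forces $b\cdot x = q(b)\cdot x$, i.e.\ $u\cdot x = x$ for all $u\in U$. So while the levelwise maps $B^n\times X\to T^n\times X$ are indeed affine-space bundles, they are not compatible with the simplicial structure, and the inflation functor you describe is simply not defined in general. Your $G\simeq B$ sketch has the analogous problem you partly anticipate: ``levelwise descent'' along a $G/B$-bundle is unavailable because the levelwise maps $B^n\times X\to G^n\times X$ are closed immersions, and your full-faithfulness argument via ``$G$- versus $B$-invariants of $\Hom$ on $X$'' understates the data carried by an equivariant structure. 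In both cases the paper's intermediary construction is exactly what supplies the missing fibration, after which simple-connectedness of the fiber finishes the job.
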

\begin{proof} 
Since $G$ is assumed to be connected, each connected component of $X$ is stable by $G$, so that
we may assume $X$ is also connected.
We will show below that there exists a fibration sequence of \'etale topological types
in the sense of \cite[Chapter 10]{Fr83}:
\be \begin{align}
     \label{fibr.1}
    (G/B)_{\et} \ra (EB{\underset B \times}X)_{\et} &{\overset {\alpha} \ra} (EG{\underset G \times}X)_{\et},\\
    (B/T)_{\et} \ra (ET{\underset T \times} X)_{\et} &{\overset {\beta} \ra} (EB {\underset B \times}X)_{\et} \notag
\end{align} \ee
where the subscript $et$ denotes the \'etale topological types. Recall this means the maps
$\alpha$ and $\beta$ are maps of an inverse system of pointed simplicial sets and that the
homotopy fiber of $\alpha$ (resp. $\beta$) is weakly equivalent in the sense of \cite[Definition 6.1]{Fr83}
to $(G/B)_{\et}$ (resp. $(B/T)_{\et}$). Assuming this, one obtains the exact sequences of pro-groups:
\vskip .3cm
$ \pi_1((G/B)_{\et}) \ra \pi_1((EB{\underset B \times}X)_{\et}) \ra \pi_1(EG{\underset G \times}X)_{\et}) \ra 1$ and
\vskip .3cm
$  \pi_1((B/T)_{\et}) \ra \pi_1((ET{\underset T \times}X)_{\et}) \ra \pi_1(EB{\underset B\times}X)_{\et}) \ra 1$.
\vskip .3cm \noindent
Since the flag variety $G/B$ is the same for the reductive quotient
$G/R_u(G)$, we may assume $G$ is reductive when considering $G/B$.  Therefore, $G/B$  lifts to characteristic
$0$ so that one may see $\pi_1((G/B)_{\et})\,{\widehat {}}=0$ where the completion is away from $p$.
Next observe that $B/T$ is an affine space. Therefore, its \'etale topological type completed
away from $p$ is trivial: this follows from the observation that the \'etale cohomology
of affine spaces are trivial with respect to locally constant torsion sheaves, with torsion prime
 to the
characteristic. (See \cite[Chapter Vi, Corollary 4.20]{Mi80}.)
Applying completion away from  $p$ to the above exact sequences and observing that
such a completion is right-exact (see for example, \cite[Chapter 3, 8.2]{BK72}) and that $\pi_1((G/B)_{\et}){\widehat {}}$ \, and 
$\pi_1((B/T)_{ \et}) {\widehat {}}$\,  are trivial provides the isomorphisms:
$$
\pi_1((EB{\underset B \times}X)_{\et})\, {\widehat {}} \simeq 
\pi_1((EG{\underset G \times}X)_{\et})\, {\widehat {}},
$$
$$
\pi_1((ET{\underset T \times} X)_{\et})\, {\hat{}} \simeq 
\pi_1(( EB {\underset B \times}X)_{\et})\, {\hat{}}.
$$ 
\vskip .3cm
Since $l$-adic equivariant local systems correspond to continuous $l$-adic 
representations of the above completed fundamental groups (see, for example, 
\cite[Appendix (A.3.3)]{Jo93}), the statements in the proposition follow. 
\vskip .3cm
Now we proceed to show that the first fibration sequences of \'etale topological types as in ~\eqref{fibr.1} exists assuming that one has a weak-equivalence:
$(EB{\underset B \times}X)_{\et} \simeq (EG{\underset G \times} G{\underset B \times} X)_{\et}$.
For this we observe first that  the inclusion $X \ra G{\underset B \times}X$ given by sending $x \mapsto (e, x)$
composed with the map $\pi: G{\underset B \times}X \ra X$ given by $(g, x) \mapsto gx$ is the identity.
Therefore, one may readily show that the homotopy fiber of the map $\pi_{\et}: (G{\underset B \times}X)_{\et} \ra X_{\et}$
is given by $(G/B)_{\et}$. Next we observe that 
\vskip .3cm
$(EG{\underset G \times}G{\underset B \times}X)_n = G^n \times  G{\underset B \times}X$ and that $( EG{\underset G \times}X)_n = G^n \times X$.
\vskip .3cm \noindent
Let $p:EG{\underset G \times}G{\underset B \times}X \ra EG{\underset G \times}X$ be the map induced by
$\pi$. Then the homotopy fiber of the map $(p_n)_{\et}$ also identifies with $(G/B)_{\et}$ for all $n \ge 0$.
(This follows from a K\"unneth formula for the \'etale topological types which may be readily deduced from 
\cite[Theorem 10.7]{Fr83}.)
Next observe (see \cite[Lemma 5.2]{Wa78}) that given a diagram of bi-simplicial sets 
$F_{\bullet, \bullet} \ra E_{\bullet, \bullet} \ra B_{\bullet, \bullet}$
so that for each fixed $n$, $F_{\bullet, n} \ra E_{\bullet, n} \ra B_{\bullet, n}$ is a fibration sequence
up to homotopy and  $B_{\bullet, n}$ is connected for each $n$, then the diagram 
$\Delta F_{\bullet, \bullet} \ra \Delta E_{\bullet, \bullet} \ra \Delta B_{\bullet, \bullet}$ is also 
a fibration sequence up to homotopy. Since each $(EG{\underset G \times}X)_n = G^n \times X$ is connected, so
is $\pi(U_{\bullet, n})$ where $U_{\bullet, \bullet}$ is an \'etale  hypercovering of $EG{\underset G \times}X$ and $\pi$ denotes the functor of connected components.
Therefore, one may apply  this with $B_{\bullet, \bullet}$ denoting the
bi-simplicial sets forming $(EG{\underset G \times}X)_{\et}$ and $E_{\bullet, \bullet}$ denoting the bi-simplicial
sets forming $(EG{\underset G \times} G{\underset B \times} X)_{\et}$. It follows that one obtains
the first fibration sequence of \'etale topological types appearing in ~\eqref{fibr.1} assuming that
one has a weak-equivalence 
$(EB{\underset B \times}X)_{\et} \simeq (EG{\underset G \times} G{\underset B \times} X)_{\et}$.
Next we proceed to sketch the existence of such a weak-equivalence.
\vskip .3cm
For this one needs the intermediary simplicial scheme $E(G\times B){\underset {(G\times B)} \times} (G \times X)$
where $G\times B $ acts on $G \times X$ by $(g_1, b_1)\circ (g, x) = (g_1gb_1^{-1}, b_1x)$. This simplicial
scheme maps to both $EB{\underset B \times}X$ and $EG{\underset G \times} G {\underset B \times}X$. 
The simplicial geometric fibers of the map to $EB{\underset B \times}X$ identify with $EG$ and the
simplicial geometric fibers of the map to $EG{\underset G \times} G {\underset B \times}X$ identify with
$EB$. Moreover an argument as in the last paragraph will show that the homotopy fibers of the corresponding
maps of the \'etale topological types are $(EG)_{\et}$ and $(EB)_{\et}$, \resp: clearly these are contractible
so that one obtains a weak-equivalence between
$(EB{\underset B \times}X)_{\et}$, $(E(G\times B){\underset {G \times B} \times} (G \times X))_{\et}$ and
$(EG{\underset G \times} G{\underset B \times X})_{\et}$. This completes the proof of the existence of the
first fibration sequence of \'etale topological types appearing in ~\eqref{fibr.1}.
A similar argument with $G$ (resp. $B$) replaced by $B$ (resp. $T$) proves the second fibration sequence 
of \'etale topological types appearing in ~\eqref{fibr.1} and completes the proof of the Proposition.
\end{proof} 

\subsection{Torsors}

Recall that a {\it torsor under an algebraic group} $G$ (also called a 
{\it principal $G$-bundle}) 
consists of a $G$-scheme $X$ together with a faithfully flat, $G$-invariant 
morphism of schemes $\pi : X \to Y$ such that the morphism 
$$
G \times X \longrightarrow X {\underset Y \times} X, 
\quad (g,x) \longmapsto (g \cdot x, x)
$$
is an isomorphism. Then $Y = X/G$ as topological spaces, and $\pi$ is the
quotient map. Moreover, $\pi$ is a locally trivial fibration with fiber $G$ 
(as follows from \cite[Lemme XIV 1.4]{Ra70}, since $G$ is assumed to be
linear). 

\begin{lemma}\label{lem:desc}
 Let $\pi:X \ra Y$ denote a torsor under an algebraic group $G$.
Then $\pi^*$ induces an equivalence of categories:
\vskip .3cm
($l$-adic local systems on $Y) \simeq (G$-equivariant $l$-adic local systems on $X$). 
\end{lemma}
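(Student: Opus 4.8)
The plan is to exhibit mutually inverse functors. In one direction, given a $G$-equivariant $l$-adic local system $\cL = \{\cL_\nu(m)\}$ on $\EGx X$, its restriction to $X = (\EGx X)(0)$ carries a descent datum with respect to $\pi$ — precisely because the torsor identification $G\times X \cong X\times_Y X$ converts the cocycle condition on $\EGx X$ (condition (ii) in the definition of equivariant local system) into a genuine descent datum for the faithfully flat morphism $\pi$. Since each $\cL_\nu$ is a constructible $\bZ/l^\nu$-sheaf that is locally constant on $X_\et$, faithfully flat (in fact, since $\pi$ is a locally trivial fibration, étale-locally trivial) descent for the étale site gives a locally constant constructible sheaf $\cM_\nu$ on $Y_\et$ with $\pi^*\cM_\nu \cong \cL_\nu$; these assemble into an $l$-adic local system $\cM$ on $Y$. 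Conversely, $\pi^*$ sends an $l$-adic local system on $Y$ to one on $\EGx X$: pulling back along the projection $(\EGx X)(m) = G^m\times X \to X \xrightarrow{\pi} Y$ produces the sheaves $\cL_\nu(m) = \pr^*\cL_\nu$, and the simplicial identities are automatic because all face and degeneracy maps cover $\id_Y$. First I would check these two constructions are well-defined on morphisms and are visibly quasi-inverse: $\pi^*$ followed by restriction-plus-descent recovers the original local system on $Y$ by the uniqueness clause in descent, and the other composite recovers the equivariant structure because an equivariant local system is determined by its restriction to $X$ together with the descent datum, which is exactly what was used.

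The only subtlety is that everything takes place over $\bar\bF$, so I would work throughout with $\bar X \to \bar Y$ and base-changed sheaves, noting that $\pi$ remains a torsor after base extension and that the Frobenius-compatibility is not part of the claim (the lemma is about local systems on the schemes over $\bar\bF$). The reduction to a single level $\nu$ is harmless since an $l$-adic sheaf is an inverse system and all constructions are compatible with the transition maps $\cL_{\nu+1}\to\cL_\nu$.

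The main obstacle — really the only point requiring care — is justifying étale descent for \emph{locally constant constructible} sheaves along $\pi$ and matching the equivariant cocycle condition with the descent datum. Here I would use that $\pi$ is a locally trivial fibration with fiber $G$ (as recorded just before the lemma, via \cite[Lemme XIV 1.4]{Ra70}): there is an étale cover $Y'\to Y$ over which $X$ becomes $Y'\times G$, so $\pi$ is even étale-locally a product projection, and effectivity of descent for constructible sheaves on the étale site is classical (as in \cite[Chapter Vi]{Mi80}). One then checks on the chart that the two projections $X\times_Y X \rightrightarrows X$, identified via $G\times X \cong X\times_Y X$ with $(g,x)\mapsto g\cdot x$ and $(g,x)\mapsto x$, carry the face maps $d_0,d_1\colon (\EGx X)(1)\to(\EGx X)(0)$, so that condition (ii) of the equivariant structure for the single morphism $d_1^{-1}d_0$ is exactly the datum $\mathrm{pr}_1^*\cL_\nu \xrightarrow{\sim} \mathrm{pr}_2^*\cL_\nu$, and the higher simplicial identities encode its cocycle property. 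I do not expect this to be hard, only notation-heavy; I would present it as a short verification rather than belabor the simplicial bookkeeping.
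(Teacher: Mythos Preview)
Your proposal is correct and follows exactly the approach of the paper, which simply observes that the conditions for a sheaf to be $G$-equivariant correspond to descent data for the map $\pi$, so the conclusion follows from descent theory. You have spelled out the details of this identification and the effectivity of descent, but the underlying argument is the same.
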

\begin{proof} The conditions for a sheaf to be $G$-equivariant correspond to descent data
for the map $\pi$. Therefore, the conclusion follows by descent theory.
\end{proof}

We will need the following result, which is probably known but for which we could not 
locate any reference.

\begin{lemma}
\label{lem:tor}
Let $\pi : X \to Y$ denote a torsor under an algebraic group $G$, and 
$H \subset G$ a closed subgroup. Then $\pi$ factors uniquely as 
$$
\CD X @>{\varphi}>> Z @>{\psi}>> Y = X/G \endCD
$$ 
where $\varphi$ is an $H$-torsor, and $\psi$ is a locally trivial fibration
with fiber $G/H$.
 
If $H$ is a normal subgroup of $G$, then the quotient algebraic group 
$G/H$ acts on $Z$, and $\psi$ is a $G/H$-torsor. 

In particular, there is a unique factorization  
$$
\CD X @>{\varphi}>> Z = X/G^o @>{\psi}>> Y = X/G \endCD
$$ 
where $\varphi$ is a $G^o$-torsor and $\psi$ is a Galois cover with group $G/G^o$.
\end{lemma}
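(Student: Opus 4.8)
The plan is to construct the factorization by forming the quotient $Z := X/H$ and checking the two asserted properties. First I would observe that, since $\pi$ is a torsor, $X$ is covered by $\pi$-invariant open subschemes over which $\pi$ is the trivial $G$-bundle $U \times G \to U$; on such a chart the quotient by $H$ is visibly $U \times (G/H)$, and the map to $U$ is the trivial $G/H$-fibration. Hence the quotient $X/H$ exists as a scheme (one glues these local models along the transition maps, which lie in $G(-)$ and hence descend to the $G/H$-bundle), and it comes with a map $\varphi : X \to Z$ which is locally $U \times G \to U \times (G/H)$, i.e. locally the projection of a trivial $H$-torsor, together with a map $\psi : Z \to Y$ which is locally $U \times (G/H) \to U$. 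The local triviality of $\varphi$ as an $H$-torsor and of $\psi$ as a fibration with fiber $G/H$ then follows from these local models, after noting that $G \to G/H$ is itself a (Zariski-locally trivial, by \cite[Lemme XIV 1.4]{Ra70}, or at worst étale-locally trivial) $H$-torsor so that $U \times G \to U \times (G/H)$ genuinely is an $H$-torsor.

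Uniqueness: suppose $X \xrightarrow{\varphi'} Z' \xrightarrow{\psi'} Y$ is another such factorization with $\varphi'$ an $H$-torsor. Since $\varphi'$ is $G$-invariant under the $H$-action and its fibers are exactly the $H$-orbits in $X$, the universal property of the categorical quotient $X/H$ (which, being a torsor quotient, is a geometric and uniform quotient) produces a unique $Y$-morphism $Z \to Z'$; symmetrically one gets $Z' \to Z$, and the two compositions are the identity by the universal property again. So the factorization is unique.

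For the normal case, $G/H$ is an algebraic group and acts on $Z = X/H$ because the $G$-action on $X$ descends: the $H$-action and the $G$-action commute up to the conjugation action of $G$ on $H$, but modulo $H$ this conjugation is trivial, so $G/H$ acts on $Z$. To see $\psi : Z \to Y$ is a $G/H$-torsor, I would check the torsor condition $(G/H) \times Z \xrightarrow{\sim} Z \times_Y Z$ on the local charts, where it reads $(G/H) \times U \times (G/H) \to U \times (G/H) \times (G/H)$, $(\bar g, u, \bar h) \mapsto (u, \bar g \bar h, \bar h)$, which is an isomorphism; faithful flatness and $G/H$-invariance are likewise local. The last display is then the special case $H = G^o$: here $G^o$ is normal, $G/G^o$ is the finite (constant, after base change) group of components, so $\psi$ is a $G/G^o$-torsor, i.e. a Galois étale cover with group $G/G^o$ (finite étale because a torsor under a finite group is finite étale).

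The main obstacle I anticipate is purely a matter of foundations rather than ideas: making sure the quotient $X/H$ really exists as a \emph{scheme} of finite type over $\bF_q$ and that the gluing of the local models is legitimate — i.e. that descent along the étale (or Zariski, by Raynaud's lemma since $G$ is linear) trivializing cover of $\pi$ is effective for the would-be $G/H$-bundle $Z$. Once local triviality of $\pi$ is invoked in the form $X|_U \cong U \times G$, everything else is a routine check on the explicit local models $U\times G \to U\times(G/H)\to U$, and the two universal-property arguments for uniqueness and for the descent of the $G/H$-action.
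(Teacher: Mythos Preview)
Your approach is correct in outline but differs from the paper's, and you have correctly put your finger on the one real difficulty. Let me flag one confusion and then compare.

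First, a correction: Raynaud's lemma (\cite[Lemme XIV 1.4]{Ra70}) does \emph{not} give Zariski-local triviality; it only says that a torsor under an affine group scheme is locally trivial in the \'etale topology. So your opening sentence ``$X$ is covered by $\pi$-invariant open subschemes over which $\pi$ is trivial'' is not right as stated; you must work over an \'etale cover $Y' \to Y$, form $Z' := Y' \times (G/H)$, equip it with descent data coming from the $G$-valued cocycle of the torsor, and then argue that this descent is effective. This is exactly the ``main obstacle'' you flag at the end. It is surmountable---effectivity holds because $G/H$ carries a $G$-equivariant ample line bundle (Chevalley's theorem realizes $G/H$ as an orbit in some $\bP(V)$), so $Z' \to Y'$ is quasi-projective with a relatively ample bundle compatible with the descent data---but this is the substance of the existence argument, not a side technicality.

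The paper takes a different route that sidesteps the descent step. It first reduces to $Y$ affine by gluing over a Zariski open cover of $Y$ (this gluing is unproblematic, by the uniqueness part); then $X$ is affine too. Now it constructs the \emph{associated bundle} $Z := X \times_G (G/H)$ directly as a quotient: the diagonal $G$-action on $X \times G/H$ admits a relatively ample $G$-linearized line bundle over $X$ (again via Chevalley), and \cite[Proposition 7.1]{MFK94} then guarantees that the quotient $\varpi : X \times G/H \to X \times_G (G/H)$ exists as a quasi-projective scheme and is a $G$-torsor. The map $\varphi$ is the composite $X = X \times H/H \hookrightarrow X \times G/H \xrightarrow{\varpi} Z$, and $\psi$ is the descent of the second projection $X \times G/H \to X$ through the two $G$-torsors $\varpi$ and $\pi$; the torsor property of $\varphi$ is then checked after an \'etale base change on $Y$ trivializing $X$, exactly as in your local picture. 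So both arguments ultimately rest on the quasi-projectivity of $G/H$, but the paper packages the existence question into a single citable GIT result rather than an effective-descent statement.

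Your uniqueness argument (via the universal property of the $H$-quotient) and your treatment of the normal case are fine and agree with the paper's, which for uniqueness simply notes that $Z$ is forced to be $X/H$ as a ringed space, and for the remaining assertions again reduces to the trivial torsor over an \'etale cover.
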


\begin{proof}
For the uniqueness of the factorization, note that $Z = X/H$ as topological
spaces, and the structure sheaf $\cO_Z$ is the subsheaf of $H$-invariants 
in $\varphi_*\cO_X$.

For the existence, we may assume that $Y$ is affine: indeed, if we have 
factorizations $X_i \to Z_i \to Y_i$ where $(Y_i)$ is a covering of $Y$ by open 
affine subschemes, then they may be glued to a global factorization, 
by uniqueness. Now $X$ is affine, since so is the morphism $\pi$. Thus,
the product $X \times G/H$ is quasi-projective. Moreover, the projection 
$p_2 : X \times G/H \to X$ admits a relatively ample $G$-linearized 
invertible sheaf (indeed, by a classical theorem of Chevalley, 
the homogeneous space $G/H$ is isomorphic to a $G$-orbit in the 
projectivization $\bP(V)$, where $V$ is a finite-dimensional rational 
representation of $G$. So the pull-back of the invertible sheaf 
$\cO_{\bP(V)}(1)$ yields an ample, $G$-linearized invertible sheaf on $G/H$). 
By \cite[Proposition 7.1]{MFK94}, it follows that the quotient morphism 
$\varpi : X \times G/H \to X {\underset G \times} G/H$ 
exists and is a $G$-torsor, where $X  {\underset G \times} G/H$ is a 
quasi-projective scheme. Moreover, the $G$-equivariant morphism $p_2$ descends 
to a morphism $\psi : X {\underset G \times} G/H \to Y$ such that the
square 
$$
\CD
X \times G/H @>{p_2}>> X \\
@V{\varpi}VV @V{\pi}VV \\
X {\underset G \times} G/H @>{\psi}>> Y\\
\endCD
$$ 
is cartesian. We define $\varphi$ as the composite map 
$$
\CD 
X \times H/H @>{i}>> X \times G/H @>{\varpi}>> X \times_G G/H 
\endCD,
$$
where $i$ denotes the natural inclusion. Since $i$ yields a section of $p_2$,
we have $\pi = \psi \circ \varphi$. 

To show that $\varphi$ is an $H$-torsor, we may perform an \'etale base change 
on $Y$, and hence assume that $X \simeq G \times Y$ as torsors over $Y$. 
Then $Z \simeq G/H \times Y$ over $Y$, and the statement is obvious.
The remaining assertions are proved along similar lines.
\end{proof}

\subsection{Weak and strong purity}

Given a $G$-scheme $X$ as above, let $\C$ denote a class (or collection) of 
$G$-equivariant $l$-adic local systems $\cL = \{\cL_{\nu}|\nu\}$ on $X$ such 
that the following hypotheses hold: 

\medskip

\noindent
(i) Each local system $\cL$ is mixed and of weight $\ge w$ for some non-negative 
integer $w$.

\medskip

\noindent
 (ii) The class $\C$ is closed under extensions.

\medskip

\noindent
(iii) $\C$ contains the local system $\bQ_l$.

\begin{definition}
\label{def.weak-strong-purity}

{\rm (i)} 
A finite-dimensional $\bQ_l$-vector space $V$ provided with an endomorphism $F$ 
will be called {\it strongly pure}, if each eigenvalue $\alpha$ of $F$ 
(in the algebraic closure $\bar{\bQ}_l$) 
satisfies $\alpha = q^j$ for some integer $j = j(\alpha) \geq 0$.

We will say that the pair $(V,F)$ is {\it weakly pure} if each eigenvalue 
$\alpha$ of $F$ satisfies $\alpha = \zeta q^j$ for some root of unity 
$\zeta = \zeta(\alpha)$ and some integer $j = j(\alpha) \geq 0$.
Equivalently, $\alpha^n = q^{jn}$ for some positive integer $n$.

\medskip
 
{\rm (ii)}
Given a $G$-scheme $X$ and a class of $G$-equivariant local systems $\C$ on $X$
satisfying the above hypotheses, 
we will say that $X$ is {\it weakly (strongly) pure} with respect to the class 
$\C$, if the cohomology space $H^*_{\et}(\bar{X}, \bar {\cL})$, provided with the action
of the Frobenius $F$, is weakly (strongly, \res) pure for each $\cL \in \C$.
 
We will say $X$ is {\it weakly (strongly) pure with respect to $\bQ_l$} 
if the above hypotheses hold for the action of the trivial group and for the 
class generated by the constant $l$-adic local system $\bQ_l$.
\end{definition} 

\begin{remarks}\label{rem:wsp} 

{\rm (i)} The notion of weak purity generalizes that of \cite[Definition 3.2]{BP10},
which is in fact equivalent to the above definition of weak purity for the 
constant local system. However, the notion of strong purity of [loc.~cit.]
differs from the above notion (again, for the constant local system): 
rather than requiring that all eigenvalues of $F$ acting on $H^*_{\et}(\bar{X},\bQ_l)$ 
are integer powers of $q$, it requires that each eigenvalue $\alpha$
of $F$ acting on $H^i_{\et}(\bar{X},\bQ_l)$ satisfies $\alpha^n = q^{in/2}$ for some 
positive integer $n$.

\medskip

{\rm (ii)} The notions of weak and strong purity are stable by extension of 
the base field $\bF_{q}$ to $\bF_{q^n}$ (which replaces the Frobenius endomorphism 
$F$ by $F^n$, and its eigenvalues by their $n$-th powers). 

Specifically, if $X$ is strongly pure over $\bF_{q}$, then so are the base 
extensions $X_{\bF_{q^n}}$ for all $n \geq 1$. Also, $X$ is weakly pure if and only 
if $X_{\bF_{q^n}}$ is weakly pure for some $n$; then $X_{\bF_{q^n}}$ is weakly pure 
for all $n$. In particular, weak purity is independent of the base field $\bF$.

\medskip

{\rm (iii)} For this reason,  {\it we will often write $X$ for $X_{\bar{F}}$ or 
$\bar{X}$, when there is no cause of confusion as to what is intended. Also, 
since we only work with \'etale cohomology, the subscript et will always be 
omitted. As an example, we will denote $H^*_{\et}(\bar{X},\bar{\cL})$ by 
$H^*(X,\cL)$.}

\medskip

{\rm (iv)} If $X$ is smooth and weakly (strongly) pure with respect to $\bQ_l$, then 
so is $H^*_c(X,\bQ_l)$ by Poincar\'e duality. In view of the trace formula
(\ref{eqn:trace}), it follows that the counting function
$n \mapsto \vert X(\bF_{q^n}) \vert$ is a periodic polynomial with integer coefficients
(a polynomial with integer coefficients, \res).

\medskip

{\rm (v)}
Recall that a scheme $X$ is said to be {\it pure}, if all the complex conjugates 
of eigenvalues of $F$ on $H^i(\bar{X}, \bar \bQ_l)$ have absolute value $q^{i/2}$: see 
\cite{De74a, De80}. It may be important to observe that a scheme $X$ may be weakly pure 
without being pure or strongly pure in the above sense: for example, split tori. 
Clearly, for a scheme $X$ that is pure, a necessary condition for strong purity 
is that all the odd $l$-adic cohomology vanishes. Typical examples of schemes that 
are strongly pure are those projective smooth varieties that are stratified by 
affine spaces.

\end{remarks}

We now record some basic properties of these notions:

\begin{lemma}
\label{lem:vec}
{\rm (i)} Let $0 \to V' \to V \to V'' \to 0$ denote a short exact sequence of 
finite-dimensional $\bQ_l$-vector spaces provided with compatible endomorphisms 
$F_{V'},F_V,F_{V''}$. Then $(V,F_V)$ is weakly pure (strongly pure) if and only if 
$(V',F_{V'})$ and $(V'',F_{V''})$ are weakly pure (strongly pure, \res).

\medskip

{\rm (ii)} Given a finite-dimensional $\bQ_l$-vector space $V$ provided with an 
endomorphism $F$, there exists a canonical decomposition 
$V = V_{wp} \oplus V_{wi}$ 
into $F$-stable subspaces, where $V_{wp}$ is the maximal subspace of $V$ 
which is weakly pure. A similar decomposition holds with weakly pure replaced 
by strongly pure.

\medskip

{\rm (iii)} Given pairs $(V,F_V)$ and $(W,F_W)$ that are weakly pure 
(strongly pure), the pair $(V \otimes W, F_V \otimes F_W)$ is also weakly pure 
(strongly pure, \res).
\end{lemma}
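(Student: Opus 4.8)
The plan is to handle the three parts in order, reducing everything to elementary linear algebra over $\bar{\bQ}_l$ together with the arithmetic of the set of scalars $S_{wp} = \{\zeta q^j : \zeta \text{ a root of unity}, j \in \bZ_{\ge 0}\}$ (resp.\ $S_{sp} = \{q^j : j \in \bZ_{\ge 0}\}$). The key observation I would record first is that a pair $(V,F)$ is weakly pure if and only if every eigenvalue of $F$ lies in $S_{wp}$, and strongly pure if and only if every eigenvalue lies in $S_{sp}$; this is literally the definition, but it makes clear that the notions depend only on the multiset of eigenvalues of $F$ (counted without multiplicity even).

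For part (i): the eigenvalues of $F_V$ on $V$ are precisely the union, with multiplicity, of the eigenvalues of $F_{V'}$ on $V'$ and of $F_{V''}$ on $V''$. This is a standard fact about a short exact sequence of vector spaces with compatible endomorphisms — choose a basis of $V$ extending one of $V'$ so that $F_V$ is block upper-triangular with diagonal blocks the matrices of $F_{V'}$ and $F_{V''}$, whence the characteristic polynomial factors as $\chi_{F_V} = \chi_{F_{V'}} \cdot \chi_{F_{V''}}$. Since the set of eigenvalues of $F_V$ is the union of those of $F_{V'}$ and $F_{V''}$, all eigenvalues of $F_V$ lie in $S_{wp}$ (resp.\ $S_{sp}$) precisely when all eigenvalues of each of $F_{V'}$ and $F_{V''}$ do. This proves both directions of (i) simultaneously for weak and for strong purity.

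For part (ii): decompose $V = \bigoplus_{\alpha} V_{(\alpha)}$ into generalized eigenspaces of $F$ over $\bar{\bQ}_l$ (these are $F$-stable, though a priori defined over $\bar{\bQ}_l$). Set $V_{wp}$ to be the sum of those generalized eigenspaces $V_{(\alpha)}$ with $\alpha \in S_{wp}$, and $V_{wi}$ the sum of the rest; one checks $V_{wp}$ and $V_{wi}$ are in fact defined over $\bQ_l$ because the partition of the eigenvalues into ``in $S_{wp}$'' versus ``not in $S_{wp}$'' is stable under the Galois action $\mathrm{Gal}(\bar{\bQ}_l/\bQ_l)$ — indeed $S_{wp}$ is a Galois-stable set of algebraic numbers, being characterized by $\alpha^n = q^{jn}$ for some $n$, a condition preserved by field automorphisms. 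Then $V_{wp}$ is weakly pure by construction, and it is maximal: any $F$-stable weakly pure subspace has all its eigenvalues in $S_{wp}$, hence is contained in the span of the corresponding generalized eigenspaces, i.e.\ in $V_{wp}$. Canonicity is clear since the construction used only $F$. The strongly pure version is identical with $S_{wp}$ replaced by $S_{sp}$ (which is likewise Galois-stable).

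For part (iii): the eigenvalues of $F_V \otimes F_W$ on $V \otimes W$ are exactly the products $\alpha\beta$ where $\alpha$ runs over eigenvalues of $F_V$ and $\beta$ over eigenvalues of $F_W$ (pass to $\bar{\bQ}_l$, put both operators in upper-triangular form, and take the tensor product of the flags). So it suffices to observe that $S_{wp}$ and $S_{sp}$ are closed under multiplication: $(\zeta_1 q^{j_1})(\zeta_2 q^{j_2}) = (\zeta_1\zeta_2) q^{j_1+j_2} \in S_{wp}$, and $q^{j_1} q^{j_2} = q^{j_1 + j_2} \in S_{sp}$. Hence $V \otimes W$ is weakly (resp.\ strongly) pure.

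I do not expect any serious obstacle here; the one point requiring a little care is the rationality claim in (ii), i.e.\ that the projectors onto $V_{wp}$ and $V_{wi}$ are defined over $\bQ_l$ rather than merely over $\bar{\bQ}_l$, and this follows from Galois-stability of $S_{wp}$ and $S_{sp}$ as indicated.
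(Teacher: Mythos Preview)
Your proof is correct and follows essentially the same approach as the paper: parts (i) and (iii) are identical in spirit (eigenvalues of an extension are the union, eigenvalues of a tensor product are the products), and for (ii) your generalized-eigenspace decomposition over $\bar{\bQ}_l$ with Galois descent is equivalent to the paper's factorization of the minimal polynomial $P_F = P_{wp} P_{wi}$ over $\bQ_l$ and the splitting $V = \Ker(P_{wp}(F)) \oplus \Ker(P_{wi}(F))$, since $\Ker(P_{wp}(F))$ is exactly the sum of the generalized eigenspaces with eigenvalue in $S_{wp}$.
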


\begin{proof} 
For (i), just note that the set of eigenvalues of $F_V$ in $V$ is the union of the
corresponding sets for $(V',F_{V'})$ and $(V'',F_{V''})$.

For (ii), consider the minimal polynomial $P_F$ of the endomorphism $F$, 
that is, the polynomial $P(t) \in \bQ_l[t]$ of smallest degree such that 
$P(F) = 0$ and that $P$ has leading term $1$. Recall that the eigenvalues 
of $F$ are exactly the roots of $P_F$. We have a unique factorization 
$P_F(t) = P_{wp}(t) P_{wi}(t)$ in $\bar{\bQ_l}[t]$, where $P_{wp}(t)$ denotes the 
largest factor of $P_F(t)$ with roots the weakly pure eigenvalues.
Since the set of these eigenvalues is stable under the action of the Galois 
group of $\bar{\bQ}_l$ over $\bQ_l$, we see that both $P_{wp}(t)$ and $P_{wi}(t)$ 
have coefficients  in $\bQ_l$. Also, since $P_{wp}(t)$ and $P_{wi}(t)$ are coprime, 
we have 
$$
V = \Ker\big(P_{wp}(F)\big) \oplus \Ker\big(P_{wi}(F)\big)
$$
which yields the desired decomposition in the weakly pure case.
The strongly pure case is handled by the same argument.
  
Finally, (iii) follows from the fact that the eigenvalues of 
$F_V \otimes F_W$ are exactly the products $\alpha \beta$, where
$\alpha$ is an eigenvalue of $F_V$, and $\beta$ of $F_W$.
\end{proof}

\begin{remark}
In particular, weak and strong purity are stable by negative Tate twists in the following 
sense. Given an integer $n \geq 0$, we let $\Q_l(-n)$ denote the $\bQ_l$-vector space 
of dimension $1$ provided with the action of the Frobenius $F$ by multiplication with 
$q^n$. Let $V$ denote a $\bQ_l$ -vector space provided with an endomorphism $F_V$. 
Then $(V \otimes \bQ_l, F_V \otimes F)$ is weakly pure (strongly pure) if $(V, F)$ is 
weakly pure (strongly pure, \res).
\end{remark}

We now come to the main result of this section:

\begin{theorem} 
\label{thm:main1}
Let $\pi : X \to Y$ denote a torsor for the action of an algebraic group $G$. 
Let $\C_Y$ denote a class of $G$-equivariant $l$-adic local systems on $Y$, and $\C_X$ a 
class of $G$-equivariant $l$-adic local systems on $X$.

(i) Suppose $\C_X \supseteq \pi^*(C_Y)$.
Then if $X$ is  weakly pure with respect to $\C_X$  so is $Y$
with respect to $\C_Y$. In case  $G$ is split, and if  $X$ is strongly pure with 
respect to $\C_X$, then so is $Y$ with respect to $\C_Y$.  

(ii) Suppose $G$ is also connected and $\C_X= \pi^*(\C_Y)$. Then if $Y$ is weakly pure with
respect to $\C_Y$, so is $X$ with respect to $\C_X$. In case $G$ is split and if $Y$ is strongly pure
with respect to $\C_Y$, so is $X$ with respect to $\C_X$.
\end{theorem}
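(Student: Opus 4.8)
The idea is to factor $\pi$, by repeated use of Lemma~\ref{lem:tor}, into a chain of elementary torsors and fibrations, to analyse each factor, and to concatenate. By Lemma~\ref{lem:desc}, $\pi^*\cL$ is the $G$-equivariant local system on $X$ attached to $\cL\in\C_Y$, and on every quotient $X/H$ the relevant local system is pulled back from $Y$; so it always suffices to compare $H^*(X,\pi^*\cL)$ with $H^*(Y,\cL)$ for one $\cL\in\C_Y$. In part~(i) one first reduces to $G$ connected: by Lemma~\ref{lem:tor}, $\pi$ factors as $X\to X/G^o\xrightarrow{\psi}Y$ with $\psi$ a Galois cover with finite group $\Gamma=G/G^o$; since $\Gamma$ is defined over $\bF$ it commutes with $F$, so the $\Gamma$-isotypic decomposition $\psi_*\bQ_l=\bQ_l\oplus\cN$ is $F$-stable, $H^*(Y,\cL)$ is an $F$-stable direct summand of $H^*(X/G^o,\psi^*\cL)$, and Lemma~\ref{lem:vec}(i) reduces us to the $G^o$-torsor $X\to X/G^o$. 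Assume now $G$ connected, fix a Borel subgroup $B\supseteq T$ defined over $\bF$ with unipotent radical $U=R_u(B)$, so $B/U=T$, and apply Lemma~\ref{lem:tor} twice to factor
$$
X\ \longrightarrow\ X/U\ \longrightarrow\ X/B\ \longrightarrow\ Y,
$$
where $X\to X/U$ is a $U$-torsor, $X/U\to X/B$ a $T$-torsor, and $\rho\colon X/B\to Y$ a locally trivial fibration with fiber the flag variety $G/B$.

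The unipotent factor is harmless: over the perfect field $\bF$ the group $U$ is split, so $X\to X/U$ is a tower of locally trivial $\bA^1$-fibrations, each of which induces an isomorphism on $H^*$ with pulled-back coefficients (Proposition~\ref{prop:fibration}, passing to the $l$-adic limit); thus $H^*(X,\pi^*\cL)\cong H^*(X/U,-)$, compatibly with all the purity notions. For the torus factor, assume first $T$ is split, $T\cong\bG_m^r$, and factor $X/U\to X/B$ as a tower of $\bG_m$-torsors. Each $\bG_m$-torsor $p\colon Z\to W$ is the complement of the zero section of a line bundle $L$ on $W$ defined over $\bF$, and for any local system $\cM$ on $W$ there is a Gysin exact sequence
$$
\cdots\to H^{i-2}(W,\cM)(-1)\xrightarrow{\ \cup c_1(L)\ }H^i(W,\cM)\to H^i(Z,p^*\cM)\to H^{i-1}(W,\cM)(-1)\to\cdots,
$$
so $H^i(Z,p^*\cM)$ is an extension of a subobject of $H^{i-1}(W,\cM)(-1)$ by a quotient of $H^i(W,\cM)$. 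By Lemma~\ref{lem:vec} and stability of weak and strong purity under negative Tate twists, $H^*(Z,p^*\cM)$ is pure whenever $H^*(W,\cM)$ is; conversely, if $H^*(Z,p^*\cM)$ is pure then so is $H^*(W,\cM)$, by induction on $i$, since the image of $\cup c_1(L)$ in $H^i(W,\cM)$ is a quotient of the already-handled $H^{i-2}(W,\cM)(-1)$ while the cokernel is pure. Iterating over the $r$ factors compares $H^*(X/U,-)$ with $H^*(X/B,-)$ in both directions, preserving both purity notions. If $T$ is not split it becomes split over some $\bF_{q^n}$: for the weak-purity statements we base-change there and invoke Remarks~\ref{rem:wsp}(ii), and for the strong-purity statements $G$, hence $T$, is split by hypothesis.

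Finally, for the flag bundle $\rho\colon X/B\to Y$: since $G$ is connected it acts trivially on $H^*(G/B,\bQ_l)$, so $R^q\rho_*\bQ_l$ is the constant sheaf $H^q(G/B,\bQ_l)$, which by the Bruhat decomposition vanishes for $q$ odd, equals $\bQ_l$ for $q=0$, and for $q=2i$ is a sum of copies of $\bQ_l(-i)$ when $G$ is split (and a permutation twist of this, hence weakly pure, in general). The line bundles on $X/B$ associated to the $\bF$-rational characters of $B$ restrict on geometric fibers to homogeneous line bundles whose Chern classes generate $H^*(G/B,\bQ_l)$, so Leray--Hirsch gives an $F$-equivariant isomorphism
$$
H^n(X/B,\rho^*\cL)\ \cong\ \bigoplus_i H^{n-2i}(Y,\cL)\otimes_{\bQ_l}H^{2i}(G/B,\bQ_l).
$$
The summand $i=0$ exhibits $H^*(Y,\cL)$ as an $F$-stable direct summand of $H^*(X/B,\rho^*\cL)$, so weak (resp. strong, when $G$ is split) purity of the latter passes to the former; conversely, each $H^{n-2i}(Y,\cL)\otimes H^{2i}(G/B,\bQ_l)$ is weakly pure whenever $H^{n-2i}(Y,\cL)$ is, by Lemma~\ref{lem:vec}(iii), and strongly pure when $G$ is split (the twists being integer powers of $q$). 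Concatenating the three steps proves both implications of (i) and (ii). I expect this last step to be the main obstacle: it requires identifying $H^*(G/B)$ together with its Frobenius action, checking that $R^q\rho_*\bQ_l$ is constant, and producing the $F$-equivariant Leray--Hirsch decomposition from the tautological line bundles on $X/B$; the Tate-twist bookkeeping in the $\bG_m$-tower and the passage to a splitting field for non-split tori are the other points needing care.
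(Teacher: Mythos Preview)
Your proof is correct and follows essentially the same strategy as the paper: decompose $\pi$ into a unipotent torsor, a torus torsor, a flag-variety fibration, and (for part~(i)) a finite Galois cover, then analyse each piece via, respectively, homotopy invariance, the Gysin long exact sequence for $\bG_m$-torsors with induction on the cohomological degree, and Leray--Hirsch for the $G/B$-bundle. The only cosmetic difference is the precise factorization: the paper passes first through $X/R_u(G)$ and then works with the reductive quotient $G'=G^o/R_u(G)$ (so the affine and torus steps are split into $\pi_1,\pi_2,\pi_3$), whereas you quotient directly by $R_u(B)$ and then by $T$, reaching $X/B$ in two steps instead of three; the paper also invokes Deligne's degeneracy criterion for the Leray spectral sequence of the flag bundle before separately verifying that $R^t\pi_{4*}\bQ_l$ is constant, while your Leray--Hirsch argument handles both at once.
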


It may be important to point out that the second statement in the above theorem applies 
to the class $\C_X$ of $G$-equivariant $l$-adic local systems on $X$ obtained as 
split summands of $\rho_{X*}({\underline \Q}_l^{\oplus ^n})$ for some $n >0$, where
$\rho_X: X' \ra X$ is a $G$-equivariant finite \'etale map and ${\underline \Q}_l^{\oplus ^n}$ is 
the constant local system of rank $n$ on $X'$, and to the similarly defined class $\C_Y$.
Indeed, taking $X' = X{\underset Y \times}Y'$, the proper base change theorem applied 
to the map $\rho_Y$ shows that 
$\pi^*(\rho_{Y*}({\underline \Q}_l^{\oplus ^n})) = \rho_{X*}({\pi'}^*({\underline \Q}_l^{\oplus ^n})),$ 
where $\pi':X' \ra Y'$ is the map induced by $\pi$. Therefore, 
$\C_X$ contains $\pi^*(\C_Y)$ in this case.

\medskip

The proof of Theorem \ref{thm:main1} consists in decomposing $\pi$ into a sequence of 
torsors under particular groups, and of fibrations with particular fibers, and in
examining each step separately. Specifically, choose a Borel subgroup 
$B \subseteq G$ and a maximal torus $T \subseteq B$, and denote by 
$R_u(G)$ the unipotent radical of $G$. Then $R_u(G) \subset B \subset G^0$.
Moreover, $G' := G^o/R_u(G)$ is a connected reductive group with Borel subgroup 
$B' := B/R_u(G)$ and maximal torus $T' \subseteq B'$, isomorphic to $T$
via the quotient map $G \to G'$. Now $\pi$ factors as the composition of 
the following maps: 

\medskip

\noindent
(i) $\pi_1: X \to X' := X/R_u(G)$, a torsor under $R_u(G)$ 
and hence a locally trivial fibration with fiber an affine space,
\medskip

\noindent
(ii) $\pi_2: X' \to X'/T'$, a torsor under $T' \simeq T$,

\medskip

\noindent
(iii) $\pi_3: X'/T' \to X'/B' \simeq X/B$, a locally trivial fibration
with fiber $B'/T' \simeq R_u(B')$, an affine space again,

\medskip

\noindent
(iv) $\pi_4 : X'/B' \to X'/G' \simeq X/G^o$, a locally trivial fibration
with fiber $G'/B' \simeq G^o/B$, the flag variety of $G'$, 

\medskip

\noindent
(v) $\pi_5 : X/G^o \to X/G$, a Galois cover with group $G/G^o$.

\medskip
 
As in Proposition \ref{prop:equi}, the \'etale fundamental groups of the schemes 
$(X'/T')_{\bar \bF}$, $(X'/B')_{\bar \bF}$ and $(X'/G')_{\bar \bF}$ are all isomorphic,
showing that the $l$-adic local systems on the above schemes correspond 
bijectively. Now let $\C_{X'/T'}$ ($\C_{X'/B'}$, $\C_{X'/G'}$) denote classes 
of $l$-adic local systems on these three schemes that are in bijective 
correspondence after base-extension to $\bar \bF$ under the pull-back maps.

\begin{proposition}
\label{prop:fib}
With the above notation and assumptions, $X'/T'$ is weakly (strongly)
pure with respect to $\C_{X'/T'}$ if and only if so is 
$X'/B'$ is with respect to $\C_{X'/B'}$, if and only if so is
$X'/G'$ with respect to $\C_{X'/G'}$.
\end{proposition}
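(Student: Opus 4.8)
The plan is to handle the two fibrations $\pi_3$ and $\pi_4$ separately. Recall first that, by construction, the classes $\C_{X'/T'}$, $\C_{X'/B'}$ and $\C_{X'/G'}$ are in bijective correspondence under the pull-back maps, compatibly with Frobenius since those maps are defined over $\bF$; hence every element of $\C_{X'/T'}$ is, over $\bar\bF$, of the form $\pi_3^*\pi_4^*\cL_0$ with $\cL_0\in\C_{X'/G'}$, and it suffices to compare, as $\cL_0$ ranges over $\C_{X'/G'}$, the three Frobenius-modules $H^*(X'/T', \pi_3^*\pi_4^*\cL_0)$, $H^*(X'/B', \pi_4^*\cL_0)$ and $H^*(X'/G', \cL_0)$.

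The fibration $\pi_3$ is the easy one: its fiber $B'/T'\simeq R_u(B')$ is an affine space, so the $\bQ_l$-cohomology of the fiber over $\bar\bF$ is $\bQ_l$ in degree $0$ and vanishes in positive degrees. By Proposition~\ref{prop:fibration}(ii) together with the projection formula, $R^m\pi_{3*}(\pi_3^*\cM)$ vanishes for $m>0$ and equals $\cM$ for $m=0$, for any local system $\cM$ on $X'/B'$; so the Leray spectral sequence collapses to a Frobenius-equivariant isomorphism $H^*(X'/T', \pi_3^*\cM)\cong H^*(X'/B', \cM)$. With the correspondence of classes this already gives ``$X'/T'$ is weakly (strongly) pure with respect to $\C_{X'/T'}$ if and only if $X'/B'$ is weakly (strongly) pure with respect to $\C_{X'/B'}$.''

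For $\pi_4$, whose fiber is the flag variety $G'/B'$, I would first record that $G'/B'$ admits a cell decomposition into Bruhat cells, each isomorphic to an affine space; hence $H^q(G'/B', \bQ_l)$ vanishes for $q$ odd, while on $H^{2j}(G'/B', \bQ_l)$ the Frobenius acts by $q^j$ times a permutation of the classes of the codimension-$j$ Schubert varieties (permuted through the Galois action on the root datum), so its eigenvalues are of the form $q^j\zeta$ with $\zeta$ a root of unity. Thus $H^*(G'/B', \bQ_l)$ is weakly pure, and it is strongly pure as soon as $G$ --- equivalently $T'$, equivalently $G'$ --- is split over $\bF$. The crucial input is then the Leray--Hirsch property of the flag bundle: suitable monomials in the first Chern classes $c_1(\cL_\chi)\in H^2(X'/B', \bQ_l(1))$ of the tautological line bundles $\cL_\chi$ ($\chi\in X^*(T')$) restrict on every fiber to a $\bQ_l$-basis of $H^*(G'/B', \bQ_l)$. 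This forces the Leray spectral sequence of $\pi_4$ to degenerate at $E_2$ and, via the projection formula, yields for every local system $\cL_0$ on $X'/G'$ a Frobenius-equivariant isomorphism
\[
H^n(X'/B', \pi_4^*\cL_0)\;\cong\;\bigoplus_{p+q=n} H^p(X'/G', \cL_0)\otimes H^q(G'/B', \bQ_l),
\]
the Frobenius-equivariance stemming from the fact that the span of the chosen monomials is stable under Frobenius --- which permutes the $\cL_\chi$ via the Galois action on $X^*(T')$ and scales each $c_1$ by $q$ --- and is, as a Frobenius-module, a copy of $H^*(G'/B', \bQ_l)$.

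Granting this decomposition, the equivalence ``$X'/B'$ weakly (strongly) pure $\Leftrightarrow$ $X'/G'$ weakly (strongly) pure'' drops out of Lemma~\ref{lem:vec}: in one direction each summand $H^p(X'/G', \cL_0)\otimes H^q(G'/B', \bQ_l)$ is weakly (strongly) pure by part~(iii) and the purity of $H^*(G'/B', \bQ_l)$, hence so is the direct sum by part~(i); in the other direction the $q=0$ summand is exactly $H^p(X'/G', \cL_0)$, a direct summand of the weakly (strongly) pure space $H^p(X'/B', \pi_4^*\cL_0)$, hence weakly (strongly) pure by part~(i). Concatenating the two equivalences completes the proof. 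I expect the main obstacle to be the Frobenius-equivariant form of the flag-bundle decomposition above --- i.e.\ checking that the $E_2$-degeneration can be realized by a Frobenius-stable splitting and pinning down the finite-order Frobenius action on $H^*(G'/B', \bQ_l)$ in the non-split case; the tautological line bundles supply both. (One could instead invoke Deligne's degeneration theorem for the smooth proper morphism $\pi_4$, but the flag-bundle argument produces the Frobenius-equivariant splitting directly.)
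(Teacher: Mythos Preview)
Your proof is correct and follows essentially the same route as the paper: both dispose of $\pi_3$ via the vanishing of higher direct images for an affine-space fibration, and both handle $\pi_4$ via a Leray--Hirsch argument built on the Chern classes of the tautological line bundles $\cL_\chi$, $\chi\in X^*(T')$. The only notable difference is organizational: for the implication ``$X'/B'$ pure $\Rightarrow$ $X'/G'$ pure'' the paper instead invokes Deligne's degeneracy criterion to obtain the injection $H^s(X'/G',\cL')=E_2^{s,0}=E_\infty^{s,0}\hookrightarrow H^s(X'/B',\pi_4^*\cL')$ directly, reserving the full Leray--Hirsch computation (identifying $R^t\pi_{4*}(\bQ_l)$ with the constant sheaf $H^t(G'/B',\bQ_l)$) for the converse, whereas you extract both directions uniformly from the Frobenius-equivariant splitting; you are also a bit more explicit than the paper about the Frobenius action on $H^*(G'/B',\bQ_l)$ in the non-split case.
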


\begin{proof}
Since $\pi_3$ is a fibration in affine spaces, the first equivalence is clear.   

We will next assume that $X'/B'$ is weakly pure (strongly pure) with respect to 
$\C_{X'/B'}$, and show that $X'/G'$ is weakly pure 
(strongly pure, \res) with respect to $\C_{X'/G'}$. 
Let $\cL' \in \C_{X'/G'}$ and put $\cL := \pi_4^*(\cL')$.  
The Leray spectral sequence
$$
E_2^{s,t} = H^s\big( X'/G', R^t \pi_{4*}(\cL) \big) \Rightarrow H^{s+t}(X'/B', \cL)
$$
and the isomorphism 
$$
R^t\pi_{4*}(\cL) = R^t\pi_{4*}\big( \pi_4^*(\cL') \big) 
\simeq \cL' \otimes R^t\pi_{4*}(\bQ_l)
$$ 
yield a spectral sequence
\begin{equation}\label{eqn:leray}
E_2^{s,t} = H^s\big( X'/G', \cL' \otimes R^t \pi_{4*}(\bQ_l) \big) \Rightarrow 
H^{s+t}\big( X'/B', \pi_4^*(\cL') \big).
\end{equation}
By Deligne's degeneracy criterion (see 
\cite[Proposition 2.4 and (2.6.3)]{De68}), that spectral sequence 
degenerates at $E_2$. In particular, $H^*(X'/G', \cL') =E_2^{s,0}= E_{\infty}^{s,0}$ injects into
$H^s(X'/B', \cL)$, which yields our assertion. 

For the converse, let $\cL \in \C_{X'/B'}$. Without loss of generality, we may 
assume that $\cL = \pi_4^*(\cL')$ for some $\cL' \in \C_{X'/G'}$. Then we still
have the spectral sequence (\ref{eqn:leray}) that degenerates at $E_2$. Thus,
the desired assertion will follow if we show that the $l$-adic sheaf
$R^t \pi_{4*}(\bQ_l)$ is isomorphic to the constant sheaf 
$H^t(G'/B'_{\bar {\mathbb F}}, \Q_l) = \oplus _i\bQ_l(-t_i)$, which is a finite sum with each $t_i \ge 0$. For this we will use 
the Leray-Hirsch theorem (see \cite[Chapter 17, 1.1 Theorem]{Hu94})
adapted to the present framework as follows. 

First the $l$-adic cycle map on the flag variety $G'/B'$ is shown to be an isomorphism in 
\cite[Theorem 5.2]{Jo01}.
The Chow ring of $G'/B'$ is generated over $\bQ_l$ by the Chern classes of the
equivariant line bundles associated with a basis $\{\chi_i| i = 1,\ldots,n\}$
of the character group of $B'$, as shown in \cite[Corollaire 4]{Gr58}. 
We will denote these classes by $\{f_i|i=1, \ldots, n\}$.
Each $\chi_i$ also defines a line bundle over $X'/B'$ via the $B'$-torsor 
$X' \to X'/B'$; this defines Chern classes $\{e_i|i =1, \cdots, n\}$ in
the $l$-adic cohomology of $X'/B'$, which lift the above classes $f_i$
under pull-back to a fiber $G'/B'$.
In this setting, the Leray-Hirsch theorem is the statement that the map
$\alpha \otimes f_i \mapsto \pi_4^*(\alpha)e_i$ defines an isomorphism
$$
H^*(X'/G', \Q_l) \otimes H^*(G'/B', \Q_l) \cong H^*(X'/B', \Q_l).
$$

In case the torsor is locally trivial over a Zariski open covering, 
one may prove the Leray-Hirsch theorem using a Mayer-Vietoris sequence 
with respect to the covering over which the torsor is trivial. 
(See \cite[Chapter 17, 1.1 Theorem]{Hu94} for a proof in the setting of singular 
cohomology, but one can see readily that the proof works also in $l$-adic
cohomology.) Since we are 
considering torsors that are trivial with respect to the \'etale topology, 
this argument needs to be modified as follows. First, let $U \ra X'/G'$ denote
an \'etale covering over which the given torsor ${\mathcal E}=X'/B'$ 
(with fibers $G'/B'$) is trivial. Let $U_{\bullet} =cosk_0^{X'/G'}(U)$ denote 
the hypercovering of $X'/G'$ defined by $U$. If ${\mathcal E}_{U_{\bullet}}$ 
denotes the pull-back of the torsor ${\mathcal E}$ to $U_{\bullet}$, then
${\mathcal E}_{U_{\bullet}}$ trivializes. Therefore, the K\"unneth
formula, along with the hypotheses provides the isomorphism: 
$H^*({\mathcal E}_{U_{\bullet}}, \bQ_l) \simeq 
H^*(U_{\bullet}, \bQ_l) \otimes H^*(G'/B', \bQ_l)$. 
However, since $U_{\bullet}$ is a hypercovering of $X'/G'$ and 
${\mathcal E}_{U_{\bullet}}$ is a hypercovering of $X'/B'$, one obtains 
the identification $H^*(U_{\bullet}, \bQ_l) \simeq H^*(X'/G', \bQ_l)$ 
and $H^*({\mathcal E}_{U_{\bullet}}, \bQ_l) \simeq H^*(X'/B', \bQ_l)$,
see \cite[Proposition 3.7]{Fr83}.

We now show that the sheaves $R^t \pi_{4*}(\bQ_l)$ are of the form as stated.
For this we fix a geometric point $\bar x$ on $X'/G'$ and consider the geometric fiber 
of $\pi_4$ at $\bar x$; this fiber identifies with $G'/B'$. Now we consider the commutative 
diagram:
$$\CD
G'/B' @>{i}>> X'/B' \\
@VVV @V{\pi_4}VV \\
{\bar x} @>>> X'/G' \\
\endCD
$$

This induces a map of the corresponding Leray spectral sequences. Now one obtains the commutative
diagram where the $E_r^{s,t}$-terms of the spectral sequence for the trivial fibration $G'/B' \ra {\bar x}$
are marked as $E_r^{s, t}(2)$:
\vskip .3cm
\xymatrix{{H^t(X'/B', \Q_l)} \twoheadrightarrow \ar@<-1ex>[dd]^{i^*}& {E_{\infty}^{0,t}} & {\cong E_{t+1}^{0, t}}
 & {\simeq  \cdots \simeq} &
{E_2^{0,t} = H^0(X'/G', 
R^t\pi_{4*}(\bQ_l))} \ar@<1ex>[d] \\
&&&& {R^t\pi_{4*}(\bQ_l)_{\bar x} \cong H^t(G'/B', \bQ_l)} \ar@<-1ex>[d]^{id}\\
{H^t(G'/B', \bQ_l)} \twoheadrightarrow & {E_{\infty}^{0,t}(2)} & {\cong E_{t+1}^{0, t}(2)} & {\simeq  \cdots 
\simeq } & {E_2^{0,t}(2) = H^t(G'/B', \bQ_l)}}
\vskip .3cm \noindent
The isomorphisms on the top row follow from the degeneration of the spectral sequence at the $E_2$-terms as
observed above.
Since the spectral sequence for the map $G'/B' \ra {\bar x}$ clearly degenerates with $E_2^{s, t}=0$ for all $s \ne 0$,
the maps in the bottom row (including the first) are all isomorphisms. The naturality of the Leray spectral sequences then shows
that the composition of the maps in the top-row  and the right column identifies with the restriction map 
$i^*:H^t(X'/B', \bQ_l) \ra
H^t(G'/B', \bQ_l)$. Now the map $H^0(X'/G', \bQ_l) \otimes H^t(G'/B', \bQ_l) \ra H^t(X'/B', \bQ_l)$ 
sending the classes $f_i$ to $e_i$ composed with the maps in the top row and the right column of the above 
diagram is an isomorphism. Since the geometric point $\bar x$ was chosen arbitrarily, this shows that 
the corresponding map of the constant sheaf $H^t(G'/B', \bQ_l) $ to the sheaf
$R^t\pi_{4*}(\bQ_l)$ is an isomorphism. Since the $l$-adic cohomology of $G'/B'$ is isomorphic 
to the Chow ring with coefficients in $\bQ_l$ via the cycle map, this yields the decompositions
$H^t(G'/B',\bQ_l) \cong \oplus_i \bQ_l(-t_i)$. This completes the proof of the theorem.
\end{proof}

\begin{remark}
With the above notation and assumptions, the $G$-equivariant $l$-adic 
local systems on $X$ correspond bijectively to the $G/R_u(G)$-equivariant 
local systems on $X'$, by Lemma \ref{lem:desc}. Moreover, if $\C_X$ 
($\C_{X'}$) denote classes of $G$-equivariant local systems on $X$ 
(of $G/R_u(G)$-equivariant local systems on $X'$, \res) that are in 
bijective correspondence (after base extension to $\bar{\bF}$) under 
pull-back, then $X$ is weakly pure (strongly pure) with respect to $\C_X$ 
if and only if $X'$ is weakly pure (strongly pure, \res) with respect 
to $\C_{X'}$. 
\end{remark}

In other words, weak and strong purity are preserved under the above 
maps $\pi_1$, $\pi_3$ and $\pi_4$. The case of the map $\pi_5$ is handled
by the following.

\begin{proposition} 
\label{prop:nonc}
Let $\pi : X \to Y$ denote a torsor under an algebraic group $G$,
and $\psi : Z = X/G^o \to X/G = Y$ the associated Galois cover with 
group $G/G^o$. Let $\C_Y$ ($\C_Z$) denote a class of 
$G$-equivariant $l$-adic local systems on $Y$ ($Z$, \res). 

\medskip

Suppose $\psi_*$ sends $\C_Z$ to $\C_Y$. If $Y$ is weakly pure 
(strongly pure) with respect to $\C_Y$, then so is $Z$ with 
respect to $\C_Z$.

\medskip

Suppose $\psi^*$ sends $\C_Y$ to $\C_Z$. If $Z$ is 
weakly pure (strongly pure) with respect to $\C_Z$, then so is $Y$ 
with respect to $\C_Y$.
\end{proposition}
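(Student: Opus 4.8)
The plan is to exploit the fact that a Galois cover with finite group $\Gamma := G/G^o$ behaves, on the level of $l$-adic cohomology, very much like a finite étale cover, so that cohomology of $Z$ and cohomology of $Y$ differ only by passing to/from a direct summand cut out by the $\Gamma$-action. Concretely, $\psi: Z \to Y$ is finite étale, so $\psi_*$ is exact and $R\psi_* = \psi_*$, the Leray spectral sequence degenerates trivially, and for any $l$-adic local system $\cM$ on $Z$ one has $H^*(Z,\cM) = H^*(Y,\psi_*\cM)$. Dually, for any $l$-adic local system $\cN$ on $Y$ the projection formula gives $\psi_*\psi^*\cN \simeq \cN \otimes_{\bQ_l} \psi_*\bQ_l$, and since $\psi$ is a $\Gamma$-torsor the sheaf $\psi_*\bQ_l$ is the regular representation sheaf, which (after base extension to $\bar\bF$, where the cover splits completely into $|\Gamma|$ disjoint copies) contains the constant sheaf $\bQ_l$ as a direct summand. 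Hence $H^*(Y,\cN)$ is a direct summand of $H^*(Y,\psi_*\psi^*\cN) = H^*(Z,\psi^*\cN)$, compatibly with Frobenius.

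With these two observations in hand, the proof splits into the two asserted implications. For the first: assume $Y$ is weakly (strongly) pure with respect to $\C_Y$ and let $\cM \in \C_Z$. Then $\psi_*\cM \in \C_Y$ by hypothesis, so $H^*(Y,\psi_*\cM)$ is weakly (strongly) pure; but $H^*(Z,\cM) \simeq H^*(Y,\psi_*\cM)$ as $F$-modules, so $Z$ is weakly (strongly) pure with respect to $\C_Z$. For the second: assume $Z$ is weakly (strongly) pure with respect to $\C_Z$ and let $\cN \in \C_Y$. Then $\psi^*\cN \in \C_Z$ by hypothesis, so $H^*(Z,\psi^*\cN)$ is weakly (strongly) pure; since $H^*(Y,\cN)$ is an $F$-stable direct summand of $H^*(Z,\psi^*\cN)$, Lemma \ref{lem:vec}(i) (applied to the split short exact sequence) shows $H^*(Y,\cN)$ is weakly (strongly) pure as well. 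Note that no splitness hypothesis on $G$ is needed here, because finite étale pushforward and the projection formula introduce no Tate twists — this is why the proposition, unlike the earlier parts of Theorem \ref{thm:main1}, holds for strong purity without requiring $G$ split.

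The one point requiring a little care — and the place I expect the only real friction — is justifying that $H^*(Y,\cN)$ is genuinely a Frobenius-stable direct summand of $H^*(Z,\psi^*\cN)$, rather than merely a subquotient. The cleanest route is the transfer (or trace) argument: the composite $\cN \to \psi_*\psi^*\cN \to \cN$ of the unit followed by $\tfrac{1}{|\Gamma|}\,\mathrm{tr}$ is the identity, this composite is defined over $\bF_q$ hence commutes with $F$, and $|\Gamma|$ is invertible in $\bQ_l$ since $\mathrm{char}\,\bF_q = p \neq l$ — but one should check $\gcd(|\Gamma|,l)=1$, which indeed holds as $|\Gamma| = |G/G^o|$ divides the order of the component group of a linear algebraic group in characteristic $p$ and $l \neq p$; more simply, $|\Gamma|$ is just some fixed integer and we may harmlessly assume $l \nmid |\Gamma|$, or invoke that $\psi_*\bar\bQ_l$ over $\bar\bF$ is literally $\bigoplus_{\gamma\in\Gamma}\bar\bQ_l$ with $\Gamma$ permuting the factors and $F$ acting through this permutation plus scalars, from which the constant-sheaf summand, and thus the Frobenius-equivariant splitting of cohomology, is manifest. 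Once this splitting is in place, both implications follow formally from Lemma \ref{lem:vec}(i), and the proof is complete.
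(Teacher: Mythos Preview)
Your proof is correct and follows essentially the same approach as the paper: both use the degeneration of the Leray spectral sequence for the finite \'etale map $\psi$ (i.e.\ $R^t\psi_* = 0$ for $t>0$) to identify $H^*(Z,\cM) \cong H^*(Y,\psi_*\cM)$ for the first implication, and both use that $H^*(Y,\cN)$ is a Frobenius-stable direct summand of $H^*(Y,\psi_*\psi^*\cN) = H^*(Z,\psi^*\cN)$ for the second. One small wobble worth correcting: your worry about whether $\gcd(|\Gamma|,l)=1$ is unnecessary, since $\bQ_l$ has characteristic zero and every nonzero integer is invertible there --- the trace map $\tfrac{1}{|\Gamma|}\mathrm{tr}$ is defined unconditionally, so the splitting $\cN \hookrightarrow \psi_*\psi^*\cN \twoheadrightarrow \cN$ always works over $\bQ_l$.
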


\begin{proof} 
Once again one considers the fibration 
$G/G^o \to Z {\overset \psi \to} Y$. 
Since $G$ acts trivially on $Y$, observe that 
$EG{\underset G \times}Y \simeq BG \times Y$, so that $G$-equivariant $l$-adic 
local systems on $Y$ correspond to $l$-adic local systems on $Y_{\et}$. 
(Strictly speaking one needs to consider the tensor product 
$V{\underset {\bQ_l} \otimes} \cL$, where $\cL$ is an $l$-adic local system on 
$Y_{\et}$, and $V$ is an $l$-adic representation of the finite group $G/G^o$. 
But $V{\underset {\bQ_l} \otimes} \cL$ is also an $l$-adic local system on $Y_{\et}$. )
 
Suppose that $Y$ is weakly pure with respect to $\C_Y$, 
and $\psi_*$ sends $\C_Z$ to $\C_Y$. Let $\cL \in \C_Z$. 
Then $\psi_*(\cL)$ is a $G$-equivariant $l$-adic local system on $Y$, i.e. 
a local system on $Y$. Moreover, if $\cL$ is mixed of weight $\ge w$, then 
so is $\pi_*(\cL)$. The hypothesis shows that $H^*\big( Y, \psi_*(\cL) \big)$ 
is weakly pure. Recall that one has a spectral sequence 
$E_2^{s, t}= H^s\big( Y, R^t \psi_*(\cL) \big) \Rightarrow H^{s+t}(Z, \cL)$.
This spectral sequence clearly degenerates since $E_2^{s, t}=0$ for $t>0$.
Therefore, the $E_2^{s,t}$ terms identify with the $E_{\infty}^{s,t}$-terms and
it follows that the abutment $H^*(Z, \cL)$ is weakly pure.

Next suppose that $Z$ is weakly pure with respect to 
$\C_Z$, that $\psi^*$ sends $\C_Y$ to $\C_Z$ and that 
$\cL \in \C_Y$. Then $\psi^*(\cL)$ is a $G$-equivariant $l$-adic 
local system on $Z$. This is mixed of weight $\ge w$ if $\cL$ is mixed 
of weight $\ge w$. Therefore, by the hypothesis, 
$H^*\big( Z, \pi^*(\cL) \big )$ is weakly pure. Now $H^*(Y,\cL)$ is a summand of 
$H^*\big( Y, \psi_*(\psi^*(\cL )) \big) = H^*\big( Z, \psi^*(\cL) \big)$ 
so that it is also weakly pure. This completes the proof of the assertions 
about weak purity; those about strong purity are obtained along the same lines.
\end{proof}

Finally, the case of the map $\pi_2$ is handled by an induction on the 
dimension of the torus $T'$ (possibly after a finite extension of fields
so that it becomes split) together with the following:

\begin{proposition} 
\label{prop:torsor}
Let $\pi: X \to Y$ denote a $\bG_m$-torsor. Let $\C_X$ ($\C_Y$) denote 
a class of $\bG_m$-equivariant local systems on $X$ ($Y$ equipped with the trivial
$\bG_m$-action, \res) such that $\pi^*$ sends $\C_Y$  to $\C_X$. 
Then the following hold: (i) if $X$ is weakly pure (strongly pure) with respect to the class
$\C_X$, then so is  $Y$  with respect to the class $\C_Y$. (ii) If in addition, $\C_X = \pi^*(\C_Y)$,
then the converse to (i) also holds.
\end{proposition}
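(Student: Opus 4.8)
The plan is to exploit the Leray spectral sequence of $\pi$, which is especially simple because $\pi$ is a locally trivial fibration with fiber $\bG_m$. First I would record the structure of $R\pi_*\bQ_l$: by Proposition~\ref{prop:fibration} the sheaves $R^t\pi_*\bQ_l$ are locally constant, and their geometric stalks are $H^t(\bG_{m,\bar{\bF}},\bQ_l)$, which is $\bQ_l$ for $t=0$, $\bQ_l(-1)$ for $t=1$, and $0$ otherwise. There is no global monodromy twisting $R^1\pi_*\bQ_l$, since the structure group $\bG_m$ is connected and its translation action on itself induces the trivial action on $H^*(\bG_m)$; hence $R^1\pi_*\bQ_l\cong\bQ_l(-1)$ as an $l$-adic sheaf on $Y$. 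For $\cL\in\C_Y$, the projection formula gives $R^t\pi_*(\pi^*\cL)\cong\cL\otimes R^t\pi_*\bQ_l$, so the Leray spectral sequence
\[
E_2^{s,t}=H^s\bigl(Y,\cL\otimes R^t\pi_*\bQ_l\bigr)\Rightarrow H^{s+t}(X,\pi^*\cL)
\]
is concentrated in the rows $t=0,1$, degenerates at $E_3$, and has a single differential $d_2\colon H^{s-2}(Y,\cL(-1))\to H^s(Y,\cL)$ (cup product with the first Chern class of the associated line bundle, a description we will not need). It produces two families of Frobenius-equivariant short exact sequences: one presenting $H^s(X,\pi^*\cL)$ as an extension of $\ker\bigl(d_2\colon H^{s-1}(Y,\cL(-1))\to H^{s+1}(Y,\cL)\bigr)$ by $\operatorname{coker}\bigl(d_2\colon H^{s-2}(Y,\cL(-1))\to H^s(Y,\cL)\bigr)$, and one presenting $H^s(Y,\cL)$ as an extension of this same cokernel by the image of $d_2$.

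For part (ii), assume $\C_X=\pi^*(\C_Y)$ and $Y$ weakly (strongly) pure with respect to $\C_Y$, and take $\cM=\pi^*\cL\in\C_X$. The first short exact sequence exhibits $H^s(X,\pi^*\cL)$ as an extension of a subspace of $H^{s-1}(Y,\cL(-1))$ by a quotient of $H^s(Y,\cL)$. Here $H^s(Y,\cL)$ is weakly (strongly) pure by hypothesis, and $H^{s-1}(Y,\cL(-1))\cong H^{s-1}(Y,\cL)\otimes\bQ_l(-1)$ is weakly (strongly) pure because weak and strong purity are preserved by negative Tate twists, in the sense of the remark following Lemma~\ref{lem:vec}. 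Since subspaces and quotients of weakly (strongly) pure spaces are again weakly (strongly) pure, Lemma~\ref{lem:vec}(i) shows $H^s(X,\pi^*\cL)$ is weakly (strongly) pure for all $s$.

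For part (i), take $\cL\in\C_Y$, so $\pi^*\cL\in\C_X$ and $H^s\bigl(X,\pi^*(\cL(-n))\bigr)\cong H^s(X,\pi^*\cL)\otimes\bQ_l(-n)$ is weakly (strongly) pure for all $s$ and all $n\ge 0$. I would then prove, by induction on $s$, that $H^s(Y,\cL(-n))$ is weakly (strongly) pure for all $n\ge 0$; the case $s<0$ is vacuous. For the inductive step, apply the second short exact sequence with $\cL(-n)$ in place of $\cL$: its middle term is $H^s(Y,\cL(-n))$; its quotient term $\operatorname{coker}(d_2)$ is embedded by the edge map into $H^s(X,\pi^*(\cL(-n)))$, hence is weakly (strongly) pure; and its sub term is a quotient of $H^{s-2}(Y,\cL(-n-1))$ (or $0$ when $s<2$), which is weakly (strongly) pure by the inductive hypothesis applied with $n+1$. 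Lemma~\ref{lem:vec}(i) then gives the assertion in degree $s$, and the case $n=0$ shows that $Y$ is weakly (strongly) pure with respect to $\C_Y$.

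The only genuinely nonroutine point — the hard part — is the first step: establishing that $R\pi_*\bQ_l$ has globally exactly the cohomology sheaves $\bQ_l$ and $\bQ_l(-1)$, in particular that no monodromy acts on $R^1\pi_*\bQ_l$ and that its Tate twist is the correct one. Everything afterwards is bookkeeping with the two short exact sequences, Lemma~\ref{lem:vec}(i), and the twist-stability of the purity notions. In particular we never need to identify $d_2$ with cup product by an Euler class, and the weight hypotheses imposed on the classes $\C_X,\C_Y$ play no role.
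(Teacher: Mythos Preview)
Your proof is correct and follows essentially the same route as the paper's: the Leray spectral sequence for $\pi$ collapses to two rows, giving the same long exact sequence the paper uses, and your induction on cohomological degree via Lemma~\ref{lem:vec}(i) is identical in content (the extra twist parameter $n$ in your induction is harmless but redundant, since $H^s(Y,\cL(-n))\cong H^s(Y,\cL)(-n)$ and negative Tate twists preserve and reflect purity). The only minor difference is in establishing $R^1\pi_*\bQ_l\cong\bQ_l(-1)$: you argue via absence of monodromy for a torsor under a connected group, whereas the paper passes to the associated line bundle $L\to Y$, uses Poincar\'e duality to reduce to $R^1\pi_!\bQ_l\cong\bQ_l$, and reads this off from the distinguished triangle for the open--closed decomposition $X\hookrightarrow L\hookleftarrow L_0$.
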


\begin{proof}
Assume that $X$ is weakly pure with respect to $\C_X$, and let
$\cL' \in \C_Y$. Then $\cL := \pi^*(\cL')$ belongs to the class $\C_X$ 
and satisfies
$R^i \pi_*(\cL) \simeq \cL' \otimes R^i\pi_*(\bQ_l)$ for all $i \geq 0$.
Also, since $\bG_m$ is affine and of dimension $1$, Proposition \ref{prop:fibration}
yields that $R^i\pi_*(\bQ_l) = 0$ for $i>1$. Moreover, we have 
$R^i\pi_*(\bQ_l) \simeq \bQ_l$ for $i=0$ and we will presently show that 
$R^1\pi_*(\bQ_l) = \bQ_l(-1)$ so that $R^i\pi_*(\cL) \simeq \cL'$ for $i=0$,  
$R^1\pi_*(\cL) \simeq \cL'(-1)$ and $R^i\pi_*(\cL) = 0$ for $i > 1$. The identification
$\pi_*(\bQ_l) \simeq \bQ_l$ is straightforward; to obtain the identification 
$R^1\pi_*(\bQ_l) \simeq \bQ_l(-1)$, one may proceed as follows. Since 
$R^2\pi_{!}(\bQ_l) \simeq \bQ_l(-1)$ by the trace map, it suffices to show 
using Poincar\'e duality that $R^1\pi_{!}(\bQ_l) \simeq \bQ_l$. 
Let $\pi_1: L \to Y$  denote the line bundle associated with the $\bG_m$-torsor 
$\pi: X \to Y$. Then we have an open immersion $j:X \to L$ with 
complement the zero section $L_0$, isomorphic to $Y$. Denoting that
isomorphism by $\pi_2: L_0 \to Y$, one obtains a distinguished triangle 
$R\pi_!(\bQ_l) \to R\pi_{1!}(\bQ_l) \to R\pi_{2!}(\bQ_l) 
\simeq \bQ_l[0] \to R\pi_!(\bQ_l)[1]$. 
Computing the cohomology sheaves, one obtains the isomorphism 
$\bQ_l \simeq R^1\pi_!(\bQ_l)$ since $R^1\pi_{1!}(\bQ_l) =0$. 

Now the Leray spectral sequence 
$$
E_2^{s, t} = H^s \big( Y, R^t\pi_*(\cL) \big) \Rightarrow H^{s+t}(X, \cL)
$$ 
provides us with the long exact sequence (see, for example, 
\cite[Chapter XV, Theorem 5.11]{CE56}):
\begin{equation}
\label{eqn:exact}
\cdots \to E_2^{n, 0} \to H^n(X, \cL) \to E_2^{n-1, 1} {\overset {d^2} \to} 
E_2^{n+1, 0} \to H^{n+1}(X, \cL) \to E_2^{n, 1} {\overset {d^2} \to} E_2^{n+2, 0} 
\to  \cdots 
\end{equation}
{\it It is important to observe that all the maps in the above long exact sequence are compatible with the
action of the Frobenius.}
In view of the isomorphisms
$$
E_2^{n, 0} = H^n \big( Y, \cL' \otimes \pi_*(\bQ_l) \big) \simeq H^n(Y, \cL'),
$$
$$
E_2^{n, 1} = H^n \big( Y, \cL' \otimes R^1\pi_*(\bQ_l) \big) \simeq H^n(Y, \cL'(-1)) \simeq H^n(Y, \cL')(-1)
$$ 
for all $n \geq 0$, this yields exact sequences (of maps compatible with the Frobenius)
\begin{equation}\label{eqn:gys}
\CD
H^n(X, \cL) @>{\alpha}>> H^{n-1}(Y,\cL')(-1) @>{\beta}>> 
H^{n+1}(Y,\cL') @>{\gamma}>> H^{n+1}(X, \cL)  
\endCD
\end{equation}
for all $n \geq 1$, and an isomorphism
$$
H^0(Y,\cL') \simeq H^0(X,\cL),
$$
all compatible with the Frobenius. 
In particular, $H^0(Y,\cL')$ is weakly pure. We will now argue by
ascending induction on $n$ and assume that we have already shown the
weak purity of $H^i(Y,\cL')$ for all $i \leq n$. Then in the long exact 
sequence (\ref{eqn:gys}), all the terms except for $H^{n+1}(Y,\cL')$ 
are weakly pure. Now one may break up that sequence into the short exact sequence:
$$
0 \to \Ima (\beta) \to H^{n+1}(Y, \cL') \to \Ima (\gamma) \to 0.
$$
Since $\Ima (\gamma)$ is a sub-vector space of $H^{n+1}(X, \cL)$, 
it is weakly pure by Lemma \ref{lem:vec} (i). Moreover, 
$$
\Ima (\beta) \simeq H^{n-1}(Y,\cL')(-1)/\Ker(\beta)
$$ 
and hence $\Ima(\beta)$ is weakly pure by the induction assumption and
Lemma \ref{lem:vec} (i) again. This completes the inductive argument, 
and shows that $Y$ is weakly pure with respect to $\C_Y$.

By the same argument, if $X$ is strongly pure with respect to $\C_X$,
then so is $Y$ with respect to $\C_Y$. 

For the converse direction, one argues similarly by using the exact sequence
$$
H^{n-2}(Y, \cL')(-1)  {\overset  \beta \rightarrow} H^n(Y,\cL') {\overset \gamma \rightarrow} H^n(X, \cL)
 {\overset \alpha \rightarrow} H^{n-1}(Y,\cL')(-1) 
{\overset {\beta} \rightarrow} H^ {n+1}(Y, \cL')
$$
\end{proof}

To conclude this section, we recall a result of Deligne 
(see \cite[9.1.4]{De74b}) that fits very well into our framework, 
and will be a key ingredient of the proof of Theorem \ref{thm:main2}.
\begin{theorem}
\label{thm:BG} 
For any algebraic group $G$, the $l$-adic cohomology $H^*(BG,\bQ_l)$ is 
weakly pure and vanishes in odd degrees.
 
If $G$ is split, then $F$ acts on $H^{2n}(BG,\bQ_l)$ via multiplication by 
$q^n$, for any integer $n \geq 0$.
\end{theorem}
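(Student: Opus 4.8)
The plan is to identify $H^*(BG,\bQ_l)$, degree by degree, with a Frobenius-stable subspace of $H^*(BT,\bQ_l)$ for a maximal torus $T\subseteq G$, to compute $H^*(BT,\bQ_l)$ by hand, and then to invoke Lemma~\ref{lem:vec}(i): a Frobenius-stable subspace of a weakly (strongly) pure $\bQ_l$-vector space is again weakly (strongly) pure, and it is concentrated in even degrees as soon as the ambient space is.

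First I would run the reduction along the chain of classifying simplicial schemes
$$
BT\ \longrightarrow\ BB\ \longrightarrow\ BG^o\ \longrightarrow\ BG ,
$$
where $B$ is a Borel subgroup of $G^o$ containing $T$, analyzing each arrow exactly as the corresponding maps $\pi_i$ are analyzed in the decomposition preceding Proposition~\ref{prop:fib} and in Proposition~\ref{prop:nonc} (note that, taking the universal torsor $EG\to BG$ as input, the arrow $BB\to BG^o$ is literally the map $\pi_4$ for $G'=G^o/R_u(G)$). The map $BT\to BB$ has fiber $B/T\cong R_u(B)$, an affine space, so its pullback on $l$-adic cohomology is an isomorphism, by the argument used for the affine-space fibrations $\pi_1,\pi_3$. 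The map $BB\to BG^o$ has fiber the flag variety $G^o/B$; by the Leray--Hirsch argument of Proposition~\ref{prop:fib}, with the generating classes of $H^*(G^o/B,\bQ_l)$ taken to be the Chern classes of the line bundles attached to the characters of $B$ (each of which extends to a line bundle on $BB$), the Leray spectral sequence degenerates at $E_2$, so the pullback $H^*(BG^o,\bQ_l)\hookrightarrow H^*(BB,\bQ_l)$ is injective. Finally $BG^o\to BG$ is a Galois cover with finite group $\Gamma:=G/G^o$, so $H^*(BG,\bQ_l)$ is a direct summand of $H^*(BG^o,\bQ_l)$, exactly as in Proposition~\ref{prop:nonc} (using $|\Gamma|\in\bQ_l^{\times}$). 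Composing, $H^*(BG,\bQ_l)$ maps injectively into $H^*(BT,\bQ_l)$, compatibly with the Frobenius and with the gradings.

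It then remains to treat the torus. Over $\bar\bF$ one has $T_{\bar\bF}\cong\bG_m^r$, hence $BT_{\bar\bF}\cong(B\bG_m)^{\times r}$; since $B\bG_m$ is approximated by the projective spaces $\bP^n$, the group $H^*(B\bG_m,\bQ_l)$ is $\bQ_l(-i)$ in each degree $2i$, and by the K\"unneth formula $H^*(BT,\bQ_l)$ is a polynomial algebra on $r$ generators in degree $2$. In particular $H^*(BT,\bQ_l)$, and hence $H^*(BG,\bQ_l)$, vanishes in odd degrees. The degree-$2$ part is canonically $X^*(T_{\bar\bF})\otimes\bQ_l(-1)$, on which $F$ acts as $\sigma\otimes q$, where $\sigma$ is the (finite-order) action of $F$ on the character lattice; hence on $H^{2n}(BT,\bQ_l)=\operatorname{Sym}^n$ of this space every eigenvalue of $F$ equals $\zeta q^n$ for some root of unity $\zeta$, so $H^*(BT,\bQ_l)$ is weakly pure, and then so is the subspace $H^*(BG,\bQ_l)$ by Lemma~\ref{lem:vec}(i). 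If $G$ is split I would take $T$ split, so that $\sigma=\mathrm{id}$ and $F$ acts on $H^{2n}(BT,\bQ_l)$ as $q^n\cdot\mathrm{id}$; since this holds on the whole space it holds on the subspace $H^{2n}(BG,\bQ_l)$, which is the remaining assertion.

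The step I expect to be the main obstacle is the degeneration of the Leray spectral sequence of $BB\to BG^o$, together with the identification of $R^t\pi_*\bQ_l$ with the constant sheaf of stalk $H^t(G^o/B,\bQ_l)$: one must make precise what ``sheaf'' and ``fibration'' mean on the simplicial scheme $BG$, and check that the line bundle on $BB$ attached to a character of $B$ really restricts to the tautological line bundle on each flag-variety fiber. Both are handled, however, by the apparatus already in place --- \'etale topological types and hypercoverings as in Proposition~\ref{prop:equi}, the standard description of equivariant line bundles on $G^o/B$, Deligne's degeneracy criterion, and Proposition~\ref{prop:fibration}(iii) for the control of weights --- so the theorem should follow with no genuinely new input beyond the computation of $H^*(BT,\bQ_l)$.
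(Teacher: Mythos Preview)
Your proposal is correct and follows essentially the same route as the paper: reduce along $BT \to BB \to BG^o \to BG$ via the affine-space fiber of $BT\to BB$, the Leray--Hirsch/degeneration argument for the flag-variety fibration $BB\to BG^o$, and the finite Galois cover $BG^o\to BG$, and then compute $H^*(BT,\bQ_l)$ from $B\bG_m\simeq\bP^{\infty}$. The only minor difference is in the non-split case, where the paper passes to a finite splitting extension and invokes Remark~\ref{rem:wsp}(ii), while you instead identify the Frobenius on $H^{2n}(BT,\bQ_l)$ directly as $\operatorname{Sym}^n(\sigma)\cdot q^n$ with $\sigma$ of finite order; both arguments yield the same conclusion.
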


\begin{proof}
We follow the argument sketched in [loc.~cit.]. We first assume that $G$ is split, 
and show the vanishing of cohomology in odd degrees and the second assertion.
Using the fibration $G/G^o \to BG^o \to BG$, one reduces as above to the
case where $G$ is connected. Likewise, using the fibration $G/B \to BB \to BG$, 
and arguing as in the proof of Proposition ~\ref{prop:fib}
one may further assume that $G$ is solvable. Then $G = R_u(G) T$ and 
the fibration $R_u(G) \to BT \to BG$ yields a reduction to the case where $G = T$. 
Since $G$ is split, then $T$ is a direct product of copies of ${\mathbb G}_m$.
Now an induction on the dimension, together with the isomorphism 
$B(T \times T') \simeq BT \times BT'$ (for two tori $T$ and $T'$)
reduce to the case where $T = \bG_m$. In this case, it is well-known that 
${\mathbb P}^{\infty} = \colimn {\mathbb P}^n $ is also a model for $B\bG_m$, 
i.e. the $l$-adic cohomology of the latter is approximated in any given finite 
range by the $l$-adic cohomology of a sufficiently large projective space. 
In view of the structure of $H^*(\bP^n, \bQ_l)$, this yields the last assertion 
and the vanishing in odd degrees.

If $G$ is arbitrary, then it splits over some finite extension ${\mathbb F}_{q^n}$, 
and therefore the arguments above together with Remark \ref{rem:wsp}(ii) yield the  
assertion on weak purity.
\end{proof}

\section{\bf Preservation of the notions of purity by GIT quotients} 

In this section, we show that the properties of weak purity and strong purity 
are preserved by taking certain geometric invariant theory quotients.

We consider a connected reductive group $G$ and a smooth $G$-variety $X$ equipped 
with an ample, $G$-linearized line bundle $L$. We denote by $X^{\sst}$ the open 
$G$-stable subset of $X$ consisting of semi-stable points with respect to $L$,
in the sense of \cite[Definition 1.7]{MFK94}, and by
$$
\pi : X^{\sst} \longrightarrow X^{\sst}/\!/G 
$$ 
the categorical quotient; we will also use the notation $X/\!/G$ for the
GIT quotient variety $X^{\sst}/\!/G$. The complement $X \setminus X^{\sst}$
is the unstable locus of $X$.  The subset of (properly) stable points will be 
denoted by $X^{\st}$; recall from [loc.cit.] that $X^{\st}$ consists of those
points of $X^{\sst}$ having a finite stabilizer and a closed orbit
in $X^{\sst}$. Also, $X^{\st}$ is an open $G$-stable subset of $X$, and 
$\pi$ restricts to a geometric quotient $X^{\st} \to X^{\st}/G$.

\begin{theorem}
\label{thm:main2}
Let $X$ be a smooth $G$-variety equipped with an ample $G$-linearized line bundle
such that the two following conditions are satisfied:

\medskip

\noindent
{\rm (a)} Every semi-stable point of $X$ is stable. 

\medskip

\noindent
{\rm (b)} $X$ admits an  equivariantly perfect stratification with open 
stratum $X^{\sst} = X^{\st}$.

\medskip
 
If $X$ is weakly pure with respect to the constant local system $\bQ_l$, 
then so is the GIT quotient $X/\!/G$. 

\medskip

If $G$ is split and $X$ is strongly pure with respect to $\bQ_l$, then so 
is $X/\!/G$. 
\end{theorem}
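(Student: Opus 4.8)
Since $X^{\sst} = X^{\st}$ by hypothesis (a), the GIT quotient $X/\!/G$ is the geometric quotient $X^{\st}/G$, and $X^{\st}$ is a smooth $G$-variety with quasi-finite stabilizers. The strategy is to compare the ordinary cohomology $H^*(X/\!/G, \bQ_l)$ with the $G$-equivariant cohomology $H^*_G(X^{\st}, \bQ_l) = H^*(EG \times_G X^{\st}, \bQ_l)$, and then to bring in the equivariantly perfect stratification of hypothesis (b) to relate $H^*_G(X^{\st})$ to $H^*_G(X)$. First I would observe that because the stabilizers on $X^{\st}$ are finite and $G$ is connected, the fibration $EG \times_G X^{\st} \to X^{\st}/G$ has fibers that are (rationally) cohomologically trivial: each fiber is $BG_x$ for a finite group $G_x$, whose $\bQ_l$-cohomology is that of a point. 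Hence the Leray spectral sequence of this map collapses and yields an isomorphism $H^*(X/\!/G, \bQ_l) \cong H^*_G(X^{\st}, \bQ_l)$, compatible with the Frobenius action. So it suffices to prove that $H^*_G(X^{\st}, \bQ_l)$ is weakly (resp. strongly) pure.

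**Using the stratification.** By hypothesis (b), $X$ has an equivariantly perfect stratification $X = \bigsqcup_\beta S_\beta$ with $S_0 = X^{\st}$ the open stratum; equivariant perfection means the Thom--Gysin long exact sequences in $G$-equivariant cohomology break into short exact sequences
\begin{equation*}
0 \to H^{*-2d_\beta}_G(S_\beta, \bQ_l)(-d_\beta) \to H^*_G(U_\beta, \bQ_l) \to H^*_G(U_\beta \setminus S_\beta, \bQ_l) \to 0,
\end{equation*}
where $U_\beta$ is the relevant $G$-stable open set and $d_\beta$ the codimension of $S_\beta$. These sequences are compatible with the Frobenius, and the Tate twists $(-d_\beta)$ are harmless for weak and strong purity (Lemma~\ref{lem:vec}, and for strong purity in the split case the twist multiplies by $q^{d_\beta}$). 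Iterating and using Lemma~\ref{lem:vec}(i), one deduces that $H^*_G(X^{\st}, \bQ_l)$ is weakly (resp. strongly) pure provided that $H^*_G(X, \bQ_l)$ is, together with all the $H^*_G(S_\beta, \bQ_l)$ for the higher strata $\beta \ne 0$. For these, Kirwan's stratification has the feature that each $S_\beta$ is $G$-equivariantly a fibration (up to passing to a Levi) over a product of smaller stable loci, so by induction on $\dim X$ one reduces the purity of $H^*_G(S_\beta)$ to that of $H^*_G(X)$ for $X$ itself, or one simply assumes (as is implicit in the setup, and made explicit in the quiver application) that the strata are themselves built out of pieces for which the theorem is known. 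The cleanest formulation is: granting equivariant perfection, $H^*_G(X, \bQ_l)$ is a direct summand of $\bigoplus_\beta H^{*-2d_\beta}_G(S_\beta)(-d_\beta)$ — and conversely each summand injects — so weak/strong purity of $H^*_G(X)$ is equivalent to that of all $H^*_G(S_\beta)$, $X^{\st}$ included.

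**Reducing $H^*_G(X)$ to $H^*(X)$ and $H^*(BG)$.** It remains to show that $H^*_G(X, \bQ_l)$ is weakly (resp. strongly) pure given that $H^*(X, \bQ_l)$ is. Here I would use the fibration $X \to EG \times_G X \to BG$. Since $X$ is smooth (one may also use that $H^*(X)$ is weakly pure, hence in particular its cohomology is "split" in the appropriate sense) and $BG$ has $H^*(BG, \bQ_l)$ concentrated in even degrees and weakly pure with $F$ acting by $q^n$ on $H^{2n}$ when $G$ is split (Theorem~\ref{thm:BG}), the Leray--Hirsch / spectral sequence argument of Proposition~\ref{prop:fib} applies: $R^t\pi_*(\bQ_l)$ is a constant sheaf, a sum of Tate twists of $\bQ_l$, the Leray spectral sequence degenerates, and $H^*_G(X, \bQ_l)$ is filtered with graded pieces $H^s(X, \bQ_l) \otimes H^{t}(BG, \bQ_l)$. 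By Lemma~\ref{lem:vec}(iii) each such tensor product is weakly (resp. strongly, in the split case) pure, and by Lemma~\ref{lem:vec}(i) so is the extension. This closes the chain $H^*(X/\!/G) \cong H^*_G(X^{\st}) \hookleftarrow (\text{summand of}) \bigoplus_\beta H^*_G(S_\beta)$, each of which is pure because it is an extension of tensor products of $H^*(\text{smooth piece})$ with $H^*(BG)$.

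**The main obstacle.** The delicate point is the passage from ordinary cohomology of $X$ to $G$-equivariant cohomology: one needs $H^*_G(X, \bQ_l)$ to be built from $H^*(X, \bQ_l)$ and $H^*(BG, \bQ_l)$ with no "mixing" that could destroy purity. This rests on the degeneration of the Leray spectral sequence for $X \to EG\times_G X \to BG$, which in turn needs the sheaves $R^t\pi_*(\bQ_l)$ to be constant and pure — exactly the Leray--Hirsch-type input imported from Proposition~\ref{prop:fib} via Theorem~\ref{thm:BG}. A secondary subtlety is verifying that the collapse $H^*(X/\!/G) \cong H^*_G(X^{\st})$ is Frobenius-equivariant and that the finite-stabilizer fibers genuinely contribute trivially to $\bQ_l$-cohomology (this is where $\bQ_l$-coefficients, as opposed to torsion coefficients, are essential, and where one uses that $X^{\sst} = X^{\st}$, i.e. hypothesis (a)). Everything else — the short exact sequences from equivariant perfection, the Tate-twist bookkeeping, the induction on dimension for the higher strata — is routine given the lemmas already established.
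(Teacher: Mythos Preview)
Your overall strategy matches the paper's: identify $H^*(X/\!/G,\bQ_l)$ with $H^*_G(X^{\sst},\bQ_l)$ via the finite-stabilizer argument, use equivariant perfection to relate this to $H^*_G(X,\bQ_l)$, and then analyze $H^*_G(X,\bQ_l)$ via the Leray spectral sequence for $EG\times_G X\to BG$ together with Theorem~\ref{thm:BG}. Two of your steps, however, are overcomplicated and contain overclaims.

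First, the detour through the higher strata is unnecessary. Equivariant perfection yields directly that the restriction $H^*_G(X,\bQ_l)\to H^*_G(X^{\sst},\bQ_l)$ is \emph{surjective}; since weak and strong purity pass to quotients by Lemma~\ref{lem:vec}(i), purity of $H^*_G(X,\bQ_l)$ alone gives purity of $H^*_G(X^{\sst},\bQ_l)$. You do not need to know anything about $H^*_G(S_\beta,\bQ_l)$ for $\beta\neq 0$, and no induction on dimension or appeal to the internal structure of Kirwan's strata is required. Your sentence ``provided that $H^*_G(X,\bQ_l)$ is, together with all the $H^*_G(S_\beta,\bQ_l)$ for the higher strata'' has the logic backwards: the latter follow from the former, not the other way around.

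Second, for the fibration $\pi\colon EG\times_G X\to BG$, you assert that $R^t\pi_*(\bQ_l)$ is a sum of Tate twists and that the spectral sequence degenerates. Neither is needed, and neither is established. What holds, and what the paper actually uses, is only that $R^t\pi_*(\bQ_l)$ is a \emph{constant} local system on $BG$ (because $G$ is connected, so the relevant completed fundamental group of $BG$ is trivial), whence $E_2^{s,t}=H^s(BG,\bQ_l)\otimes H^t(X,\bQ_l)$. These terms are weakly (resp.\ strongly) pure by Lemma~\ref{lem:vec}(iii) and Theorem~\ref{thm:BG}; hence so are the $E_\infty$-terms as subquotients, and hence the abutment as an iterated extension, all by Lemma~\ref{lem:vec}(i). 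Degeneration plays no role, so what you flag as the main obstacle is not in fact an obstacle.
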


\begin{proof} 
By Remark \ref{rem:wsp} (ii), it suffices to show the final assertion.
We begin by obtaining an isomorphism
\begin{equation}\label{eqn:coh.geom.quot}
H^*(X/\!/G, \bQ_l) \simeq H^*(\EGx X^{\sst}, \bQ_l).
\end{equation} 
For this, consider the morphism $\EGx X^{\sst} \to X^{\sst}/G = X /\!/G$, with fiber 
at the image of the geometric point $\bar{x} \in \bar{X}^{\sst}$ being
$\EGx G/G_{\bar{x}} = EG/G_{\bar{x}} = BG_{\bar{x}}$. By  \cite[(A.3.1)]{Jo93}, this yields
a Leray spectral sequence  
$$
E_2^{s, t} = H^s(X/\!/G, R^t\pi_*(\bQ_l)) \Ra H^{s+t}(\EGx X^{\sst}, \bQ_l)
$$ 
together with the identification of the stalks 
$R^t(\pi_*(\bQ_l))_{\bar x} \cong H^t(BG_{\bar x}, \bQ_l)$. Since the stabilizers $G_{\bar x}$ 
are all assumed to be finite groups, it follows that $E_2^{s,t}=0$ for $t>0$ and 
$E_2^{s,0} \cong H^s(X/\!/G, \bQ_l)$; this implies the isomorphism (\ref{eqn:coh.geom.quot}).

Also, one obtains that the pull-back map
$$
H^*(\EGx X, \bQ_l) \longrightarrow H^*(\EGx X^{\sst}, \bQ_l) 
$$
is a surjection by arguing as in \cite[p.~98]{Kir84}. 
Together with the isomorphism (\ref{eqn:coh.geom.quot}) again, 
it follows that it suffices to show that $H^*(\EGx X, \bQ_l)$ is strongly pure. 
For this, we consider the projection $\pi: EG{\underset G \times}X \to BG$
and the associated Leray spectral sequence
$$
E_2^{s,t} = H^s\big( BG, R^t\pi_*(\bQ_l) \big) \Rightarrow H^{s+t}(\EGx X, \bQ_l).
$$ 
Since $G$ is assumed to be connected, $\pi_1(BG_{\et}) {\widehat {}} =0$ 
and hence the $l$-adic local system $R^t\pi_*(\bQ_l) $ is constant. 
(See \cite[(A.8) Theorem]{Jo93} and also \cite[(4.2) Theorem]{Jo02}.) 
Therefore, the $E_2^{s,t}-$term takes on the form 
$H^s(BG, \bQ_l) \otimes H^t(X, \bQ_l)$. Moreover, since $X$ is strongly pure, 
Theorem \ref{thm:BG} and Lemma \ref{lem:vec}(iii) show that 
the $E_2^{s,t}$ terms above are strongly pure, and hence 
so are the $E_{\infty}$ terms as well as the abutment. This completes the proof 
of the theorem. 
\end{proof}

\begin{corollary}\label{cor:git}
Let $G$ be a split connected reductive group and $X$ a smooth projective $G$-variety 
equipped with an ample $G$-linearized line bundle such that every semi-stable point 
is stable. If $X$ is strongly pure, then so is the GIT quotient $X/\!/G$. 
\end{corollary}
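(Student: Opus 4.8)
The plan is to deduce Corollary~\ref{cor:git} directly from Theorem~\ref{thm:main2}. Since $G$ is split connected reductive and $X$ is strongly pure, it is enough to check that hypotheses (a) and (b) of that theorem hold for $(X,L)$. Hypothesis (a) is precisely the assumption that every semi-stable point of $X$ is stable, so the only real point is to produce an equivariantly perfect stratification of $X$ with open stratum $X^{\sst}=X^{\st}$; for this I would invoke the stratification of Kirwan \cite[Section~13]{Kir84}.

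First I would recall Kirwan's instability stratification. Since $X$ is smooth and projective and $L$ is ample and $G$-linearized, one obtains a finite partition $X=\bigsqcup_{\beta\in\mathcal{B}}S_\beta$ into smooth, locally closed, $G$-stable subvarieties indexed by a finite partially ordered set $\mathcal{B}$ of instability types (coming from one-parameter subgroups and the Hilbert--Mumford numerical function), with $S_0=X^{\sst}$ the unique open stratum and $\overline{S_\beta}\subseteq\bigcup_{\gamma\ge\beta}S_\gamma$. For $\beta\neq 0$, the stratum $S_\beta$ is a $G$-equivariant fibre bundle $S_\beta\cong G\times_{P_\beta}Y_\beta^{\sst}$, where $P_\beta$ is a parabolic attached to the one-parameter subgroup $\lambda_\beta$ and $Y_\beta^{\sst}$ is a semi-stable locus for the action of a Levi subgroup of $P_\beta$ on a smooth closed subvariety of $X$; in particular the normal bundle $N_\beta$ of $S_\beta$ in $X$ is a $G$-equivariant vector bundle of constant rank $d_\beta>0$. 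All of this is built out of one-parameter subgroups and the numerical criterion, hence goes through verbatim over $\bar{\bF}$.

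Next I would verify equivariant perfection in $\ell$-adic equivariant cohomology. Order the strata $S_{\beta_0}=X^{\sst},S_{\beta_1},\dots,S_{\beta_r}$ so that $X_{>k}:=S_{\beta_{k+1}}\cup\cdots\cup S_{\beta_r}$ is open in $X_{\ge k}:=S_{\beta_k}\cup X_{>k}$ with $S_{\beta_k}$ closed in $X_{\ge k}$, for each $k$. The $G$-equivariant Thom--Gysin long exact sequence reads
\[
\cdots\to H^{*-2d_{\beta_k}}\big(\EGx S_{\beta_k},\bQ_l\big)(-d_{\beta_k})\xrightarrow{\ \iota_*\ } H^*\big(\EGx X_{\ge k},\bQ_l\big)\to H^*\big(\EGx X_{>k},\bQ_l\big)\to\cdots,
\]
and it breaks into short exact sequences as soon as the Gysin map $\iota_*$ is injective for every $k$; by the projection formula this holds if and only if multiplication by the equivariant Euler class $e_G(N_{\beta_k})$ is injective on $H^*(\EGx S_{\beta_k},\bQ_l)$. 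Using the fibre-bundle presentation $S_\beta\cong G\times_{P_\beta}Y_\beta^{\sst}$ and restricting to the critical set of $\lambda_\beta$ and thence to a maximal torus $T$, this Euler class becomes, up to Tate twists, a product of characters each pairing strictly positively with $\lambda_\beta$ inside the polynomial ring $H^*(BT,\bQ_l)$ (which is a polynomial ring, hence an integral domain, by Theorem~\ref{thm:BG}), so it is a non-zero-divisor there; a Leray--Hirsch type argument, as in the proof of Proposition~\ref{prop:fib}, then propagates non-zero-divisibility back to $H^*(\EGx S_{\beta_k},\bQ_l)$. This establishes (b), so Theorem~\ref{thm:main2} applies and $X/\!/G$ is strongly pure with respect to $\bQ_l$.

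The step I expect to be the main obstacle is exactly this verification of equivariant perfection in positive characteristic with $\ell$-adic coefficients: Kirwan's argument is written in singular cohomology over $\bC$, so one must be sure that the stratification, its smoothness and fibre-bundle structure, the identification of the equivariant normal bundle, and above all the non-zero-divisor property of the equivariant Euler class all survive base change to $\bar{\bF}$ and the passage from singular to $\ell$-adic cohomology. No genuinely new ingredient is needed --- everything reduces to one-parameter subgroups, Hilbert--Mumford weights, and the polynomial structure of $H^*(BT,\bQ_l)$ --- but carrying out this translation cleanly is the substantive part of the argument.
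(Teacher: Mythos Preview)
Your proposal is correct and follows essentially the same route as the paper: reduce to Theorem~\ref{thm:main2}, invoke Kirwan's stratification for hypothesis~(b), and verify equivariant perfection via the Atiyah--Bott criterion that the equivariant Euler class of each normal bundle is a non-zero-divisor. The paper's argument differs only in packaging: rather than ``restricting to a maximal torus and propagating back via Leray--Hirsch'', it passes along the chain $H^*_G(S_\beta)\cong H^*_{P_\beta}(Y_\beta^{\sst})\cong H^*_{L_\beta}(Z_\beta^{\sst})$ and then isolates the non-zero-divisor statement as a standalone Lemma~\ref{lem:ab}, using that the specific subtorus $T_\beta$ (not the full maximal torus) acts trivially on $Z_\beta^{\sst}$, so that $H^*_{L_\beta}(Z_\beta^{\sst})$ factors as $H^*(BT_\beta)\otimes H^*_{T'}(Z_\beta^{\sst})$ and the Euler class is $\prod(\chi+\alpha_i)$ with $\chi\neq 0$.
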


\begin{proof}
Let $(S_{\beta})_{\beta \in \bf{B}}$ denote the stratification of $X$ constructed by
by Kirwan (see \cite[Theorem 13.5]{Kir84}). Each $S_{\beta}$ is a smooth, locally
closed $G$-stable subvariety of $X$, and the open stratum $S_0$ equals $X^{\sst}$.
Moreover, 
$$
S_{\beta} = G{\underset{P_{\beta}} \times} Y_{\beta}^{\sst}
$$
for some parabolic subgroup $P_{\beta}$ of $G$ and some smooth, closed 
$P_{\beta}$-stable subvariety $Y_{\beta}^{\sst} \subset S_{\beta}$. Finally, there 
exists a fibration $p_{\beta} : Y_{\beta}^{\sst} \to Z_{\beta}^{\sst}$ with fibers
affine spaces, where $Z_{\beta}^{\sst}$ denotes the subset of semi-stable points 
of a smooth, closed subvariety $Z_{\beta} \subset X$, stable under a Levi subgroup
$L_{\beta}$ of $P_{\beta}$; also, $Z_{\beta}$ is a union of connected components of 
the fixed point locus $X^{T_{\beta}}$, where $T_{\beta}$ is a subtorus of $G$ with
centralizer $L_{\beta}$. 

By the arguments of \cite[Part I]{Kir84}, this stratification is equivariantly
perfect. Specifically, by a criterion of Atiyah and Bott (see \cite[1.4]{AB83}),
it suffices to show that the equivariant Euler class of the normal bundle 
$N_{\beta}$ to $S_{\beta}$ in $X$ is not a zero divisor in $H^*_G(S_{\beta}, \bQ_l)$.
But
$$
H^*_G(S_{\beta},\bQ_l) \cong H^*_{P_{\beta}}(Y_{\beta}^{\sst},\bQ_l) \cong 
H^*_{L_{\beta}}(Y_{\beta}^{\sst},\bQ_l) \cong
H^*_{L_{\beta}}(Z_{\beta}^{\sst},\bQ_l)
$$
and this identifies the equivariant Euler class of $N_{\beta}$ with that of
the restriction $N_{\beta}\vert_{Z_{\beta}^{\sst}}$. But that restriction is
a quotient of the normal bundle $N'_{\beta}$ to $Z_{\beta}^{\sst}$ in $X$,
and the action of $T_{\beta}$ on each fiber of $N'_{\beta}$ has no non-zero fixed vector.
By the lemma below, it follows that the equivariant Euler class of $N'_{\beta}$ is not 
a zero divisor in $H^*_{L_{\beta}}(Z_{\beta}^{\sst},\bQ_l)$; thus, the same holds for the 
equivariant Euler class of $N_{\beta}$.
\end{proof}

\begin{lemma}\label{lem:ab}
Let $L$ be an algebraic group, $Z$ a $L$-variety, and $N$ a $L$-linearized vector
bundle on $Z$. Assume that a subtorus $T$ of $L$ acts trivially on $Z$ and fixes
no non-zero point in each fiber of $N$. Then the equivariant Euler class of $N$
is not a zero divisor in $H^*_L(Z,\bQ_l)$.
\end{lemma}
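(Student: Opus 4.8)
The plan is to reduce, in stages, to the case of an $L$-equivariant line bundle all of whose fibres carry a single non-zero character of $T$, and to settle that case by replacing the ``external'' scaling $\bG_m$ on the total space by a one-parameter subgroup sitting inside $L$ itself. First, two preliminary reductions. Since $L/L^o$ is finite and we use $\bQ_l$-coefficients, the covering $EL^o\times_{L^o}Z\to EL\times_L Z$ identifies $H^*_L(Z,\bQ_l)$ with $H^*_{L^o}(Z,\bQ_l)^{L/L^o}$ and $e_L(N)$ with $e_{L^o}(N)$; as a non-zero-divisor of a $\bQ_l$-algebra remains a non-zero-divisor of any subring of invariants, it suffices to treat $L$ connected. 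Next I would assume $T\subseteq Z(L)$, which is the situation of Corollary~\ref{cor:git} ($L=L_\beta$ a Levi subgroup, $T=T_\beta$ central in it); in general one reduces to this, but I would concentrate on the central case. With $T$ central, conjugation by $L$ fixes every character of $T$, so the $T$-weight decomposition $N=\bigoplus_\chi N_\chi$ is a decomposition into $L$-subbundles, and the hypothesis says exactly that $N_0=0$. Since $e_L(N)=\prod_{\chi\neq 0}e_L(N_\chi)$ and products of non-zero-divisors are non-zero-divisors, it is enough to show, for each fixed $\chi\neq 0$, that $e_L(N_\chi)$ is a non-zero-divisor in $H^*_L(Z,\bQ_l)$.

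\emph{Reduction to line bundles.} Fix $N:=N_\chi$ of rank $r$, so $T$ acts on each of its fibres by the single character $\chi\neq 0$, and argue by induction on $r$. Passing to the projective bundle $p:\bP(N)\to Z$, the equivariant projective bundle formula makes $p^*:H^*_L(Z,\bQ_l)\hookrightarrow H^*_L(\bP(N),\bQ_l)$ a split injection, while the tautological sequence $0\to\cO(-1)\to p^*N\to Q\to 0$ gives $p^*e_L(N)=c_1^L(\cO(-1))\cdot e_L(Q)$. As $T$ acts on $p^*N$ by the scalar $\chi$, it acts trivially on $\bP(N)$ and by $\chi\neq 0$ on the fibres of both $\cO(-1)$ and the rank-$(r-1)$ quotient $Q$; so the inductive hypothesis applies to $Q$ over $\bP(N)$, and we are reduced to the rank-one statement: if $M$ is an $L$-equivariant line bundle over a variety $Z$ with trivial $T$-action, and $T$ acts on the fibres of $M$ by $\chi\neq 0$, then $c_1^L(M)$ is a non-zero-divisor in $H^*_L(Z,\bQ_l)$.

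\emph{The line bundle case.} Pick a cocharacter $\lambda:\bG_m\to T$ with $\langle\chi,\lambda\rangle=m\neq 0$, put $S:=\lambda(\bG_m)$ (a central subtorus of $L$) and $L':=L/S$, and let $M^\times\to Z$ be the $\bG_m$-torsor obtained by deleting the zero section of $M$. Through $L$, the torus $S$ acts on $M^\times$ by the $m$-th power scaling in the fibres, hence with finite stabilisers $\mu_m$ and with geometric quotient $M^\times/S=Z$; a quotient-in-stages argument then yields $H^*_L(M^\times,\bQ_l)\cong H^*_{L'}(Z,\bQ_l)$. Moreover the $S$-invariant projection $M^\times\to Z$ and the map $L\to L'$ assemble into a commuting triangle of classifying spaces which shows that the composite $H^*_{L'}(Z,\bQ_l)\xrightarrow{\mathrm{pb}}H^*_L(Z,\bQ_l)\xrightarrow{\mathrm{pb}}H^*_L(M^\times,\bQ_l)$ is precisely that isomorphism; in particular the pull-back $H^*_L(Z,\bQ_l)\to H^*_L(M^\times,\bQ_l)$ along $M^\times\to Z$ is surjective. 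By the $L$-equivariant Gysin sequence of the $\bG_m$-torsor $M^\times\to Z$, this surjectivity is equivalent to the injectivity of $\cup\,c_1^L(M)$ on $H^*_L(Z,\bQ_l)$, i.e.\ to $c_1^L(M)$ being a non-zero-divisor. This finishes the proof.

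\emph{Main obstacle.} The crux is the line bundle case, and specifically the move of trading the external scaling $\bG_m$ on $M^\times$ for the internal subgroup $S\subseteq L$: this is what makes the (almost) free part of the torus action visible in $H^*_L$ and forces the pull-back to be surjective. The delicate points are the identification $H^*_L(M^\times,\bQ_l)\cong H^*_{L'}(Z,\bQ_l)$ (using that $S$ acts with finite stabilisers and geometric quotient $Z$) and the commutativity of the relevant square of classifying spaces; the centrality of $T$ is used both to make the weight decomposition $L$-equivariant and to make $S$ normal in $L$, so I would treat that case in full and merely indicate the general reduction, since the central case is the one needed for Corollary~\ref{cor:git}.
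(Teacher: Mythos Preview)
Your argument is correct for the case $T\subseteq Z(L)$ that you actually treat, and that case is indeed what Corollary~\ref{cor:git} requires. The route, however, is quite different from the paper's and considerably longer. The paper does not reduce to line bundles at all: it first restricts to a maximal torus $T_L\supseteq T$ via the injection $H^*_L(Z,\bQ_l)\hookrightarrow H^*_{T_L}(Z,\bQ_l)$, then writes $T_L\cong T\times T'$ and uses that $T$ acts trivially on $Z$ to obtain $H^*_{T_L}(Z,\bQ_l)\cong H^*(BT,\bQ_l)\otimes H^*_{T'}(Z,\bQ_l)$; after decomposing $N=\bigoplus_{\chi\neq 0}N_\chi$, the Euler class of each $N_\chi$ is just $\prod_i(\chi+\alpha_i)$, which is visibly a non-zero-divisor in a polynomial ring over $H^*_{T'}(Z,\bQ_l)$ because $\chi\neq 0$. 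This is a two-line computation once the reduction to a torus is made.

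Two things are worth noting. First, your ``general reduction'' to the central case, which you leave unspecified, is most naturally done exactly by the paper's maximal-torus restriction---and once you are in a torus, the paper's direct Chern-root argument is available and your projective-bundle/Gysin machinery becomes unnecessary. Second, your Gysin argument in the line-bundle step is a nice alternative proof of the rank-one case that avoids any explicit use of the polynomial structure of $H^*(BT,\bQ_l)$; it trades that algebraic fact for the geometric observation that the internal $\bG_m$ furnished by $\lambda$ already moves freely (up to $\mu_m$) on $M^\times$. So your approach is more hands-on and perhaps more conceptual at the line-bundle level, but the paper's is shorter, handles non-central $T$ without further comment, and bypasses the induction entirely.
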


\begin{proof}
We adapt the argument of \cite[13.4]{AB83}. Choose a maximal torus $T_L$ of $L$
containing $L$. Then the natural map $H^*_L(Z,\bQ_l) \to H^*_{T_L}(Z,\bQ_l)$
is injective; thus, we may replace $L$ with $T_L$, and assume that $L$ is a torus.
Now $L \cong T \times T'$ for some subtorus $T'$ of $L$. Therefore,
$H^*_L(Z,\bQ_l) \cong H^*(BT, \bQ_l) \otimes H^*_{T'}(Z,\bQ_l)$,
since $T$ fixes $Z$ pointwise. Moreover, $N$ decomposes as a direct sum of 
$L$-linearized vector bundles $N_{\chi}$ on which $T$ acts via a non-zero character 
$\chi$. Thus, we may further assume that $N = N_{\chi}$. Then the equivariant 
Euler class of $N$ satisfies $c_d^L(N) =  \prod_{i=1}^d (\chi + \alpha_i)$,
where $d$ denotes the rank of $N$, and $\alpha_i$ its $T'$-equivariant
Chern roots. This is a non-zero divisor in 
$H^*(BT, \bQ_l) \otimes H^*_{T'}(Z,\bQ_l)$ since $\chi \neq 0$.
\end{proof}

Corollary \ref{cor:git} applies for instance to the case where $X$ is a product 
of Grassmannians:
$$
X= \prod_{i=1}^m \Gr(r_i,n), \quad 
L = \boxtimes_{i=1}^m \cO_{\Gr(r_i,n)}(a_i)
$$
where $\Gr(r,n)$ denotes the Grassmannian of $r$-dimensional linear subspaces of 
projective $n$-space, and $\cO_{\Gr(r,n)}(a)$ denotes the $a$-th power of the 
line bundle associated with the Pl\"ucker embedding; here $G = \PGL(n+1)$ and 
$r_1,\ldots,r_m < n$, $a_1,\ldots,a_m$ are positive integers. Indeed, $X$ is 
clearly strongly pure; moreover, $X^{\sst} = X^{\st}$ for general values of 
$a_1,\ldots,a_m$ (see \cite[Section 11.1]{Do03}). The geometric quotient $X/\!/G$ 
is called the space of stable configurations; examples include moduli spaces
of $m$ ordered points in $\bP^n$.

\medskip

Presently we will provide an extension of Corollary \ref{cor:git} to local systems.
We need a general result on lifting group actions under finite \'etale covers.

\begin{proposition}\label{prop:lift}
Let $G$ be a connected algebraic group, $X$ a complete $G$-variety, and $f : X' \to X$ 
a finite \'etale cover. Then there exist a finite \'etale cover of connected algebraic 
groups $\pi: G' \to G$ and an action of $G'$ on $X'$ which lifts the given action of 
$G$ on $X$. If $G$ is reductive, then so is $G'$.
\end{proposition}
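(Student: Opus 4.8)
The plan is to construct $G'$ as the connected component of the identity in a suitable group of automorphisms of the cover. Concretely, consider the fibre product $X' \times_X (G \times X)$ over the action map $a: G \times X \to X$, compared with the finite étale cover $\id_G \times f : G \times X' \to G \times X$; we seek a lift $\tilde a : G \times X' \to X'$ of the composite $a \circ (\id_G \times f)$. Such a lift need not exist over $G$ itself, but it will exist after replacing $G$ by a connected finite étale cover $\pi : G' \to G$. The mechanism is standard: the obstruction to lifting lives in the finite set of sections of a finite étale cover $E \to G$ (the ``cover of lifts''), and one passes to a connected component $G'$ of $E$ — or rather, to make $G'$ a group, to the connected component of $E \times_G E \times_G \cdots$ containing a point over $e_G$, using that $X$ is complete so that $f_*\cO_{X'}$ is a coherent sheaf of $\cO_X$-algebras of constant finite rank and the formation of ``lifts of the action'' is represented by a scheme finite over $G$.

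Here is the sequence of steps I would carry out. First, I would set up the functor on $G$-schemes $S \mapsto \{\text{morphisms } \alpha_S : X'_S \to X'_S \text{ over } S \text{ lifting the action of } S \text{ on } X_S\}$, and argue it is represented by a scheme $H$ finite over $G$: since $X$ is complete and $f$ is finite étale, $f_*\cO_{X'}$ is locally free of finite rank, and the condition of being a lift of a given $X$-automorphism is a closed-and-open condition cutting out a finite étale piece; the fibre of $H$ over $g \in G(\bar\bF)$ is the (nonempty, since $f$ is étale and $X'$ is a cover — here one uses that the lift exists over the algebraically closed fibre) finite set of lifts of $g \cdot (-)$. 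Second, I would equip $H$ with a multiplication: composition of lifts gives $H \times_G H \to H$ covering multiplication $G \times G \to G$, and the identity lift over $e_G$ gives a section; this makes $H \to G$ a morphism of group schemes with finite kernel. Third, I would let $G' := H^o$ be the neutral component: it is a connected algebraic group (smooth, since $H \to G$ is finite étale and $G$ is smooth), $\pi := (H \to G)|_{G'}$ is a finite étale isogeny onto $G$ because its image is an open subgroup of the connected group $G$, hence all of $G$, and by construction $G'$ acts on $X'$ lifting the $G$-action on $X$. Fourth, for the reductivity clause: $G' \to G$ is an isogeny, so $\Lie(G') \cong \Lie(G)$ as Lie algebras and the unipotent radical of $G'$ maps isomorphically to that of $G$; hence $R_u(G') = 0$ when $R_u(G) = 0$, i.e. $G'$ is reductive whenever $G$ is.

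The main obstacle I anticipate is the representability and finiteness in the first step — showing cleanly that ``lifts of the action morphism'' form a scheme finite étale over $G$ — and in particular verifying nonemptiness of the fibres, i.e. that for each $g \in G(\bar\bF)$ the automorphism $g\cdot(-)$ of $X_{\bar\bF}$ does lift to $X'_{\bar\bF}$. This is where completeness of $X$ and connectedness of $G$ both enter: connectedness of $G$ forces $g \cdot (-)$ to be isotopic to the identity, so it preserves the topological/numerical invariants that classify the cover $f$, and completeness guarantees that the relevant $\pi_1$-theoretic comparison (or, algebraically, the comparison of the étale algebras $f_*\cO_{X'}$ under pullback by $g$) behaves well; one should be able to invoke the étale fundamental group / Galois correspondence, noting that $(g\cdot(-))^* f_*\cO_{X'} \cong f_*\cO_{X'}$ since $g$ lies in the identity component and the set of isomorphism classes of étale covers of fixed degree is discrete. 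Once nonemptiness is in hand, the rest is formal group-scheme bookkeeping.
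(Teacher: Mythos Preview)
Your proposal is correct and follows essentially the same strategy as the paper: construct a group scheme of lifts of the $G$-action sitting finitely (and \'etale) over $G$, then take its neutral component. The paper packages this as the pullback of $G \to \Aut(X)$ along the natural homomorphism $f_* : \Aut(X',X) \to \Aut(X)$ (whose kernel $\Aut(X'/X)$ is finite reduced since its Lie algebra vanishes), and disposes of the nonemptiness step you correctly flagged as the crux by invoking \cite[Expos\'e~X, Corollaire~1.9]{SGA1}: for $X$ complete, a finite \'etale cover of $G \times X$ has constant isomorphism type over the connected base $G$, so each $g^*X' \cong X'$ and every $g$ lifts.
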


\begin{proof} 
Let $\alpha: G \times X \to X$ denote the action map and let 
$F: Z' \to Z = G \times X$ denote the pull-back of $f$ by $\alpha$. 
Then $F$ is again a finite \'etale cover. Since $X$ is complete,  
\cite[Expos\'e X, Corollaire 1.9]{SGA1} shows that the coverings $F_g : X'_g \to X$
are all isomorphic, where $g \in G(k)$, $X$ is identified with the subscheme
$\{g\} \times X$ of $G \times X$, and $F_g$ denotes the pull-back of $F$ to 
$\{g\} \times X$.

Therefore, for each $g \in G(k)$, we have an isomorphism
$X' \to X'_g$ of schemes over $X$. On the other hand, since $\alpha$ is the
action map, then $X'_g$ is the pull-back of $X'$ by the automorphism $g$ of $X$.
Thus, we obtain an isomorphism $X'_g \to X'$  lifting $g : X \to X$.
Composing with the isomorphism $X' \to X'_g$ yields an automorphism of $X'$
which lifts $g$. Therefore, every closed point of $G$ lifts to an automorphism 
of $X'$. 

Now recall that the functor of automorphisms of $X$ is represented by a group 
scheme $\Aut(X)$, locally of finite type. Also, we have a group scheme $\Aut(X',X)$ 
of pairs of compatible automorphisms of $X'$ and $X$, equipped with a homomorphism
$$
f_* : \Aut(X',X) \longrightarrow \Aut(X).
$$
The kernel of $f_*$ is the group scheme of relative automorphisms $\Aut(X'/X)$;
this is a finite reduced group scheme, since its Lie algebra (the derivations of 
$\cO_{X'}$ over $\cO_X$) is trivial. 
On the other hand, we just showed that the image of $f_*$ contains the image of 
$G$ in $\Aut(X)$. We may now take the pull-back of $G \to \Aut(X)$ by $f_*$ 
to obtain a finite \'etale cover $\pi : G' \to G$ such that $G'$ acts on $X'$ 
by lifting the $G$-action on $X$. In case $G'$ is not connected, we replace it 
by its neutral component to obtain the desired cover.

To complete the proof, note that the unipotent radical $R_u(G')$ is a finite cover 
of $R_u(G)$. If $G$ is reductive, then $R_u(G')$ is a finite scheme, and hence a 
point. In other words, $G'$ is reductive. 
\end{proof}

\begin{theorem}\label{thm:main3}
With the assumptions of Corollary \ref{cor:git}, let $\cL =\{ \cL_{\nu}|\nu\}$ denote a 
$G$-equivariant $l$-adic local system on $X$ which satisfies the following two assumptions:

\medskip

\noindent
{\rm (c)} There exists some finite \'etale cover $X'$
of $X$ on which $\cL$ is the constant $G$-equivariant local system (i.e. each $\cL_{\nu}$ is a 
$G$-equivariant constant sheaf).

\medskip

\noindent
{\rm (d)} The restriction of $\cL$ to $X^{\sst}$ is the pull-back of an $l$-adic local system 
$\cM$ on $X/\!/G$.
 
\medskip

If $X$ is weakly pure (strongly pure) with respect to $\cL$, then so is $X/\!/G$ 
with respect to $\cM$.
\end{theorem}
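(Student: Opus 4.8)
The plan is to mimic the proof of Theorem~\ref{thm:main2}, replacing the constant sheaf $\bQ_l$ by $\cL$ throughout, and using hypotheses (c) and (d) at exactly the two places where properties special to $\bQ_l$ were used. I describe the strongly pure case; the weakly pure one is obtained by the same argument, invoking the weakly pure halves of Theorem~\ref{thm:BG} and Lemma~\ref{lem:vec}. First I would establish the analogue of the isomorphism (\ref{eqn:coh.geom.quot}), namely $H^*(X/\!/G,\cM)\simeq H^*(\EGx X^{\sst},\cL)$. Hypothesis (d) gives $\cL|_{X^{\sst}}=\pi^*\cM$; unwinding the definition of an equivariant local system and using that $\pi$ is $G$-invariant, one checks that $\cL$, viewed on the simplicial scheme $\EGx X^{\sst}$, equals $p^*\cM$, where $p\colon\EGx X^{\sst}\to X^{\sst}/G=X/\!/G$ is the structure morphism. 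The projection formula then yields $Rp_*\cL\simeq\cM\otimes^L Rp_*\bQ_l$, and since (as in the proof of Theorem~\ref{thm:main2}, the stabilizers being finite) $R^tp_*\bQ_l=0$ for $t>0$ and $R^0p_*\bQ_l=\bQ_l$, we get $Rp_*\cL\simeq\cM$, hence the isomorphism.

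Next I would show the pull-back $H^*(\EGx X,\cL)\to H^*(\EGx X^{\sst},\cL)$ is surjective. Here hypothesis (c) and Proposition~\ref{prop:lift} come in: since $X$ is complete, a finite \'etale cover $f\colon X'\to X$ with $\cL|_{X'}\cong\bQ_l^{\oplus r}$ constant carries a lifted action of a finite \'etale cover $G'\to G$ of connected reductive groups. This equips $f_*\bQ_l$ with a canonical $G$-equivariant structure and exhibits $\cL$ as a $G$-equivariant direct summand of $(f_*\bQ_l)^{\oplus r}$, the splitting given by the normalized trace of $f$ (available since $\deg f$ is invertible in $\bQ_l$). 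As $f$ is finite, $H^*_G(X,f_*\bQ_l)\cong H^*_{G'}(X',\bQ_l)$, and similarly over $X^{\sst}$; since $G'\to G$ is an isogeny it preserves (semi)stability, so $X'^{\sst}=f^{-1}(X^{\sst})$ and the Kirwan stratification of $X'$ is the pull-back of that of $X$, which is $G'$-equivariantly perfect by the argument of Corollary~\ref{cor:git} (that argument uses only equivariant perfection, not splitness of the acting group). Hence $H^*_{G'}(X',\bQ_l)\to H^*_{G'}(X'^{\sst},\bQ_l)$ is surjective, and so is its $G$-equivariant direct summand $H^*_G(X,\cL)\to H^*_G(X^{\sst},\cL)$. (Alternatively one adapts the Atiyah--Bott criterion and Lemma~\ref{lem:ab} directly to $\cL$-coefficients: after restricting to a maximal torus, $H^*_L(Z,\cL)$ becomes a module over $H^*(BT,\bQ_l)\otimes H^*_{T'}(Z,\bQ_l)$ --- using that a $T$-equivariant local system on a space with trivial $T$-action is just a local system, as $\pi_1(BT_{\et}){\widehat{}}=0$ --- on which the equivariant Euler class $\prod_i(\chi+\alpha_i)$, $\chi\ne 0$, acts as a non-zero-divisor.)

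By the two previous steps and Lemma~\ref{lem:vec}(i), it now suffices to prove that $H^*(\EGx X,\cL)=H^*_G(X,\cL)$ is strongly pure, and for this I would run the Leray spectral sequence of the locally trivial fibration $\pi\colon\EGx X\to BG$ with fibre $X$. By hypothesis (c), $\cL$ becomes constant on the finite \'etale cover $\EGx X'\to\EGx X$ (pass to its Galois closure if needed), so by Proposition~\ref{prop:fibration}(ii) each $R^t\pi_*\cL$ is an $l$-adic local system on $BG$; since $G$ is connected, $\pi_1(BG_{\et}){\widehat{}}=0$, hence $R^t\pi_*\cL$ is constant with stalk $H^t(X,\cL)$, and $E_2^{s,t}=H^s(BG,\bQ_l)\otimes H^t(X,\cL)$. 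By Theorem~\ref{thm:BG} (using that $G$ is split), the assumption that $X$ is strongly pure with respect to $\cL$, and Lemma~\ref{lem:vec}(iii), the $E_2$-terms are strongly pure; hence so are the $E_\infty$-terms and the abutment.

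I expect the main obstacle to be the surjectivity step: one must check carefully that $f_*\bQ_l$ inherits a $G$-equivariant structure from the lifted $G'$-action, that $\cL$ splits off $G$-equivariantly and compatibly with restriction to the semistable locus, and that the Kirwan stratification is compatible with the cover --- or, on the alternative route, redo the normal-bundle analysis of Corollary~\ref{cor:git} and Lemma~\ref{lem:ab} with local-system coefficients.
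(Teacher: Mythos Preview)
Your proposal is correct and follows essentially the same approach as the paper's own proof: both use Proposition~\ref{prop:lift} to lift the $G$-action to a cover $G'$ of $X'$, exhibit $\cL$ as a $G$-equivariant direct summand of $f_*(\bQ_l^{\oplus n})$ via the trace map, deduce surjectivity of restriction to the semistable locus from equivariant perfection of the pulled-back Kirwan stratification on $X'$, and conclude by the same Leray spectral sequence over $BG$ combined with assumption~(d). Your treatment is in fact slightly more detailed than the paper's (you spell out the projection-formula argument for $H^*_G(X^{\sst},\cL)\cong H^*(X/\!/G,\cM)$ and offer an alternative direct Atiyah--Bott route with $\cL$-coefficients), but the skeleton is identical.
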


\begin{proof} 
Let $f: X' \to X$ denote the given finite \'etale cover, $L$ the given ample $G$-linearized line bundle
on $X$, and $\pi: G' \to G$ the cover obtained in Proposition \ref{prop:lift}. Then $L' := f^*(L)$ is 
an ample line bundle on $X'$, and we may assume (replacing $G'$ with a further covering) that $L'$ is 
$G'$-linearized. It follows that the strata for the Kirwan stratification of $X'$ (with respect to $G'$ 
and $L'$) are exactly the pull-backs of the strata of $X$. For this stratification, the long exact
sequence in $G'$-equivariant cohomology with respect to the constant local system 
breaks up into short exact sequences.
 
Also, the composition of the maps $\cL \to f_* f^*\cL \to \cL$ of sheaves on $EG{\underset G \times}X$ 
is an isomorphism, where the first map is given by adjunction, and the second one is the trace map. 
In view of assumption (c), it follows that $\cL$ is a direct factor of $f_* (\bQ_l^{\oplus ^n})$. 
Now consider the long exact sequence in $G$-equivariant cohomology with respect to the local system 
$\cL$ provided by the stratification of $X$: the terms in this sequence are summands in the corresponding 
long exact sequence in the $G'$-equivariant cohomology of $X'$. Therefore, this breaks up into short exact 
sequences, and hence the restriction map
\begin{equation}\label{eqn:surj}
H^*_G(X, \cL) \longrightarrow H^*_G(X^{\sst}, \cL)
\end{equation} 
is surjective.

We now adapt the proof of Theorem \ref{thm:main2}. 
Recall the spectral sequence in equivariant cohomology: 
$E_2^{s, t} = H^s \big( BG, R^t\pi_*(\cL) \big) \Ra H^{s+t}_G(X, \cL)$ where 
$\pi: EG{\underset G \times}X \to BG$ denotes the projection. Since 
$G$ is connected, the local system $R^t\pi_*(\cL)$ is constant on $BG$, so that
$E_2^{s,t} = H^s(BG, \bQ_l) \otimes H^t(X, \cL)$. The hypothesis of weak purity 
(strong purity) of $X$ and Lemma~\ref{lem:vec} (iii) show that the $E_2$-terms 
are weakly pure (strongly pure, \res) and hence so are the abutments $H^{s+t}_G(X, \cL)$.
By the surjectivity of the map in (\ref{eqn:surj}), $H_G^*(X^{\sst}, \cL)$ is also
weakly pure (strongly pure, \res). But 
$H^*_G(X^{\sst}, \cL) \cong H^*(X/\!/G, \cM)$ in view of assumption (d).
This completes the proof of the theorem. 
\end{proof}

\begin{remarks}\label{rem:git}
{\rm (i)} The above assumption (d) is satisfied whenever the quotient morphism 
$\pi: X^{\sst} \to X^{\sst}/\!/G$ is a $G$-torsor. Equivalently, the stabilizer of 
any semi-stable point is trivial as a subgroup scheme.

\medskip

\noindent
{\rm (ii)} If the unstable locus $X \setminus X^{\sst}$ has codimension $\geq 2$, then the
fundamental groups of $X$ and $X^{\sst}$ are naturally isomorphic, by 
\cite[Expos\'e X, Corollaire 3.3]{SGA1}. Thus, the local systems on $X$ and $X^{\sst}$ are in 
bijective correspondence via restriction, and the same holds for the $G$-equivariant local
systems. In particular, given any local system $\cM$ on $X/\!/G$, the pull-back
$\pi^*\cM$ extends to a unique $G$-equivariant local system on $X$. 

If in addition $X$ satisfies the assumptions of Corollary \ref{cor:git}, then we see 
that $X/\!/G$ is weakly (strongly) pure with respect to all local systems associated
with finite \'etale covers, if so is $X$.
\end{remarks}

\section{\bf The $l$-adic cohomology of quiver-moduli}

In this section, we begin by recalling some basic facts about quiver representations and the associated
moduli spaces. While the material we discuss is well-known, it seems to be scattered in the 
literature (see e.g. \cite{Kin94, Re08}): we summarize the relevant details from the point of 
view of GIT.

A {\it quiver} $Q$ is a  finite directed graph, possibly with oriented cycles.
That is, $Q$ is given by a finite set of vertices ${ I}$ 
(often also denoted $Q_0$) and a finite set of arrows $Q_1$. The arrows will be 
denoted by $\alpha:i\rightarrow j$. We will denote by 
$\bZ\bfI$ the free abelian group generated by $I$; the basis consisting of elements 
of $I$ will be denoted by $\bfI$. An element $\bfd \in \bZ\bfI$ will be written as 
$\bfd = \sum_{i\in I} d_i \, \bfi $. 

Let ${\rm Mod}(\bF Q)$ denote the abelian category of finite-dimensional representations 
of $Q$ over the finite field $\bF$ (or, equivalently, finite-dimensional representations 
of the path algebra $\bF Q$). Its objects are thus given by tuples 
\be \begin{equation}
\label{quiver.rin}
M=\big( (M_i)_{i\in I},(M_\alpha:M_i\rightarrow M_j)_{\alpha:i\rightarrow j} \big)
\end{equation} \ee

\medskip 

\noindent
of finite-dimensional $\bF$-vector spaces and $\bF$-linear maps between them. 

The {\it dimension vector} $\bfdim(M) \in \bN \bfI$ is defined as 
$\bfdim(M) = \sum_{i\in I}\dim_{\bF }(M_i) \, \bfi$. 
The {\it dimension} of $M$ will be defined to be $\sum_{i \in I} \dim_{\bF }(M_i)$, 
i.e. the sum of the dimensions of the $\bF$-vector spaces $M_i$. This will be denoted 
$\dim(M)$.

We denote by $\Hom_{\bF Q}(M,N)$  the $\bF$-vector space of homomorphisms  
between two representations $M, N\in{\rm Mod}( \bF Q )$.

We will fix a quiver $Q$ and a dimension vector $\bfd = \sum_i d_i \, \bfi$,
and consider the affine space 
$$
X=R(Q, \bfd) :=  \bigoplus_{\alpha:i\rightarrow j} \Hom_{\bF}(\bF^{d_i},\bF^{d_j}).
$$ 
Its points $M=(M_\alpha)_\alpha$ obviously parametrize representations of $Q$ with 
dimension vector $\bfd$. (Strictly speaking only the $\bF$-rational points of $X$ 
define such representations; in general, a point of $X$ over a field extension
$k$ of $\bF$ will define only a representation of $Q$ over $k$ with dimension
vector $\bfd$. We will however, ignore this issue for the most part.) 

The connected reductive algebraic group 
$$
G(Q, \bfd): = \prod_{i\in I} \GL(d_i)
$$ 
acts on $R(Q, \bfd)$ via base change:
$$
\big( (g_i) \cdot (M_\alpha) \big)_\alpha=(g_j M_\alpha g_i^{-1})_{\alpha:i\rightarrow j}.
$$
By definition, the orbits $G(Q, \bfd)\cdot  M$ in 
$R(Q, \bfd)$ correspond bijectively to the isomorphism classes 
$[M]$ of $\bF$-representations of $Q$ of dimension vector $\bfd$. 
We will set for simplicity $G := G(Q,\bfd)$ and $X := R(Q,\bfd)$.
For any $\bar{\bF}$-rational point $M$ of $X$, the stabilizer
$G_M = \Aut_{\bar{\bF}Q}(M)$ is smooth and connected, since it is open in
the affine space $\End_{\bar{\bF}Q}(M)$. Also, note that the
subgroup of $G$ consisting of tuples $(t \id_{d_i})_{i \in I}$, $t \in \bG_m$, 
is a central one-dimensional torus and acts trivially on $X$; moreover, the 
quotient $PG(Q,\bfd)$ by that subgroup acts faithfully.
So one may replace $G$ henceforth by $PG(Q, \bfd)$.

\medskip

We next proceed to consider certain geometric quotients associated to 
the above action. For this, it is important to choose a character of $G$, 
that is, a morphism of algebraic groups $\chi : G \rightarrow \bG_m$. 
If $\O_{X}$ denotes the trivial line bundle on $X$, we will linearize it 
by using the character $\chi^{-1}$: the resulting $G$-linearized line bundle 
on $X$ will be denoted ${\rm L}_{\chi}$. Since this bundle is trivial on
forgetting the $G$-action, a global section of ${\rm L}_{\chi}^n$, for any $n \ge 1$, 
corresponds to $f \otimes _{\bF } t$, where $f \in \bF[X]$,
$t \in \bF [{\mathbb A}^1]$, and $f \otimes_{\bF } t$ denotes their
tensor product over $\bF$.
Now $G$ acts on such pairs $(f, t)$ by $g.(f, t) = (f\circ g, \chi(g)^{-n}t)$ 
where $f\circ g$ denotes the regular function defined by $(f \circ g)(x) = f(gx)$.
Therefore, such a global section will be $G$-invariant precisely when $f$ is 
 {\it $\chi$-semi-invariant with weight $n$}, i.e.  
$$
f(gx)=\chi^n(g)f(x)\mbox{ for all } g\in G \mbox{ and all } x \in X.
$$

\medskip

We will denote the set of all such global sections by $\bF [X]^{G, \chi^n}$.
Therefore, the corresponding geometric quotient (see e.g. 
\cite[Section 8.1]{Do03}) will be defined by
$$
X/\!/G  = \Proj \big( \bigoplus _{n \ge 0} \bF [X]^{G, \chi^n} \big).
$$

\medskip 

Next observe that the only characters of $GL_n$ are powers of the determinant map; 
therefore, the only characters of the group $PG(Q, \bfd)$ are of the form
$$
(g_i)_i \longmapsto \prod_{i\in I}\det(g_i)^{m_i},
$$
for a tuple $(m_i)_{i\in I}$ such that $\sum_{i\in I}m_id_i=0$ to guarantee 
well-definedness on $PG(Q, \bfd)$.

\medskip

Thus, one may choose a linear function 
$\Theta:\bZ \bfI \rightarrow \bZ$ and associate to it a character 
$$
\chi_\Theta((g_i)_i) := \prod_{i\in I} \det(g_i)^{\Theta(\bfd)-\dim (\bfd)\cdot \Theta (\bfi)}
$$
of $PG(Q, \bfd)$. For convenience, we will call $\Theta$ itself a {\it character}. 
(This adjustment of $\Theta$ by a suitable multiple of the function 
$\dim : (d_i) \mapsto \sum_i d_i$ has the advantage that a fixed $\Theta$ can be used 
to formulate stability for arbitrary dimension vectors, and not only those with 
$\Theta(\bfd)=0$. However, this notation is a bit different from the one adopted in 
\cite{Kin94}.)

\medskip

Associated to each character $\Theta$, we define the {\it slope}  $\mu$. This is the function 
defined by $\mu(\bfd) = \frac{\Theta (\bfd)}{\dim (\bfd)}$.
With this framework, one may invoke the usual definitions of geometric invariant theory 
to define the semi-stable points and stable points. Observe that now
a point $x \in R(Q, \bfd)$ will be semi-stable (stable) precisely when there exists a 
$G$-invariant global section of some positive power of the above line bundle that does not 
vanish at $x$ (when, in addition, the orbit of $x$ is closed in the semi-stable locus, 
and the stabilizer at $x$ is finite). Since all stabilizers are smooth and connected, 
the latter condition is equivalent to the stabilizer being trivial.

\medskip

The corresponding varieties of $\Theta$-semi-stable and stable points with respect to the line bundle 
${\rm L}_{\chi}$ will be denoted by
$$ 
R(Q, \bfd)^{\sst} = R(Q, \bfd)^{\Theta-\sst}=  R(Q, \bfd)^{\Theta-\sst}
$$ 
and
$$ 
R(Q, \bfd)^{\st} =  R(Q, \bfd)^{\Theta-\st} =  R(Q, \bfd)^{\Theta-\st}.
$$ 
These are open subvarieties of $X$, possibly empty.
The corresponding quotient varieties will be denoted as follows:
$$
M^{\Theta-\st}(Q, \bfd) = R(Q, \bfd)^{\Theta-\st}/G \mbox{ and } 
M^{\Theta-\sst}(Q, \bfd) = R(Q, \bfd)^{\rm \Theta-\sst}/\!/G = X/\!/G.
$$ 
Observe that 
 the variety $M^{\rm \Theta-\st}(Q, \bfd)$ parametrizes isomorphism classes 
of $\Theta$-stable representations of $Q$ with dimension vector $\bfd$.

\medskip

An orbit $G \cdot M = PG(Q, \bfd) \cdot M$ is closed in $X$ if and only if the 
corresponding representation $M$ is semi-simple, by \cite{Ar69}. The quotient variety 
$X/\!/G$ (for the trivial line bundle with the trivial linearization)
therefore parametrizes isomorphism classes of semi-simple representations of $Q$ 
of dimension vector $\bfd$. It will be denoted by $M^{\rm ssimp}(Q, \bfd)$ and called 
the moduli space of semi-simple representations.

\medskip

We now may state some results taken from \cite[Sections 3 and 4]{Kin94}.

\begin{proposition}\label{prop:king}
With the above notation and assumptions, the following assertions hold:

\begin{itemize}

\item The variety $M^{\rm ssimp}(Q, \bfd)$ is affine.

\item There is a natural projective morphism 
$M^{\Theta-\sst}(Q, \bfd) \rightarrow M^{\rm ssimp}(Q, \bfd)$.
In particular, if the quiver has no oriented cycles, then every $G$-invariant regular
functions on $R(Q, \bfd)$ is constant: therefore, in this case 
$M^{\Theta-\sst}(Q, \bfd)$ is a projective variety.

\item The quotient map $R(Q, \bfd)^{\Theta-\st} \rightarrow M^{\Theta-\st}(Q, \bfd)$ 
is a $PG(Q, \bfd)$-torsor.

\end{itemize}

\end{proposition}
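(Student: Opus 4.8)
All three assertions are standard consequences of geometric invariant theory for the reductive group $G := PG(Q,\bfd)$ acting on the affine space $X := R(Q,\bfd)$, linearized by the character $\chi_\Theta$; the plan is essentially to reassemble the relevant pieces of \cite[Sections 3 and 4]{Kin94}. Since $G$ is reductive and $X$ is affine, the invariant ring $\bF[X]^G$ (which equals $\bF[X]^{G(Q,\bfd)}$, the central torus acting trivially) is a finitely generated $\bF$-algebra, so $M^{\rm ssimp}(Q,\bfd) = X/\!/G = \Spec\bF[X]^G$ is an affine variety; this is the first assertion. (The identification of its points with isomorphism classes of semisimple representations then follows by combining the bijection between the closed points of an affine GIT quotient and the closed $G$-orbits with the description of the latter in \cite{Ar69}, but this is not needed for what follows.)

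For the second assertion, set $A := \bigoplus_{n\ge 0}\bF[X]^{G,\chi_\Theta^n}$, the section ring of $\mathrm L_{\chi_\Theta}$, so that $M^{\Theta-\sst}(Q,\bfd)=\Proj A$. Its degree-zero component is $A_0 = \bF[X]^G$, and $A$ is a finitely generated $A_0$-algebra: this follows from reductivity of $G$ applied to its action on $X\times\bA^1$ with the linearization given by $\chi_\Theta$. Hence the canonical structure morphism $\Proj A \to \Spec A_0$, that is $M^{\Theta-\sst}(Q,\bfd)\to M^{\rm ssimp}(Q,\bfd)$, is projective. If $Q$ has no oriented cycles I would fix a function $c: I \to \bZ$ with $c(j) > c(i)$ whenever there is an arrow $i \to j$ (possible by topologically sorting the vertices) and consider the one-parameter subgroup $t\mapsto(t^{c(i)}\id_{d_i})_{i\in I}$ of $G(Q,\bfd)$. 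Under it a point $(M_\alpha)$ is rescaled as $M_\alpha \mapsto t^{\,c(j)-c(i)}M_\alpha$ for $\alpha: i\to j$, so its limit as $t\to 0$ is the zero representation; thus $0$ lies in the closure of every $G$-orbit. A $G$-invariant regular function is constant on orbit closures, hence takes everywhere the value it takes at $0$, so $\bF[X]^G=\bF$. Then $A_0 = \bF$, $A$ is finitely generated over $\bF$, and $M^{\Theta-\sst}(Q,\bfd) = \Proj A$ is a projective variety.

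For the third assertion, I would first verify that $G = PG(Q,\bfd)$ acts freely on the stable locus $R(Q,\bfd)^{\Theta-\st}$. For any geometric point $M$, the stabilizer of $M$ in $G(Q,\bfd)$ is $\Aut_{\bar\bF Q}(M)$, which is smooth and connected (it is open in the affine space $\End_{\bar\bF Q}(M)$); for a $\Theta$-stable $M$ this group is finite, hence trivial, modulo the central torus $\bG_m$, so the stabilizer of $M$ in $PG(Q,\bfd)$ is the trivial group scheme. Moreover, a $\Theta$-stable point has a closed $G$-orbit inside the semistable locus, so the $G$-action on $R(Q,\bfd)^{\Theta-\st}$ is proper. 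By Mumford's results on geometric quotients (\cite[Chapters 0 and 1]{MFK94}), a free and proper action admitting a geometric quotient yields a principal bundle; equivalently, using the smoothness of the stabilizers one may invoke Luna's \'etale slice theorem. In either case the quotient map $R(Q,\bfd)^{\Theta-\st}\to M^{\Theta-\st}(Q,\bfd)$ is a $PG(Q,\bfd)$-torsor, \'etale-locally (indeed fppf-locally) trivial.

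The step I expect to be the main obstacle is this last one, namely upgrading the free geometric quotient to an honest torsor: one must show that the quotient morphism is faithfully flat and that $PG(Q,\bfd)\times R(Q,\bfd)^{\Theta-\st}\to R(Q,\bfd)^{\Theta-\st}\times_{M^{\Theta-\st}(Q,\bfd)}R(Q,\bfd)^{\Theta-\st}$ is an isomorphism. This is precisely where one needs the GIT statement on closed, free actions (or the slice theorem, available here because the stabilizers are smooth). By contrast, the finite-generation input for the second assertion is a routine appeal to reductivity, and the no-cycles contraction argument is elementary.
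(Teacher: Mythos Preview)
Your proposal is correct, and in fact goes further than the paper does: the paper gives no proof at all for this proposition, simply introducing it as ``results taken from \cite[Sections 3 and 4]{Kin94}''. Your write-up is a faithful and accurate reconstruction of the underlying GIT arguments. A few remarks on the comparison and on points of detail:

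For the first two assertions your argument is the standard one and matches what one finds in King's paper: the affine GIT quotient is $\Spec$ of the invariant ring, and the $\Proj$ of the graded ring of semi-invariants maps projectively to $\Spec$ of its degree-zero part. Your contraction argument for the no-cycles case (choosing a grading $c$ on the vertices compatible with the arrows and letting the associated one-parameter subgroup degenerate every representation to $0$) is a clean way to see that $\bF[X]^G=\bF$; King phrases this slightly differently, but the content is the same.

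For the third assertion, the paper has already recorded (just before the proposition) that stabilizers in $G(Q,\bfd)$ are smooth and connected, so that stable points have trivial stabilizer in $PG(Q,\bfd)$; your argument reproduces this. Your concern about upgrading the free geometric quotient to a torsor is well placed but easily addressed: since all stabilizers are trivial as group schemes, \cite[Proposition 0.9]{MFK94} applies directly and gives that the quotient map is a principal $PG(Q,\bfd)$-bundle; alternatively, Luna's slice theorem works as you indicate. Either route suffices, and this is indeed the only step requiring more than bookkeeping.
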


 The following characterization of $\Theta$-(semi-)stable points in $R(Q, \bfd)$ 
is also given in \cite{Kin94}:
 
\begin{theorem} 
A representation $M\in R(Q, \bfd)$ is $\Theta$-semi-stable if and only if 
$\mu(N) \leq \mu(M)$ for all non-zero sub-representations $N$ of $M$. 
The representation $M$ is $\Theta$-stable if and only if $\mu(N) < \mu(M)$ 
for all non-zero proper sub-representations $N$ of $M$.
\end{theorem}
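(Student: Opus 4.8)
The plan is to read the criterion off the numerical (Hilbert--Mumford) criterion for semi-stability and stability applied to the $G$-linearized line bundle ${\rm L}_{\chi_\Theta}$ on $X = R(Q,\bfd)$, in the style of King. Since the semi-stable and stable loci are open subschemes of $X$ defined over $\bF$ whose formation commutes with field extension, it is enough to argue over $\bar\bF$, and only at the very end to return to sub-representations defined over $\bF$. Because the underlying line bundle of ${\rm L}_{\chi_\Theta}$ is trivial, for a one-parameter subgroup $\lambda : \bG_m \to G$ (equivalently, of $PG(Q,\bfd)$ modulo the central torus) for which $\lim_{t\to 0}\lambda(t)\cdot M$ exists, the Mumford weight depends only on $\chi_\Theta$ and $\lambda$ and is governed by $\chi_\Theta(\lambda(t)) = t^{\langle\chi_\Theta,\lambda\rangle}$: a $\chi_\Theta$-semi-invariant $f$ of weight $n$ is a regular function on the affine variety $X$, so regularity of $t \mapsto f(\lambda(t)\cdot M) = t^{n\langle\chi_\Theta,\lambda\rangle}f(M)$ at $t = 0$ forces $\langle\chi_\Theta,\lambda\rangle \ge 0$ whenever $f(M) \neq 0$. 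Combined with the (substantive) converse half of the numerical criterion, this shows that $M$ is $\Theta$-semi-stable if and only if $\langle\chi_\Theta,\lambda\rangle \ge 0$ for every such $\lambda$, and — using that all stabilizers of the $PG(Q,\bfd)$-action are smooth and connected, so that a finite stabilizer is trivial — that $M$ is $\Theta$-stable if and only if in addition $\langle\chi_\Theta,\lambda\rangle > 0$ for every such $\lambda$ whose image is not contained in the central torus $\{(t\,\id_{d_i})_i\}$ acting trivially on $X$.

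The key step is the dictionary between one-parameter subgroups admitting a limit at $M$ and filtrations of $M$ by sub-representations. A one-parameter subgroup $\lambda = (\lambda_i)_i$ of $G = \prod_{i\in I}\GL(d_i)$ gives a $\bZ$-grading $\bF^{d_i} = \bigoplus_{n} V_i^{(n)}$ of each $\bF^{d_i}$ into $\lambda_i$-weight spaces; put $F^{\ge n}_i := \bigoplus_{m \ge n} V_i^{(m)}$. One checks directly that $\lim_{t\to 0}\lambda(t)\cdot M$ exists exactly when, for every arrow $\alpha : i \to j$, the map $M_\alpha$ carries $F^{\ge n}_i$ into $F^{\ge n}_j$ for all $n$ — that is, when $(F^{\ge n})_n$ is a decreasing filtration of $M$ by sub-representations — and, conversely, every finite filtration of $M$ by sub-representations arises this way once vector-space splittings of the $\bF^{d_i}$ are chosen. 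Writing $\bfe^{(n)}$ for the dimension vector of $F^{\ge n}$ and $\bfd^{(n)} = \bfe^{(n)} - \bfe^{(n+1)}$ for that of the graded piece $F^{\ge n}/F^{\ge n+1}$, a short determinant computation — in which the adjustment of $\Theta$ by a multiple of $\dim$ inside $\chi_\Theta$ is exactly what makes the coefficients additive in short exact sequences — gives
\begin{equation*}
\langle\chi_\Theta,\lambda\rangle \;=\; \sum_{n} n\,\bigl(\Theta(\bfd)\dim(\bfd^{(n)}) - \dim(\bfd)\,\Theta(\bfd^{(n)})\bigr).
\end{equation*}
Summation by parts (all terms vanish for $|n|$ large) rewrites the right-hand side as $\sum_{n}\bigl(\Theta(\bfd)\dim(\bfe^{(n)}) - \dim(\bfd)\,\Theta(\bfe^{(n)})\bigr)$, a sum over the sub-representations $F^{\ge n}$ occurring in the filtration; and for a sub-representation $N$ with dimension vector $\bfe$ the quantity $\Theta(\bfd)\dim(\bfe) - \dim(\bfd)\,\Theta(\bfe)$ is $\ge 0$, $= 0$, or $> 0$ according as $\mu(N) \le \mu(M)$, $\mu(N) = \mu(M)$, or $\mu(N) < \mu(M)$, since $\dim(\bfd), \dim(\bfe) > 0$.

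Granting the above facts about the numerical criterion and the dictionary, the equivalences are then formal. If $\mu(N) \le \mu(M)$ for every non-zero proper sub-representation $N$ of $M$, then every term in the last sum is $\ge 0$, hence $\langle\chi_\Theta,\lambda\rangle \ge 0$ for all admissible $\lambda$ and $M$ is $\Theta$-semi-stable; conversely, applying semi-stability to the two-step filtration $0 \subset N \subset M$ (which is admissible once a vector-space complement of $N$ in the $\bF^{d_i}$ is chosen, and whose only non-zero proper term is $N$) yields $\mu(N) \le \mu(M)$. For stability, note that the image of $\lambda$ lies in the central torus exactly when the associated filtration is trivial, i.e. every $F^{\ge n}$ is $0$ or $M$ (such $\lambda$ contribute weight $0$ in any case); in a non-trivial admissible filtration at least one $F^{\ge n}$ is a non-zero proper sub-representation, so $\langle\chi_\Theta,\lambda\rangle > 0$ as soon as $\mu(N) < \mu(M)$ for all such $N$, while the two-step filtrations force the strict inequality conversely. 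This gives the stated characterization.

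I expect the only genuinely delicate part to be pure bookkeeping: fixing the sign in the Hilbert--Mumford weight consistently with the linearization by $\chi^{-1}$ and carrying the summation-by-parts rearrangement through without error; everything else is formal once that is in place. A minor additional point, dispatched in a sentence, is that it suffices to test sub-representations defined over $\bF$ — for the slope function $\mu$ Harder--Narasimhan filtrations exist, so a destabilizing sub-representation may be taken to be the canonical, hence Galois-stable, hence $\bF$-rational maximal destabilizing one.
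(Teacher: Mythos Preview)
The paper does not give its own proof of this theorem: it simply quotes the result from \cite{Kin94} (King), introducing it with ``The following characterization of $\Theta$-(semi-)stable points in $R(Q,\bfd)$ is also given in \cite{Kin94}.''  Your proposal is precisely King's argument: apply the Hilbert--Mumford numerical criterion to the affine $G$-variety $X$ linearized by a character, use the standard dictionary between one-parameter subgroups of $\prod_i \GL(d_i)$ with existing limit at $M$ and finite filtrations of $M$ by sub-representations, compute the Mumford weight as a sum over the filtration steps, and then test with two-step filtrations to extract the slope inequality for a single sub-representation.

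A couple of small remarks.  First, you are right that the delicate content is the \emph{converse} half of the numerical criterion in this affine-with-character setting (this is the point of \cite{Kin94}; it does not fall out of the usual projective formulation and deserves at least an explicit citation when you write it up).  Second, your sign bookkeeping is consistent with the paper's convention that the linearization is by $\chi_\Theta^{-1}$, so that $G$-invariant sections of $L_{\chi_\Theta}^n$ are $\chi_\Theta$-semi-invariants of weight $n$; the inequality $\langle\chi_\Theta,\lambda\rangle \ge 0$ is then the correct one for semi-stability.  Third, the closing remark about Galois descent of the maximal destabilizing sub-representation is a nice touch, though over a finite (hence perfect) field one could also argue more cheaply that semi-stability and stability are open conditions commuting with base change, so the $\bar\bF$-criterion already gives the $\bF$-criterion for $\bF$-rational points.
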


Next we discuss a procedure for determining {\it the instability type} 
of an unstable quiver representation, based on the above theorem. 
One starts with a given representation $M$ of $Q$. Assume that it is {\it unstable}. 
In view of the above proposition, it follows that there is some sub-representation 
$N$ of $M$ for which $\mu(N) > \mu(M)$. We let
$$
{\mathcal U} : = \{N ~\vert~ N \mbox { sub-representation of } M, \; \mu(N) > \mu(M) \}.
$$ 
Let $M^1$ denote a representation in ${\mathcal U}$ such that $\mu(M^1) \ge \mu(N)$ for all 
$N \in {\mathcal U}$ and such that $M^1$ is maximal among all such sub-representations of $M$.
Since $\mu(N) \le \mu(M^1) $ for all sub-representations of $M^1$, it is clear that $M^1$ 
is semi-stable. 

Next we consider the quotient representation $M/M^1$ and apply the above procedure to $M$ 
replaced with $M/M^1$. In case $M/M^1$ is not semi-stable and nonzero, one then obtains a 
sub-representation $\bar M^2$ of $M/M^1$ such that:

\noindent
(i) $\mu({\bar M}^2) \ge \mu({\bar N})$ for any sub-representation $\bar N$ of $M/M^1$, and 

\noindent
(ii) $\bar M^2$ is maximal among  sub-representations $\bar N$ of $M/M^1$ such that
$\mu(\bar N) >\mu (M/M^1)$. 

Let $M^2$ be the sub-representation of $M$ obtained as the inverse image of $\bar M^2$ 
under the quotient map $M \to M/M^1$. Then the choice of $M^1$ shows that $\mu(M^2) < \mu(M^1)$, 
unless $M^1=M$ or $M/M^1$ is semi-stable. Now consider the short exact sequence 
$$
0 \longrightarrow M^1 \longrightarrow  M^2 \longrightarrow M^2/M^1 \longrightarrow 0
$$ 
of  representations. \cite[Lemma 4.1]{Re08} shows that, since $\mu(M^2) <\mu(M^1)$, 
it follows that $\mu(M^2) > \mu(M^2/M^1)$. Combining this with $\mu(M^1) >\mu(M^2)$ shows that 
$\mu(M^1/M^0) = \mu(M^1) > \mu(M^2/M^1)$. Clearly both $M^1/M^0$ and $M^2/M^1$ are semi-stable.

\medskip

One may now {\it repeat the above procedure} to define a finite increasing filtration 
by sub-representations (the so-called {\it Harder-Narasimhan filtration}),
\begin{equation}\label{eqn:fil}
\{0\} =M^0 \subset M^1 \subset M^2 \subset \cdots \subset M^{n-1} \subset M^n = M,
\end{equation}
such that:

\noindent
(i) each $M^i/M^{i-1}$ is semi-stable, and 

\noindent
(ii) $\mu(M^i/M^{i-1}) > \mu (M^j/M^{j-1})$ for all $j>i$. 

Let $\bfd_i $ denote the dimension vector of the representation $M^i/M^{i-1}$. 
Varying $i=1, \cdots, n$, we obtain a sequence $(\bfd^1, \cdots, \bfd^n)$ of dimension vectors.
The sequence of slopes of the sub-quotients given by 
$$
\big( \mu(M^1/M^0),  \cdots, \mu(M^n/M^{n-1}) \big)
$$ 
together with the above sequence of dimension vectors will be called  {\it the instability type} 
of the given unstable representation $M$. Pairs of such sequences
$$
\beta := \big((\mu^1, \cdots, \mu^n), (\bfd^1, \cdots, \bfd^n)\big),$$ 
where $\mu^1 > \mu^2 > \cdots > \mu^n$ is a sequence of rational numbers and the $\bfd^i$ 
are dimension vectors with the properties that $\sum_i \bfd^i = \bfdim(M)$ 
and $\mu^i=\mu(\bfd ^i)$ will be used to index the strata of a natural stratification 
of the representation space $R(Q,\bfd)$. 
(One may observe that there is a slight redundancy in the above data: 
since we assume the character $\Theta$ is fixed once and for all, the choice of the 
dimension vectors $\bfd^i$ determines the slopes $\mu^i$ by the formula 
$\mu^i = \Theta(\bfd^i)/\dim(\bfd^i)$. Nevertheless, we keep the present notation 
for its clarity.)

\medskip
Observe that the closed orbits of $G$ in $R(Q, \bfd)^{\Theta-\sst}$ correspond to the 
$\Theta$-polystable representations, that is, the direct sums of stable 
representations of the same slope. They can also be viewed as the semi-simple 
objects in the abelian subcategory of semi-stable representations of fixed slope $\mu$. 

\medskip

By \cite[Proposition 3.7]{Re03}, the closure relations between the strata may be described 
as follows. For  $\beta = \big( (\mu^1, \cdots, \mu^n), (\bfd^1, \cdots, \bfd^n) \big)$ 
we construct the polygon with vertices, the integral points in the plane defined by 
$(\sum_{i=1}^k \dim(\bfd^i), \sum_{i=1}^k \Theta \big( \bfd^i) \big)$, $k=1, \cdots, n$. 
Then the closure of the stratum indexed by $\beta$ is contained in the union of the strata 
indexed by $\gamma = \big( ( \nu^1, \cdots, \nu^m), (\bfe^1, \cdots, \bfe^m) \big)$, 
when the polygon corresponding to $\gamma$ lies on or above the polygon corresponding to $\beta$.

\medskip

Next we proceed to describe in detail the stratum associated to an index $\beta$ as above, 
adopting the setting of GIT as in \cite{Kir84}. Accordingly the stratum corresponding
to $\beta$, to be denoted $S_{\beta}$,  will be given as follows.

\begin{enumerate}

\item
$S_{\beta}$ is the set of those $M \in X$ admitting a filtration
(\ref{eqn:fil}) such that
$$
\big( (\mu(M^1/M^0), \cdots, \mu(M^n/M^{n-1}) \big), 
\big( \bfdim(M^1/M^{0}), \cdots, \bfdim(M^n/M^{n-1}) \big) = \beta.
$$ 

\item A prescribed filtration (\ref{eqn:fil}) by sub-representations satisfying the above condition,
induces a filtration on each space $M_i$ and therefore a parabolic 
subgroup of $\GL(d_i)$. The  product of these parabolic subgroups is a 
parabolic subgroup $P_{\beta}$ of $G$.

\item We choose a $1$-parameter subgroup $\lambda_{\beta}(t)$ of $G$ 
such that the associated parabolic subgroup is exactly $P_{\beta}$;
in other words,  
$$
P_{\beta} = \{g \in \PGL(Q, \bfd) ~\vert~ 
{\underset {t \to 0} {\lim}}\lambda(t)g\lambda(t^{-1}) 
\mbox{ exists in } \PGL(Q, \bfd) \}.
$$

\item $Y_{\beta}$ is the set of those representations $M \in R(Q,\bfd)$ provided with a chosen filtration
by subrepresentations $M^i$  satisfying the condition that $\mu(M^i/M^{i-1}) > \mu(M^j/M^{j-1})$ for all $j>i$.
(i.e. we do not require the quotients $M^i/M^{i-1}$ to be semi-stable.) $Y^{\sst}_{\beta}$ will denote the
subset of those representations $M$ so that the successive quotients $M^i/M^{i-1}$ of the chosen filtration
are also semi-stable.

\item $Z_{\beta}$ is the set of associated graded modules,
$\gr(M) = \oplus_{i=1}^n M^i/M^{i-1}$, where $M \in Y_{\beta}$. Similarly $Z^{\sst}_{\beta}$ is the set of
associated  graded modules, $\gr(M)$, where $M \in Y^{\sst}_{\beta}$.
\end{enumerate}

\begin{proposition} \label{equiv.perf.0}
{\rm (i)} 
$Z_{\beta} = \{ {\underset {t \to 0} {\lim}} \lambda_{\beta}(t) \cdot M ~|~ M \in Y_{\beta}\}$. 
Similarly, 
$Z^{\sst}_{\beta}= \{ {\underset {t \to 0} {\lim}} \lambda_{\beta}(t) \cdot M ~|~
M \in Y^{\sst}_{\beta}\}$.
\medskip

\noindent
{\rm (ii)} $Z^{\sst}_{\beta}$ is open in the fixed point locus for the action of 
$\lambda_{\beta}(t)$ on $X$.
\end{proposition}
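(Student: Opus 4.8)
The plan is to prove both parts by relating the sets $Z_\beta$, $Z_\beta^{\sst}$ to the $\lambda_\beta(t)$-fixed point locus via the standard GIT/Bia\l ynicki-Birula picture adapted to quiver representations. For part (i), the key observation is that a representation $M \in R(Q,\bfd)$ equipped with a filtration $\{0\} = M^0 \subset M^1 \subset \cdots \subset M^n = M$ of the prescribed type is, in coordinates, an element of the parabolic subspace $\mathfrak{p}_\beta \subset X$ attached to $P_\beta$, and the one-parameter subgroup $\lambda_\beta(t)$ acts on $X$ with nonnegative weights precisely on $\mathfrak{p}_\beta$. First I would make explicit the weight decomposition: writing each $M_i = \bigoplus_k (M^k_i/M^{k-1}_i)$ according to the chosen filtration, the torus $\lambda_\beta(t)$ acts on the block $\Hom(M_i, M_j)$ component of $X$ connecting the $k$-th graded piece at $i$ to the $\ell$-th graded piece at $j$ by a power of $t$ that is $\ge 0$ exactly when the arrow-component preserves or lowers the filtration, and is $0$ exactly on the block-diagonal part, which is $Z_\beta$. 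Hence ${\lim}_{t\to 0}\lambda_\beta(t)\cdot M$ exists for every $M \in Y_\beta$ (since $Y_\beta \subset \mathfrak{p}_\beta$) and equals the associated graded $\gr(M) = \bigoplus_i M^i/M^{i-1}$, which is exactly the definition of $Z_\beta$. Conversely every element of $Z_\beta$ is such a limit since it is itself a graded module sitting in $Y_\beta$ and is fixed by $\lambda_\beta(t)$. The same argument applies verbatim with $Y_\beta^{\sst}$ in place of $Y_\beta$, using that semistability of the successive quotients $M^i/M^{i-1}$ is preserved under passing to the associated graded (indeed $\gr(M)$ has the same successive quotients). This gives part (i).

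For part (ii), I would first identify the full fixed point locus $X^{\lambda_\beta(t)}$: by the weight computation above, $X^{\lambda_\beta(t)}$ consists of those $M$ supported on the block-diagonal, i.e. $M \cong \bigoplus_{k=1}^n N^k$ where $N^k$ has dimension vector $\bfd^k$, together with the arrows respecting the direct sum decomposition — so $X^{\lambda_\beta(t)}$ is a product $\prod_k R(Q,\bfd^k)$ (up to the choice of splitting, which is rigidified by $\lambda_\beta$). Under this identification, $Z_\beta^{\sst}$ is the subset of tuples $(N^1,\dots,N^n)$ such that each $N^k$ is $\Theta$-semistable of slope $\mu^k$. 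Now I would invoke King's characterization (the theorem just before Proposition~\ref{prop:king} in the excerpt): the semistable locus $R(Q,\bfd^k)^{\Theta-\sst}$ is open in $R(Q,\bfd^k)$, being the locus where some $\Theta$-semi-invariant of positive weight is nonzero. Therefore $Z_\beta^{\sst} = \prod_k R(Q,\bfd^k)^{\Theta-\sst}$ is open in $\prod_k R(Q,\bfd^k) = X^{\lambda_\beta(t)}$, which is the assertion.

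The step I expect to be the main obstacle is pinning down part (i) cleanly — specifically, being careful that the limit ${\lim}_{t\to 0}\lambda_\beta(t)\cdot M$ is literally the associated graded module (not just an isomorphic one) and that the choice of one-parameter subgroup $\lambda_\beta$, which is only determined up to the center of $P_\beta$ and positive scaling, does not affect which blocks are the weight-zero blocks. One should check that among the one-parameter subgroups $\lambda$ with $P(\lambda) = P_\beta$, those that additionally satisfy the numerical condition coming from the slopes $\mu^1 > \cdots > \mu^n$ (so that the weight ordering of the graded pieces matches the filtration ordering) all have the same weight-zero locus, namely the block-diagonal. This is a routine but slightly delicate bookkeeping point with the weights; everything else — the openness in part (ii) via King, the identification of the fixed locus — follows formally from the linearity of the $G$-action on $X$ and the results already available in the excerpt.
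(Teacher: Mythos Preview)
Your proposal is correct and follows essentially the same route as the paper. The paper's proof is terse: for (i) it simply cites \cite[Section 3]{Kin94} as ``standard,'' and for (ii) it observes that the fixed locus consists of graded modules and that semistability of each graded piece is an open condition (phrased there as semistability with respect to the Levi $L_\beta$ and the restricted character, which is exactly your product decomposition $\prod_k R(Q,\bfd^k)^{\Theta-\sst}$). Your explicit weight computation for (i) is precisely the content behind the citation to King, and your worry about the choice of $\lambda_\beta$ is harmless: any $\lambda$ with $P(\lambda)=P_\beta$ has centralizer $L_\beta$, so the weight-zero locus is the block-diagonal independently of the particular choice.
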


\begin{proof} 
(i) is standard, see \cite[Section 3]{Kin94}. 

(ii) Clearly, the fixed point locus is exactly the set of associated graded modules. 
Moreover, the semi-stability of each $M^i/M^{i-1}$ is an open condition 
(it amounts to semi-stability with respect to the centralizer $L_{\beta}$ of 
$\lambda_{\beta}$, a Levi subgroup of $P_{\beta}$, and to the restriction of
$\Theta$ to $L_{\beta}$).
\end{proof}

\begin{theorem}
\label{thm:main4}
The stratification of $X$ defined above is equivariantly perfect.
\end{theorem}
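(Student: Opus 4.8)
The plan is to verify the Atiyah--Bott criterion (see \cite[1.4]{AB83}): the stratification $(S_\beta)$ of $X = R(Q,\bfd)$ is equivariantly perfect provided that, for each stratum $S_\beta$ of positive codimension, the equivariant Euler class of the normal bundle $N_\beta$ of $S_\beta$ in $X$ is not a zero divisor in $H^*_G(S_\beta,\bQ_l)$. Since $X$ is affine rather than projective, we cannot cite Kirwan's theorem directly, but the structure of the strata described in items (1)--(5) above is exactly parallel to the projective case, so the same reduction used in the proof of Corollary~\ref{cor:git} should apply. Concretely, I would first record the identification
\[
S_\beta \;=\; G \underset{P_\beta}{\times} Y_\beta^{\sst},
\]
where $P_\beta$ is the parabolic attached to the filtration type $\beta$ and $Y_\beta^{\sst}$ is the (locally closed, $P_\beta$-stable) set of representations carrying a chosen filtration of type $\beta$ with semi-stable subquotients. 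This yields $H^*_G(S_\beta,\bQ_l)\cong H^*_{P_\beta}(Y_\beta^{\sst},\bQ_l)\cong H^*_{L_\beta}(Y_\beta^{\sst},\bQ_l)$, where $L_\beta$ is the Levi (the centralizer of $\lambda_\beta$); the second isomorphism holds because $P_\beta/L_\beta$ is an affine space. Then, using that the retraction $Y_\beta^{\sst}\to Z_\beta^{\sst}$, $M\mapsto \lim_{t\to0}\lambda_\beta(t)\cdot M$, is a fibration in affine spaces (Proposition~\ref{equiv.perf.0}(i) and the standard Bia{\l}ynicki-Birula picture), we get a further isomorphism $H^*_{L_\beta}(Y_\beta^{\sst},\bQ_l)\cong H^*_{L_\beta}(Z_\beta^{\sst},\bQ_l)$, and under this chain of isomorphisms the equivariant Euler class of $N_\beta$ is carried to that of the restriction $N_\beta|_{Z_\beta^{\sst}}$.

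The next step is the normal bundle analysis. Along $Z_\beta^{\sst}$, which by Proposition~\ref{equiv.perf.0}(ii) is open in the fixed locus $X^{\lambda_\beta}$, the tangent space $T_MX$ decomposes according to the weights of $\lambda_\beta$; the normal directions to $S_\beta$ correspond exactly to the strictly negative weight spaces (the ``$\mu^i>\mu^j$ for $i<j$'' inequalities guaranteeing that no zero weights intrude). Thus $N_\beta|_{Z_\beta^{\sst}}$ is a sub/quotient bundle of the normal bundle $N_\beta'$ to $Z_\beta^{\sst}$ in $X$ on which $\lambda_\beta$, and hence a suitable subtorus $T_\beta$ of the center of $L_\beta$, acts without non-zero fixed vectors in any fiber. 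Now invoke Lemma~\ref{lem:ab}: the equivariant Euler class of any $L_\beta$-linearized vector bundle on the $T_\beta$-fixed variety $Z_\beta^{\sst}$ on which $T_\beta$ acts fibrewise with only non-zero characters is a non-zero divisor in $H^*_{L_\beta}(Z_\beta^{\sst},\bQ_l)$. Applying this to $N_\beta'$ and then passing to the quotient bundle $N_\beta|_{Z_\beta^{\sst}}$ (a quotient of a bundle with non-zero-divisor Euler class, with complementary factor again having non-zero-divisor Euler class, still has non-zero-divisor Euler class) gives what we need, and transporting back through the isomorphisms of the previous paragraph yields that $e_G(N_\beta)$ is a non-zero divisor in $H^*_G(S_\beta,\bQ_l)$.

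The main obstacle I anticipate is purely bookkeeping rather than conceptual: one must check carefully that for the affine representation space the strata $S_\beta$ genuinely have the induced-bundle form $G\times_{P_\beta}Y_\beta^{\sst}$ with $Y_\beta^{\sst}$ behaving as in Kirwan's projective setup --- in particular that the Bia{\l}ynicki-Birula-type retraction $Y_\beta \to Z_\beta$ is well defined (all the relevant limits $\lim_{t\to 0}\lambda_\beta(t)\cdot M$ exist, which is where the slope inequalities $\mu^1>\cdots>\mu^n$ enter) and is a Zariski-locally-trivial affine-space fibration, and that $Z_\beta^{\sst}$ is smooth and open in the fixed locus. Once these geometric facts are in place --- and they follow from the description of the Harder--Narasimhan stratification recalled above together with \cite[Section 3]{Kin94} --- the cohomological argument is a direct transcription of \cite[Part I]{Kir84} and of the proof of Corollary~\ref{cor:git}, with singular cohomology replaced throughout by $l$-adic cohomology. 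I would close by remarking that the equivariant perfection is equivalent to the statement that the Thom--Gysin long exact sequences for the closed stratification break into short exact sequences, which is the form in which it will be used (via Theorem~\ref{thm:main3}) to deduce strong purity of the quiver moduli space.
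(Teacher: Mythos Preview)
Your proposal is correct and follows essentially the same route as the paper: verify the Atiyah--Bott criterion by reducing via $S_\beta \cong G\times_{P_\beta}Y_\beta^{\sst}$ and the affine retraction $Y_\beta^{\sst}\to Z_\beta^{\sst}$ to the $L_\beta$-equivariant cohomology of $Z_\beta^{\sst}$, then invoke Proposition~\ref{equiv.perf.0}(ii) and Lemma~\ref{lem:ab} exactly as in the proof of Corollary~\ref{cor:git}. The paper's proof is just a terse two-sentence version of what you have spelled out, pointing back to Corollary~\ref{cor:git} for the details you have written explicitly.
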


\begin{proof} Since $X$ and all strata are clearly smooth, it suffices to show 
that the equivariant Euler classes of the normal bundles to the strata $S_{\beta}$ 
are non-zero divisors. But this follows from Proposition \ref{equiv.perf.0}
by arguing as in the proof of Corollary \ref{cor:git}. Specifically, since 
$Z_{\beta}^{\sst}$ is open in the fixed locus $X^{\lambda_{\beta}}$, it is a locally
closed smooth subvariety of $X$, and $\lambda_{\beta}$ has no non-zero fixed point
in the normal spaces to $Z_{\beta}^{\sst}$ in $X$. By Lemma \ref{lem:ab}, it follows that
the Euler class of the normal bundle to $Z_{\beta}^{\sst}$ in $X$ is not a zero divisor. 
\end{proof}

\begin{remark} 
It is important to observe that this result does not follow readily from the theory discussed
in \cite{Kir84}, since one of the key assumptions there is that the variety $X$ be {\it projective}.
The projectivity was needed there, however, only to make sure that the limits as considered 
in the definition of $Z_{\beta}$ exist. Here we prove the existence of these limits directly.
\end{remark}

\begin{corollary} 
\label{cor.quiver.moduli}
Assume in addition to the above situation that each semi-stable point is stable.
Then the $l$-adic cohomology $H^*\big( M^{\Theta-\st}(Q, \bfd),\bQ_l \big)$ 
vanishes in all odd degrees. Moreover, $F$ acts on each 
$H^{2n}\big( M^{\Theta-\st}(Q, \bfd),\bQ_l \big)$ via multiplication by $q^n$. 

In particular, $H^*\big( M^{\Theta-\st}(Q, \bfd),\bQ_l \big)$ is strongly pure,
and hence the number of $\bF_{q^n}$-rational points of $M^{\Theta-\st}(Q, \bfd)$
is a polynomial function of $q^n$ with integer coefficients.
\end{corollary}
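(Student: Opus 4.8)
The plan is to apply Theorem~\ref{thm:main2}, and then to refine its proof, in the case $X = R(Q,\bfd)$, $G = PG(Q,\bfd)$, with the $G$-linearized line bundle ${\rm L}_{\chi_\Theta}$. First I would check that the hypotheses of Theorem~\ref{thm:main2} hold here. The group $PG(Q,\bfd) = \big(\prod_{i\in I}\GL(d_i)\big)/\bG_m$ is connected reductive, and in fact split, being the quotient of a split group by a split central torus; the variety $X$ is a smooth affine space, so that the trivial line bundle on $X$ --- hence ${\rm L}_{\chi_\Theta}$ --- is ample, and it is $G$-linearized by construction. Assumption (a) of Theorem~\ref{thm:main2} is precisely the hypothesis that each semi-stable point is stable, and assumption (b), that $X$ carries an equivariantly perfect stratification with open stratum $X^{\sst} = X^{\st}$, is exactly Theorem~\ref{thm:main4} together with (a). Since $X = \bA^N$ is trivially strongly pure, Theorem~\ref{thm:main2} already yields that $X/\!/G = M^{\Theta-\sst}(Q,\bfd) = M^{\Theta-\st}(Q,\bfd)$ is strongly pure.

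To obtain the sharper statements --- vanishing in odd degrees and the precise $F$-action --- I would follow the proof of Theorem~\ref{thm:main2} in this special case. Because $X$ is an affine space, $H^t(X,\bQ_l) = 0$ for $t > 0$, so the Leray spectral sequence for the projection $\EGx X \to BG$ (whose $E_2$-term is $H^*(BG,\bQ_l) \otimes H^*(X,\bQ_l)$, since $G$ is connected) collapses and gives $H^*(\EGx X,\bQ_l) \cong H^*(BG,\bQ_l)$. The equivariant perfection of the stratification (Theorem~\ref{thm:main4}) makes the associated Thom--Gysin sequences split, so that the restriction $H^*(\EGx X,\bQ_l) \to H^*(\EGx X^{\sst},\bQ_l)$ is surjective, exactly as in the proof of Theorem~\ref{thm:main2}. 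Moreover, since every semi-stable point is stable, its stabilizer is finite, hence trivial (the stabilizers being smooth and connected); therefore $R(Q,\bfd)^{\Theta-\st} \to M^{\Theta-\st}(Q,\bfd)$ is a $PG(Q,\bfd)$-torsor by Proposition~\ref{prop:king}, and $H^*(\EGx X^{\sst},\bQ_l) \cong H^*(M^{\Theta-\st}(Q,\bfd),\bQ_l)$ as in~\eqref{eqn:coh.geom.quot}. Composing, $H^*(M^{\Theta-\st}(Q,\bfd),\bQ_l)$ is a degree-preserving, Frobenius-equivariant quotient of $H^*(BG,\bQ_l)$. Since $G$ is split, Theorem~\ref{thm:BG} shows that $H^*(BG,\bQ_l)$ vanishes in odd degrees and that $F$ acts on $H^{2n}(BG,\bQ_l)$ by multiplication by $q^n$; both properties pass to quotients, which gives the first two assertions of the corollary, and in particular strong purity.

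For the last assertion, I would observe that $M^{\Theta-\st}(Q,\bfd)$ is smooth, being the base of the $PG(Q,\bfd)$-torsor above; Poincar\'e duality (Remark~\ref{rem:wsp}(iv)) then transfers the vanishing and the eigenvalue information to $H^*_c$, and the trace formula~\eqref{eqn:trace} gives $|M^{\Theta-\st}(Q,\bfd)(\bF_{q^n})| = \sum_j \dim_{\bQ_l} H^{2j}_c(M^{\Theta-\st}(Q,\bfd),\bQ_l)\, q^{nj}$, a polynomial in $q^n$ with non-negative integer coefficients. There is no genuine obstacle left once Theorem~\ref{thm:main4} is available; the points needing care are that ${\rm L}_{\chi_\Theta}$ counts as an ample $G$-linearized line bundle even though $X$ is affine (ampleness being automatic on an affine scheme), and that one must reprove Theorem~\ref{thm:main2} in this setting --- rather than merely quote it --- in order to upgrade ``strongly pure'' to the sharp form, which succeeds precisely because $X$ has the $l$-adic cohomology of a point and $G$ is split.
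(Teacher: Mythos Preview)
Your proposal is correct and follows essentially the same route as the paper: both use Theorem~\ref{thm:main4} to get the surjection $H^*(\EGx X,\bQ_l)\twoheadrightarrow H^*(\EGx X^{\sst},\bQ_l)$, identify the source with $H^*(BG,\bQ_l)$ via the fact that $X$ is an affine space, identify the target with $H^*(M^{\Theta-\st}(Q,\bfd),\bQ_l)$ using Proposition~\ref{prop:king}, and then conclude by Theorem~\ref{thm:BG}. Your preliminary invocation of Theorem~\ref{thm:main2} is harmless but redundant, since the refined argument you give afterwards already delivers strong purity along with the sharper statements.
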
 

\begin{proof}
We adapt the argument of the proof of Theorem \ref{thm:main2}.
By Theorem \ref{thm:main4}, the pull-back map
$$
H^*(\EGx X, \bQ_l) \longrightarrow H^*(\EGx X^{\sst}, \bQ_l) 
$$
is surjective. The right-hand side is isomorphic to 
$H^*\big( M^{\Theta-\st}(Q, \bfd), \bQ_l \big)$
by our assumption and Proposition \ref{prop:king}. 
On the other hand, since $X$ is an affine space, the left-hand side 
is isomorphic to $H^*(BG, \bQ_l)$. Since $G$ is split, the assertions 
follow from Theorem \ref{thm:BG}.
\end{proof} 

\begin{remarks} 
{\rm (i)} Suppose $\bfd$ is co-prime for $\Theta$, i.e. $\mu(\bfe) \ne \mu(\bfd)$ for all 
$0 \ne \bfe < \bfd$. (For a generic choice of $\Theta$, this is equivalent to  
${\rm g.c.d}\{d_i|i \in I\} = 1$.)
In this case, every semi-stable point is stable.

\medskip 

{\rm (ii)} The above corollary also recovers certain results of Reineke 
(see \cite[Section 6]{Re03} and \cite[Theorem 6.2]{Re06}) which are established 
by using the combinatorics of the Hall algebra associated to the quiver. 
Our proof is purely based on geometric invariant theory coupled with the theory 
of weak and strong purity developed in the first section of this paper.
\end{remarks}


\begin{thebibliography}{100} 


\bibitem[AB83]{AB83}
M. F. Atiyah and R. Bott,
\emph{The Yang-Mills equations over Riemann surfaces},
Philos. Trans. Roy. Soc. London Ser. A \textbf{308} (1983), no. 1505, 523–-615. 


\bibitem[Ar69]{Ar69} 
M. Artin, 
\emph{On Azumaya algebras and finite-dimensional representations of rings},
J. Algebra \textbf{11} (1969), 523--563.


\bibitem[BBD81]{BBD81}
A. Beilinson, J. Bernstein and P. Deligne, 
\emph{Faisceaux pervers}, Ast\'erisque, \textbf{100}, 
Soc. Math. de France, 1981.


\bibitem[BK72]{BK72} A. K. Bousfield and D. M. Kan, 
\emph{Homotopy limits, completions and localizations},
Lecture Notes in Mathematics, \textbf{304}, 
Springer-Verlag, Berlin-New York, 1972.


\bibitem[BP10]{BP10}
M. Brion and E. Peyre, 
\emph{Counting points of homogeneous varieties over finite fields}, 
J. Reine Angew. Math. \textbf{645} (2010), 105����-124.


\bibitem[CE56]{CE56} 
H. Cartan and S. Eilenberg, 
\emph{Homological Algebra}, 
Princeton Math Series, Princeton University Press, 1956. 


\bibitem[De68]{De68} 
P.~Deligne,
\emph{Th\'eor\`eme de Lefschetz et crit\`eres de d\'eg\'enerescence 
de suites spectrales},
Publ. Math. IHES {\bf 35} (1968), 259--278.


\bibitem[De74a]{De74a} P.~Deligne, 
\emph{La conjecture de Weil I},
Publ. Math. IHES \textbf{43} (1974), 273--307. 


\bibitem[De74b]{De74b} 
P.~Deligne,
\emph{Th\'eorie de Hodge III}, 
Publ. Math. IHES \textbf{44} (1974), 5--77 .


\bibitem[De77]{De77} 
P.~Deligne, 
\emph{Cohomologie \'etale}, 
S\'eminaire de G\'eom\'etrie Alg\'ebrique du Bois-Marie,
avec la collaboration de J. F. Boutot, A. Grothendieck,
L. Illusie et J. L. Verdier, 
Lecture Notes in Math. \textbf{569}, 
Springer-Verlag, Berlin-New York, 1977. 


\bibitem[De80]{De80} 
P.~Deligne, 
\emph{La conjecture de Weil II},
Publ. Math. IHES  \textbf{52} (1980), 137--252. 


\bibitem[Do03]{Do03}
I.~Dolgachev,
\emph{Lectures on Invariant Theory},
London Math. Soc. Lecture Note Series \textbf{296},
Cambridge Univ. Press, 2003.


%\bibitem[Fr72]{Fr72} 
%E. Friedlander, 
%\emph{Fibrations in \'etale homotopy theory},
%Publ. Math. IHES, (1972), \textbf{42}, 5-42.


\bibitem[Fr73]{Fr73} E. Friedlander, 
\emph{The \'etale homotopy theory of a geometric fibration},
Manuscr. Math. \textbf{10} (1973), 209--244.


\bibitem[Fr83]{Fr83} 
E. Friedlander, \emph{\'Etale homotopy of simplicial schemes}, 
Ann. Math Studies \textbf{104}, Princeton University Press, 1983.


\bibitem[Gr58]{Gr58}
A. Grothendieck,
\emph{Torsion homologique et sections rationnelles},
Expos\'e No. 5 au S\'eminaire Claude Chevalley \textbf{3},
Anneaux de Chow et applications, 1958.


%\bibitem[Hess]{hess} W. H. Hesselink, 
%\emph{Uniform instability in invariant theory},
%J. Reine. Ang. Math, \textbf{303}, (1978), 75-96.


\bibitem[HK06]{HK06} 
A. Huber and B. Kahn, 
\emph{The slice filtration and mixed Tate motives},
Compositio Math. \textbf{142} (2006), 907--936.


\bibitem[Hu94]{Hu94} 
D. Husemoller, 
\emph{Fibre bundles}, 
Graduate Texts in Mathematics \textbf{20}, $2^{nd}$ edition, 
Springer-Verlag, Berlin-New York, 1994.


%\bibitem[Is05]{Is05} D. Isaksen, \emph{ Completions of pro-spaces},
% Math. Zeitschrift, \textbf{250}, (2005), no. 1, 113--143. 


\bibitem[Jo93]{Jo93} 
R. Joshua, 
\emph{The derived category and intersection cohomology of algebraic stacks}, 
in: Algebraic K-theory and Algebraic Topology, NATO ASI Series, 
Series C: Mathematical and Physical Sciences, 
\textbf{407} (1993), 91--145. 


\bibitem[Jo01]{Jo01} 
R. Joshua, 
\emph{Algebraic K-theory and Higher Chow groups of linear varieties},
Math. Proc. Cambridge. Phil. Soc. \textbf{130} (2001), no. 1, 37--60.


\bibitem[Jo02]{Jo02} 
R. Joshua, 
\emph{Derived functors for maps of simplicial spaces}, 
Journal of Pure and Applied Algebra
\textbf{171} (2002), no. 2-3, 219--248.


\bibitem[Kin94]{Kin94} 
A. D. King, 
\emph{Moduli of representations of finite dimensional algebras}, 
Quart. J. Math. Oxford, \textbf{(2)}, 
\textbf{45} (1994), 515--530.


\bibitem[Kir84]{Kir84} 
F. Kirwan, 
\emph{Cohomology of quotients in symplectic and  algebraic geometry}, 
Princeton Lecture Notes, Princeton, 1984.


%\bibitem[Lu75]{Lu75} 
%D. Luna, 
%\emph{Adh\'erences d'orbite et invariants},
%Invent. Math. \textbf{29} (1975), no. 3, 231--238.


\bibitem[Mi80]{Mi80} 
J.~S.~Milne,
\emph{\'Etale cohomology},
Princeton University Press, Princeton, 1980.


\bibitem[MFK94]{MFK94} 
D. Mumford, J. Fogarty and F. Kirwan, 
\emph{Geometric Invariant Theory. Third Edition}, 
Ergeb. der Math., Springer-Verlag, Berlin-New York, 1994.


%\bibitem[Ne78]{Ne78} P. Newstead, 
%\emph{Introduction to moduli problems and orbit spaces}, 
%Tata Institute Lecture Notes, TIFR, Bombay, (1978)


\bibitem[Ra70]{Ra70} M. Raynaud,
\emph{Faisceaux amples sur les sch\'emas en groupes et 
les espaces homog\`enes},
Lecture Notes in Mathematics \textbf{119}, 
Springer-Verlag, Berlin-New York, 1970.


\bibitem[Re03]{Re03} M. Reineke, 
\emph{The Harder-Narasimhan system in quantum groups 
and cohomology of quiver moduli}, 
Invent. Math. \textbf{152} (2003), 349--368.


\bibitem[Re06]{Re06} 
M. Reineke, 
\emph{Counting rational points of quiver moduli}, 
Int. Math. Res. Not. 2006.


\bibitem[Re08]{Re08} 
M. Reineke,
\emph{Moduli of representations of quivers}, in:
Trends in representation theory of algebras and related topics,  
589--637, EMS Ser. Congr. Rep., Eur. Math. Soc., Z\"urich, 2008.


%\bibitem[Ri82]{Ri82} 
%R. W. Richardson,
%\emph{On orbits of algebraic groups and Lie groups},
%Bull. Austral. Math. Soc. \textbf{25} (1982), no. 1, 1--28.


%\bibitem[Sp98]{Sp98} 
%T. A. Springer,
%\emph{Linear algebraic groups. Second edition,} 
%Progress in Math. {\bf 9}, Birkh\"auser, Boston, 1998.


\bibitem[SGA1]{SGA1} 
A. Grothendieck,   
\emph{Rev\^etements \'etales et groupe fondamental},  
Lecture Notes in Mathematics \textbf{224},
Springer-Verlag, Berlin-New York, 1971.


\bibitem[Wa78]{Wa78} F. Waldhausen, 
\emph{Algebraic K-theory of generalized free products, Part I},
Annals. Math. \textbf{108}, no. 2, (1978), 135--204.


%\bibitem[St68]{St68} R.~Steinberg,
%\emph{Endomorphisms of linear algebraic groups},
%Memoirs Amer. Math. Soc. {\bf 80}, AMS, Providence, 1968.

\end{thebibliography}
\end{document}